\theoremstyle{plain}
\newtheorem{theorem}{Theorem}[section]
\newtheorem{proposition}[theorem]{Proposition}
\newtheorem{corollary}[theorem]{Corollary}
\newtheorem{lemma}[theorem]{Lemma}
\newtheorem*{decomp}{The Decomposition Theorem}
\newtheorem*{theoremA}{The Main Theorem}
\newtheorem*{Theorem}{Theorem}
\theoremstyle{definition}
\newtheorem{definition}[theorem]{Definition}
\newtheorem{example}[theorem]{Example}
\newtheorem{remark}[theorem]{Remark}
\newtheorem*{remarks*}{Remarks}
\newcommand{\ut}{{>T}}
\newcommand{\cb}{\mathcal {B}}
\newcommand{\cac}{\mathcal {C}}
\newcommand{\cf}{\mathcal {F}}
\newcommand{\ch}{\mathcal {H}}
\newcommand{\ci}{\mathcal {I}}
\newcommand{\cn}{\mathcal {N}}
\newcommand{\cp}{\mathcal {P}}
\newcommand{\cs}{\mathcal {S}}
\newcommand{\ct}{\mathcal {T}}
\newcommand{\cu}{\mathcal {U}}
\newcommand{\sphere}{{\mathbb S}}
\newcommand{\rr}{{\mathbf R}}
\newcommand{\zz}{{\mathbf Z}}
\newcommand{\minus}{{-1}}
\newcommand{\fin}{{\mathrm {fin}}}
\newcommand{\hA}{{\widehat A}}
\newcommand{\bs}{{\mathbf s}}
\newcommand{\wt}{\widetilde}
\newcommand{\ol}{\overline}
\newcommand{\ha}{{\hat{A}}}
\newcommand{\ga}{\alpha}
\newcommand{\gd}{\delta}
\newcommand{\geps}{\varepsilon}
\newcommand{\gf}{\varphi}
\newcommand{\gi}{\iota}
\newcommand{\gr}{\rho}
\newcommand{\gs}{\sigma}
\newcommand{\gt}{\tau}
\newcommand{\gG}{\Gamma}
\newcommand{\gD}{\Delta}
\newcommand{\gdo}{\Delta^{op}}
\newcommand{\pgdo}{\partial\Delta^{op}}
\newcommand{\gU}{\Upsilon}
\newcommand{\In}{\operatorname{In}}
\newcommand{\Aut}{\operatorname{Aut}}
\newcommand{\Card}{\operatorname{Card}}
\newcommand{\Lk}{\operatorname{Lk}}
\newcommand{\plk}{\operatorname{PLk}}
\newcommand{\cohd}{\operatorname{cd}}
\newcommand{\vcd}{\operatorname{vcd}}
\newcommand{\cat}{\operatorname{CAT}}
\newcommand{\flag}{\operatorname{Flag}}
\newcommand\mapright[1]{\smash{\mathop{\longrightarrow}\limits^{#1}}}
\newenvironment{enumeratei}{\begin{enumerate}[\upshape (i)]}%
        {\end{enumerate}}
\newenvironment{enumerate1}{\begin{enumerate}[\upshape 1)]}%
        {\end{enumerate}}
\newenvironment{enumeratea}{\begin{enumerate}[\upshape 
(a)]}{\end{enumerate}}
\newenvironment{enumeratea'}{\begin{enumerate}[\upshape 
(a)$'$]}{\end{enumerate}}
\newcommand{\comment}[1]{}
\DeclareMathOperator{\V}{\bf V}
\providecommand{\bysame}{\makebox[3em]{\hrulefill}\thinspace,\ }
\numberwithin{equation}{section}
\begin{document}
\title{Compactly supported cohomology of buildings} 
\author{M.W. Davis\thanks{The first author was partially supported by NSF 
grant DMS 0706259.}
\and
J. Dymara\thanks{The second author was partially supported by KBN grant 
N201 012 32/0718}  
\and
T. Januszkiewicz\thanks{The third 
author also was partially supported by NSF grant DMS 0706259.}
\and
J. Meier\thanks{The fourth author was partially supported by a Richard King Mellon research grant.}
\and
B. Okun}

\date{}
\maketitle
\begin{abstract}
We compute the compactly supported cohomology of the standard realization of any locally finite building.

\smallskip

\noindent
\textbf{AMS classification numbers}. Primary: 20F65 \\
Secondary: 20E42, 20F55, 20J06, 57M07.
\smallskip

\noindent
\textbf{Keywords}:  Building, cohomology of groups, Coxeter group.
\end{abstract}

\section*{Introduction}
A building consists of a set $\Phi$ (the elements  of which are called ``chambers'') together with a family of equivalence relations (``adjacency relations'') on $\Phi$ indexed by a set $S$ and a ``$W$-valued distance function,'' $\Phi\times \Phi \to W$, where $W$ is a Coxeter group with fundamental set of generators $S$.  So, associated to any building  there is a Coxeter system $(W,S)$, its \emph{type}.   


There is a construction which associates a topological space to $\Phi$.  This construction admits some freedom of choice.  The idea is to choose a space $X$ as a ``model chamber'' and then glue together copies of it, one for each element of $\Phi$.  To do this, it  is first necessary to choose a family of closed subspaces $\{X_s\}_{s\in S}$ so that copies of $X$ corresponding to $s$-adjacent chambers are glued together along $X_s$. (We call such a family, $\{X_s\}_{s\in S}$, a ``mirror structure'' on $X$.)  Let $\cu(\Phi,X)$  denote the topological realization of $\Phi$ where each chamber is  realized by a copy of the model chamber $X$. (Details are given in Section~\ref{s:geom}.)

Classically, interest  has centered on buildings of spherical or affine type, meaning that $W$ is a spherical or Euclidean reflection group, respectively.  For example, each algebraic group over a local field has a corresponding affine building.  However, we are mainly interested in buildings which are not classical in that their associated Coxeter systems are neither spherical nor affine.  This is a large class of spaces, many of which have a great deal of symmetry.  For example, such buildings arise in the theory of Kac-Moody groups (e.g., see \cite{cr, remy,rr}).  Also,  nonclassical buildings associated to arbitrary right-angled Coxeter groups have been a subject of recent interest in geometric group theory (e.g., see \cite{dbuild,js,  thomas}).

Two choices for a model chamber $X$ stand out.  The first is $X=\gD$, a simplex of dimension $\Card(S)-1$, with its codimension one faces indexed by  $S$.  This was Tits' original choice (cf.~\cite{ab}).  We call $\cu(\Phi, \gD)$ the ``classical realization'' of $\Phi$.  The other choice for $X$ is the ``Davis chamber'' $K$, defined as the geometric realization of the poset $\cs$ of spherical subsets of $S$ (see \cite[Chapters 7, 18]{dbook}).  $\cu(\Phi, K)$ is the ``standard realization'' of $\Phi$.  Both realizations are contractible.  The standard realization is  important in geometric group theory, the reason being that in this field one is interested in discrete group actions which are both proper and cocompact and these conditions are more likely to hold for the action of a  group of automorphisms on the standard realization than on the classical realization.  (If $\Phi$ has finite thickness, $\cu(\Phi,K)$ is locally finite, while $\cu(\Phi,\gD)$ need not be.)  

If a discrete group $\gG$ acts properly and cocompactly on a locally finite, contractible CW complex $Y$, then the compactly supported cohomology  of $Y$ is isomorphic  to the cohomology of $\gG$ with $\zz \gG$ coefficients.  In particular, it determines the virtual cohomological dimension of $\gG$, as well as, the number of ends of $\gG$, and it determines if $\gG$ is a duality group.  (For more information, see \cite[Part IV]{g}.) 
However, as we will explain, even if one is only interested in cohomological computations in the case of the standard realization of $\Phi$, it is necessary to carry out similar computations for various other realizations, in particular, for the classical realization.

In the classical case of an (irreducible) affine building, the two notions of model chamber agree:  $\gD=K$.  So, in the affine case the study of the  cohomology of cocompact lattices in $\Aut (\Phi)$ is closely tied to the study of the cohomological properties of $\cu(\Phi,\gD)$.  For example, in \cite{bs} Borel and Serre calculated the compactly supported cohomology, $H^*_c(\cu(\Phi, \gD))$, for any (irreducible) affine building and then used this calculation to derive information about the cohomology of ``$S$-arithmetic'' subgroups. The calculation of \cite{bs} was that $H^*_c(\cu(\Phi, \gD))$ is concentrated in the top degree ($=\dim\gD$) and is free abelian in that degree. 

Our main result, Corollary~\ref{c:main},  is a calculation of $H^*_c(\cu(\Phi, K))$, generalizing the theorem of Borel-Serre. In the case where $\Phi=W$, this was done in \cite{d98,ddjo2,dm}.  For a general (thick) building, in the case where $(W,S)$ is right-angled, it was done in \cite[Theorem 6.6]{ddjo2}.  It was claimed in full generality in \cite{dm}; however, there is a mistake in the proof (see \cite{dm-erratum}).

In order to write the formula, we need  more notation.  Let $A$ be the free abelian group of finitely supported, $\zz$-valued functions on $\Phi$.  For each subset $T\subseteq S$, let $A^T$ denote the subgroup of all functions $f\in A$ which are constant on each residue of type $T$.  (A ``residue'' of type $T$ is a certain kind of subset of $\Phi$; in the case of the building $W$, a residue of type $T$ is a left coset of $W_T$, the subgroup of $W$ generated by $T$.)  N.B. The empty set, $\emptyset$, is a spherical subset and a residue of type $\emptyset$ is just a single chamber; hence, $A^\emptyset=A$.  
If $U\supset T$, then $A^U\subset A^T$.  Let $A^\ut \subset A^T$ denote the $\zz$-submodule, $\sum_{U\supset T} A^U$.  (Throughout this paper we will use the convention that $\subseteq$ denotes containment and $\subset$ will be reserved for strict containment.  Also, we will use the symbol $\sum$ for an internal sum of modules and $\bigoplus$ to mean either an external direct sum or an internal sum which we have proved is direct.) 

We shall show in Section~\ref{s:decomp} that $A^\ut$ is a direct summand of $A^T$.  Let $\ha^T$ be a complementary summand.  As in \cite{ddjo2} the main computation is a consequence the following Decomposition Theorem (proved as Theorem~\ref{t:decomp} in Section~\ref{s:decomp}).

\begin{decomp}
\[
A=\bigoplus_{U\in \cs} \ha^U\quad\text{and, in fact, for any $T\in\cs$,}\quad A^T=\bigoplus_{U\supseteq T} \ha^U.
\]
\end{decomp}

The point is that this theorem provides a decomposition of a coefficient system which can be used to calculate the compactly supported cohomology of any of the various realizations of $\Phi$.  The calculation in which we are most interested is the following (proved as Corollary~\ref{c:main} in Section~\ref{s:cohomology}).

\begin{theoremA}
Suppose $\Phi$ is a building of finite thickness and type $(W,S)$. Let $K$ be the geometric realization of the poset $\cs$ of spherical subsets of $S$.  Then
\[
H^*_c(\cu(\Phi,K))\cong \bigoplus_{T\in \cs} H^*(K,K^{S-T})\otimes \ha^T.
\]
\end{theoremA}
\noindent
(For any subset $U$ of $S$, $K^U$ denotes the union of the $K_s$, with $s\in U$.)

The above theorem applies to all buildings.  A general building $\Phi$ will not be highly symmetric, in that its automorphism group, $\Aut (\Phi)$, can have infinitely many orbits of chambers.  However, if $\Phi/\Aut(\Phi)$ is finite and if $\gG$ is a torsion-free cocompact lattice in $\Aut (\Phi)$, then the Main Theorem  implies that the cohomological dimension of $\gG$ is equal to the virtual cohomological dimension of the corresponding Coxeter group.  Moreover, this dimension is 
\[
\cohd(\gG)=\vcd(W)=\max\{k\mid H^k(K,K^{S-T})\neq 0, \text{for some $T\in \cs$}\}
\]
(cf.~Corollary~\ref{c:dim}).  As another example, such a torsion-free cocompact lattice is an $n$-dimensional duality group if and only if for each $T\in \cs$, $H^*(K,K^{S-T})$ is free abelian and concentrated in degree $n$ (cf.~Corollary~\ref{c:duality}).

The central objective of \cite{ddjo2} was to calculate $H^*_c(\cu(W,K))$ as a $W$-module.   In that paper we showed there is a filtration of $H^*_c(\cu(W,K))$ by $W$-submodules so that the associated graded terms look like the terms on the right hand side of the formula in the Main Theorem.  Similarly, one can ask about the $G$-module structure of $H^*_c(\cu(\Phi,K))$ for any subgroup $G\subseteq \Aut (\Phi)$.
The methods of \cite{ddjo2} are well adapted to the present paper.  In particular, for each $T\in \cs$, the free abelian group $A^T$ is a $G$-module, as is its quotient $D^T:=A^T/A^\ut$.  So, as in \cite{ddjo2}, there is a filtration of $H^*_c(\cu(\Phi,K))$ by $G$-submodules and we get the following (proved as Theorem~\ref{t:Gmodule}).  

\begin{Theorem}
Suppose $G$ is a group of automorphisms of $\Phi$.  There is a filtration of $H^*_c(\cu(\Phi,K))$ by right $G$-submodules with associated graded term in filtration degree $p$:
\[ 
\bigoplus _{\substack{T\in \cs\\|T|=p}} H^*(K,K^{S-T})\otimes D^T.
\]
\end{Theorem}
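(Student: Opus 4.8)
The plan is to follow the strategy of \cite{ddjo2} for the case $\Phi=W$: build the filtration already at the level of the compactly supported cochain complex, and then observe that the Decomposition Theorem forces the associated spectral sequence to collapse.

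\textbf{Step 1: a $G$-equivariant chain model.} Recall that $\cu(\Phi,K)$ is obtained by gluing copies of $K$ indexed by $\Phi$, and that a cell $\sigma$ of $K$ with $S(\sigma):=\{s\in S\mid \sigma\subseteq K_s\}$ contributes one cell of $\cu(\Phi,K)$ for each residue of $\Phi$ of type $S(\sigma)$. Since $\sigma$ is a cell of $K=|\cs|$, the set $S(\sigma)$ is spherical, so these residues are finite (this is where the thickness hypothesis enters); hence a finitely supported cochain on the cells lying over $\sigma$ is exactly an element of $A^{S(\sigma)}$. The cellular coboundary then identifies $C^*_c(\cu(\Phi,K))$, as a complex of right $G$-modules, with $\bigoplus_{\sigma}A^{S(\sigma)}$ (the summand $A^{S(\sigma)}$ placed in degree $\dim\sigma$), the differential being built from the incidence numbers of $K$ together with the inclusions $A^{S(\sigma)}\hookrightarrow A^{S(\sigma')}$ whenever $\sigma$ is a face of $\sigma'$ (note $S(\sigma')\subseteq S(\sigma)$ then). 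Reorganizing by the Decomposition Theorem $A^{S(\sigma)}=\bigoplus_{U\supseteq S(\sigma)}\ha^U$ splits $C^*_c(\cu(\Phi,K))$ as $\bigoplus_{U\in\cs}C^*(K,K^{S-U})\otimes\ha^U$ into subcomplexes, recovering the Main Theorem — but \emph{not} $G$-equivariantly.

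\textbf{Step 2: the filtration.} For $T\in\cs$ and $p\ge 0$ put $F_pA^T:=\sum_{U\supseteq T,\ |U|\ge p}A^U$; this is a right $G$-submodule of $A^T$, and by the Decomposition Theorem it equals $\bigoplus_{U\supseteq T,\ |U|\ge p}\ha^U$. Set $F^pC^*_c(\cu(\Phi,K)):=\bigoplus_\sigma F_pA^{S(\sigma)}$. This is a decreasing filtration of $C^*_c(\cu(\Phi,K))$ by $G$-subcomplexes. The key observation is that under the splitting of Step 1 one has $F^pC^*_c(\cu(\Phi,K))=\bigoplus_{|U|\ge p}C^*(K,K^{S-U})\otimes\ha^U$, i.e.\ $F^p$ is just the sum of the subcomplexes of ``weight'' $\ge p$. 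Thus the filtration is a direct-sum filtration (all differentials of its spectral sequence vanish), so $F^pH^*_c(\cu(\Phi,K))$ is a $G$-submodule (as $F^pC^*_c(\cu(\Phi,K))$ is $G$-stable) and $\operatorname{gr}^pH^*_c(\cu(\Phi,K))\cong H^*\!\bigl(\operatorname{gr}^pC^*_c(\cu(\Phi,K))\bigr)$.

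\textbf{Step 3: identifying the graded pieces.} Using the Decomposition Theorem one computes, for each $T\in\cs$, that $F_pA^T/F_{p+1}A^T\cong\bigoplus_{U\supseteq T,\ |U|=p}D^U$ as right $G$-modules: for $U\supseteq T$ with $|U|=p$ the composite $A^U\hookrightarrow F_pA^T\to F_pA^T/F_{p+1}A^T$ has kernel exactly $A^{>U}$, and summing these maps over such $U$ gives the isomorphism. Hence $\operatorname{gr}^pC^*_c(\cu(\Phi,K))\cong\bigoplus_{|T|=p}C^*(K,K^{S-T})\otimes D^T$ as complexes of $G$-modules ($G$ acting trivially on $C^*(K,-)$). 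Since each $D^T\cong\ha^T$ is free abelian, taking cohomology commutes with $\otimes D^T$, and combining with Step 2 gives $\operatorname{gr}^pH^*_c(\cu(\Phi,K))\cong\bigoplus_{|T|=p}H^*(K,K^{S-T})\otimes D^T$, which is the assertion. The substantive part of the argument is Step 1 — pinning down the cell structure of $\cu(\Phi,K)$ and checking that compact supports produce precisely the coefficient modules $A^{S(\sigma)}$; the one clever point afterwards is the choice in Step 2 of $F_pA^T=\sum_{|U|\ge p}A^U$ rather than $\bigoplus_{|U|\ge p}\ha^U$, which is manifestly $G$-stable yet, by the Decomposition Theorem, still split once $G$ is forgotten — exactly what makes the spectral sequence degenerate and the graded pieces come out as the $D^T$'s instead of the $\ha^T$'s.
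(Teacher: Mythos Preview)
Your argument is correct and is essentially the paper's own proof: Step~1 is exactly the identification $C^*_c(\cu(\Phi,K))\cong\cac^*(K;\ci(A))$ of Lemma~\ref{l:finite}, your filtration $F_pA^T=\sum_{U\supseteq T,\,|U|\ge p}A^U$ is the coefficient system $\ci(F_p)$ of Section~9 (the paper writes $|T|\le p$, a typo for $\ge$), and your ``spectral sequence degenerates'' is precisely the paper's assertion that the Decomposition Theorem makes $0\to\ch^*(X;\ci(F_{p+1}))\to\ch^*(X;\ci(F_p))\to H^*(\cac^*(X;\ci(F_p))/\cac^*(X;\ci(F_{p+1})))\to 0$ short exact. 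You have spelled out more detail than the paper does, but the route is the same.
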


As we mentioned earlier, although the Main Theorem is the result of importance in geometric group theory, it is no harder to do similar calculations when the chambers are modeled on an arbitrary $X$ (or at least on $X^f$, the complement of the faces of $X$ which have infinite stabilizers in $W$).  In fact, as we explain below, the proof of the Decomposition Theorem depends on first establishing a version of the Main Theorem for $\cu(\Phi,\gD^f)$.  So, the Main Theorem ultimately depends on first proving a version of it in the case of the classical realization. This version (proved as Theorem~\ref{t:SI}) is the following. 

\begin{Theorem}
When $W$ infinite,  $H^*_c(\cu(\Phi, \gD^f))$ is free abelian and is concentrated in the top degree $n$ ($=\dim \gD$).  
\end{Theorem}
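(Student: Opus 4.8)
The plan is to reduce the statement to an exactness property of one explicit cochain complex of abelian groups, and then to deduce that property from the combinatorics of residues in $\Phi$.

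First I would pin down the relevant cochain complex. In the model chamber $\gD$ the faces with finite $W$-stabilizer are exactly the $\gD_T$ with $T\in\cs$, and the relatively open face $\oc{\gD}_T$ has dimension $n-|T|$; in $\cu(\Phi,\gD^f)$ the copies of $\oc{\gD}_T$ coming from the various chambers are glued together precisely along the $T$-residues of $\Phi$. Because $\Phi$ has finite thickness, every spherical residue is finite, so $\cu(\Phi,\gD^f)$ is locally finite and its compactly supported cohomology is computed by the cellular cochain complex $C^\bullet$ with $C^k=\bigoplus_{T\in\cs,\ |T|=n-k}A^T$, the coboundary being the alternating sum of the inclusions $A^T\hookrightarrow A^{T\smallsetminus\{s\}}$. (These are honest inclusions rather than transfers, because the face $\oc{\gD}_{T\smallsetminus\{s\}}$ attached to a residue $R'$ sees the unique larger residue containing $R'$; finite thickness is precisely what keeps the resulting cochains finitely supported.) Two elementary remarks simplify this. (i) If $T\notin\cs$ then $A^T=0$, since a finitely supported function constant on an infinite residue must vanish; hence we may let $T$ range over all subsets of $S$ once we also allow the term $T=S$. (ii) $A^T=\bigcap_{s\in T}A^{\{s\}}$, since a finitely supported function is constant on $T$-residues iff it is constant along every $s$-panel with $s\in T$. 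Thus $C^\bullet$ is exactly the Mayer--Vietoris cochain complex of the family of submodules $\{A^{\{s\}}\}_{s\in S}$ of $A$, except that the latter carries one further term, $\bigcap_{s\in S}A^{\{s\}}=A^S$, in cohomological degree $-1$. This is where the hypothesis that $W$ is infinite enters: then the $S$-residue is all of $\Phi$, which is infinite, so $A^S=0$ and the two complexes agree. The theorem is therefore equivalent to: (a) this Mayer--Vietoris complex is exact except at $A=A^\emptyset$; and (b) the remaining cohomology $H^n_c\cong A\big/\sum_{s\in S}A^{\{s\}}=A/A^{>\emptyset}$ is free abelian.

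For (a) I would appeal to the standard homological fact that the Mayer--Vietoris complex of a finite family of submodules $M_1,\dots,M_m\le M$ is exact off $M$ as soon as the $M_i$ generate a \emph{distributive} sublattice of the lattice of submodules of $M$. So (a) reduces to the combinatorial identity
\[
A^{I}\cap\bigl(A^{J}+A^{K}\bigr)=A^{I\cup J}+A^{I\cup K}\qquad(I,J,K\subseteq S)
\]
(the meet relation $A^{I}\cap A^{J}=A^{I\cup J}$ being immediate). I expect this distributivity to be the crux of the whole argument. The inclusion $\supseteq$ is clear; for $\subseteq$ one starts from $f\in A^{I}$ written as $f=g+h$ with $g\in A^{J}$, $h\in A^{K}$, and must re-express $f$ as a sum of a function constant on $(I\cup J)$-residues and one constant on $(I\cup K)$-residues. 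The available tool is the gate property of buildings: each residue $R$ has a projection $\operatorname{proj}_R$ taking each chamber to its gate in $R$, these projections are compatible along nested residues and carry minimal galleries to minimal galleries. Using them one corrects $g$ and $h$ over the relevant families of residues without altering $g+h$, the essential case being that of rank-two residues, where $\Phi$ is locally a product of a spherical building and trees and the identity can be verified directly. This is exactly the kind of assertion whose naive, apartment-by-apartment proof can fail for non-spherical $W$ --- it is where the argument of \cite{dm} has a gap --- so the building axioms must be invoked carefully.

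For (b), first observe that each inclusion $A^{\{s\}}\hookrightarrow A$ splits with free cokernel: over a single $s$-panel of cardinality $m$ the quotient is $\zz^{m}/\zz(1,\dots,1)\cong\zz^{m-1}$. I would then deduce that $\sum_{s}A^{\{s\}}$ is again a direct summand of the free abelian group $A$, so that $H^n_c=A/A^{>\emptyset}$ is free abelian. Concretely, fix a base chamber $C_0$ and filter the chambers by gallery distance to $C_0$; by the gate property every panel $P$ has all of its chambers except the gate $\operatorname{proj}_P(C_0)$ one step farther from $C_0$, and the panel-indicators $\delta_P\in A^{\{s\}}$ then allow one to delete, compatibly over all $s$, a distinguished chamber from each panel, the remaining $\delta_C$ giving a free basis of $A/A^{>\emptyset}$. (Abstractly one may instead use that in a distributive lattice of submodules, each of which is a direct summand, every join is a direct summand.) Since $C^\bullet$ is concentrated in cohomological degrees $0$ through $n$ once $W$ is infinite, (a) and (b) together give $H^k_c(\cu(\Phi,\gD^f))=0$ for $k\neq n$ and $H^n_c(\cu(\Phi,\gD^f))$ free abelian, which is the theorem.
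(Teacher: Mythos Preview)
Your reduction to the cochain complex with $C^k=\bigoplus_{|T|=n-k}A^T$ is correct, and the theorem is indeed equivalent to exactness of this complex below degree $n$ together with freeness of $A/A^{>\emptyset}$. The gap is that you have not actually proved (a). The distributivity identity $A^I\cap(A^J+A^K)=A^{I\cup J}+A^{I\cup K}$ is essentially equivalent to the Decomposition Theorem, and the paper's entire architecture exists precisely to avoid having to prove it directly. You correctly flag that this is where \cite{dm} went wrong, but your own argument for it is only a gesture: ``correct $g$ and $h$ via gate projections, the essential case being rank two.'' The reduction to rank two is not justified (the subsets $I,J,K$ are arbitrary and gate projections to different residues do not commute in any useful way), and your description of rank-two residues as ``locally a product of a spherical building and trees'' is wrong: an $\{s,t\}$-residue is a generalized $m_{st}$-gon (or a tree when $m_{st}=\infty$), not a product. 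No direct combinatorial proof of this distributivity for arbitrary thick buildings is known; this is exactly the content of the erratum \cite{dm-erratum}, and the authors state explicitly in the introduction that the basis approach ``seems problematic'' in general. Your argument for (b) is likewise only a sketch---it is not clear which $\delta_C$ are meant to survive or why they are linearly independent in the quotient---but (a) is the essential obstruction.

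The paper's route is entirely different and geometric. It constructs a piecewise Euclidean $\cat(0)$ metric on $\cu(\Phi,\gD^f)$ extending Moussong's metric (Theorem~\ref{t:cat}; this occupies all of Section~\ref{s:cat0}), and then applies the Morse-theoretic criterion of Brady--McCammond--Meier (Theorem~\ref{t:bbm}): such a complex is $SI$ once all punctured links are $CM$. The links are spherical joins involving realizations of spherical residues, and the $CM$ property for punctured spherical buildings is supplied by Dymara--Osajda and Schulz (Theorem~\ref{t:pi2}). Thus the paper establishes the $SI$ property first, by geometry, and only afterward deduces the algebraic decomposition (hence, a posteriori, your distributivity) from it---the reverse of your proposed direction of implication.
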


This result is obvious when $\Phi=W$, for then $\cu(W,\gD^f)$ is homeomorphic to Euclidean space $\rr^n$ (see 
Section~\ref{s:geom}).  We prove it for a general $\Phi$  by showing the following (Theorem~\ref{t:cat}). 

\begin{Theorem}
$\cu(\Phi,\gD^f)$ admits a $\cat(0)$ metric (extending Moussong's $\cat(0)$ metric on the standard realization).
\end{Theorem}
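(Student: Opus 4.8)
The plan is to build a piecewise Euclidean (and, where appropriate, piecewise hyperbolic) metric on $\cu(\Phi,\gD^f)$ chamber by chamber, modeled on the corresponding metric Moussong constructs on the Davis complex, and then verify the link condition. Recall that $\cu(\Phi,\gD^f)$ is assembled from copies of $\gD^f$ (the simplex $\gD$ with its ``nonspherical'' faces deleted) glued along mirrors according to the $W$-valued distance function. The key observation is that the nerve-type combinatorics governing the links in $\cu(\Phi,\gD^f)$ are the same as those governing the links in $\cu(W,\gD^f) \cong \rr^n$: at an interior point of a chamber there is nothing to check; at a point on a single mirror or a spherical face the local picture is a join of a sphere (from the building directions, which are ``thick'' but still spherical of the appropriate type because the face is spherical) with the Euclidean factor coming from the model chamber; and at a point lying on a nonspherical face---well, those faces have been removed, so such points do not occur. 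So the first step is to recall Moussong's metrization of $K$ (equivalently of $\cu(W,K)$) and the classical observation (Section~\ref{s:geom}, where $\cu(W,\gD^f)\cong\rr^n$ is noted) that one gets an isometric copy of $\rr^n$ from the same recipe applied to $\gD^f$ in the $W$ case; the second step is to transport this recipe to a general building.

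Concretely, I would metrize each chamber of $\cu(\Phi,\gD^f)$ as a copy of Moussong's metrized $\gD^f$, make the gluing maps along mirrors isometries (which is automatic since all chambers carry the identical metric), and then invoke the standard criterion (Bridson--Haefliger) that a complete, connected, piecewise Euclidean complex with finitely many shapes is $\cat(0)$ if and only if it is simply connected and all vertex links are $\cat(1)$. Simple connectivity is already known---$\cu(\Phi,\gD^f)$ is a (deformation retract neighborhood of a) deformation retract of the contractible $\cu(\Phi,\gD)$, or one argues directly---so the real content is the link condition. The link of a vertex $v$ of $\cu(\Phi,\gD^f)$ decomposes according to the residues through $v$: it is built from the links of $v$ in the individual chambers (each isometric to the corresponding link in Moussong's $\gD^f$, hence $\cat(1)$ by Moussong's lemma) glued along the ``mirror'' sublinks, and the combinatorial pattern of this gluing is exactly that of the building---a union indexed by the chambers in the relevant residues.

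The main obstacle, and the heart of the proof, is showing these vertex links are $\cat(1)$. The clean way is to recognize the link of a vertex of $\cu(\Phi,\gD^f)$ as itself the $\cu(-,-)$-realization of a residue (a building of smaller rank, or rather of the ``link type'' $(W_T, T)$ for the appropriate spherical or nonspherical $T$) with respect to the link of the corresponding vertex in $\gD^f$, which is a spherical analogue of the same construction; one then wants to apply Moussong's lemma, which says that a metric flag simplicial complex all of whose edges have length $\geq \pi/2$ (or an appropriate ``no short loops'' condition) gives a $\cat(1)$ all-right-type complex. For this to go through one needs: (a) that the ``spherical'' pieces---the ones coming from faces of $\gD$ indexed by spherical subsets $T$ with $W_T$ finite---contribute honest spherical buildings whose realizations are $\cat(1)$ (this is classical: spherical buildings with the standard metric are $\cat(1)$, a theorem going back to Davis and to the ``building is CAT(1)'' literature); and (b) that the thickness of the building does not spoil Moussong's edge-length estimates---the point being that inserting extra chambers only subdivides, and the relevant dihedral angles are controlled by the Coxeter diagram of the link, which is the same whether the building is thin ($\Phi=W$) or thick. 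I would carry out the induction on the rank: assume the theorem (in its ``$\cat$'' form, including the spherical analogue for links) for all proper residues, use that to see every vertex link of $\cu(\Phi,\gD^f)$ is $\cat(1)$, and conclude $\cu(\Phi,\gD^f)$ is $\cat(0)$. The delicate bookkeeping is making the induction self-contained---formulating the right ``spherical version'' of the statement so that the links that appear are themselves instances of it---and checking that deleting the nonspherical faces is exactly what is needed to keep all links complete and $\cat(1)$ rather than merely $\cat(1)$-away-from-some-missing-simplices; I expect that verifying this matching between ``faces deleted'' and ``links complete'' is where most of the care is required.
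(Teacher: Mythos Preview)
Your outline has the right global shape (metrize the chambers, verify links are $\cat(1)$, use the join with spherical buildings for the thick directions), but it skips the step that is the actual content of the theorem: constructing a piecewise Euclidean metric on $\gD^f$ that extends Moussong's metric on $K$. Moussong's construction metrizes $K$, not $\gD^f$; there is no ``Moussong's metrized $\gD^f$'' to invoke. The paper's main work in this section is to define a new cell structure on $\gD^f$ whose cells have the form $B_{T,V}\times[0,\infty)^\ga$, where $B_{T,V}$ is a (reflecting face of a) Coxeter block and $\ga$ is a flag of \emph{nonspherical} proper subsets of $S$ with $T<\min\ga$. The noncompact factors $[0,\infty)^\ga$ are what push the deleted nonspherical faces off to infinity. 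With this cell structure in hand, the link $L'$ of the central vertex is described explicitly as a subdivision of $\partial\gD^{op}$ containing $L$ as a full subcomplex, and one checks directly that $L'$ has size $\ge\pi/2$ and is a metric flag complex; Moussong's Lemma then gives $\cat(1)$. None of this is an induction on rank.

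A related confusion: the homeomorphism $\cu(W,\gD^f)\cong\rr^n$ comes from the interior of the Tits cone and is only a homeomorphism, not an isometry for any metric under discussion; it does not by itself supply a $\cat(0)$ metric or tell you what the chamber metric should be. Once the cell structure on $\gD^f$ is built and $L'$ is shown $\cat(1)$, the passage to a general building is exactly as you say: the link of a cell in $\cu(\Phi,\gD^f)$ is a spherical join $\Lk(\gs_d,L')*\sphere(R)$ with $\sphere(R)$ a spherical building, and one uses that spherical buildings are $\cat(1)$ and that joins of $\cat(1)$ spaces are $\cat(1)$. So your second step is fine; what is missing is the first.
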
 

The existence of this $\cat(0)$ metric on $\cu(\Phi,\gD^f)$ is of independent interest. 
(See \cite{moussong, dbuild} or \cite{dbook} for a description of Moussong's metric on $\cu(\Phi, K)$.)

To finish the calculation of $H^*_c(\cu(\Phi, \gD^f))$ we  invoke a result of \cite{bbm} which asserts that the compactly supported cohomology of such a $\cat(0)$ space is concentrated in the top degree provided the cohomology of each ``punctured link'' vanishes except
in the top degree.  These links are spherical buildings and the vanishing of the cohomology groups, in degrees below the top, of their punctured versions is a result of \cite{dymo} (and independently, \cite{someone}).

As we have said, special cases of the Decomposition Theorem were proved in  \cite{ddjo2}.  The method of \cite{ddjo2} was simply to find a basis for $A$ adapted to its decomposition into the $\ha^T$.  In the general case, finding an explicit description of such a basis seems problematic.  We use instead an idea coming from an analogy with the argument of \cite{ddjo}.  In that paper we proved $L^2$ versions of the Decomposition Theorem and of the Main Theorem.  The proof of the $L^2$ version of the Decomposition Theorem was homological:  the key step was to show that certain ``weighted $L^2$-homology'' groups of certain auxiliary spaces associated to $W$ vanished except in the bottom  degree.  We then applied a certain duality (not applicable here) to deduce the Decomposition Theorem in the cases of actual interest.  Analogously, in this paper we prove the Decomposition Theorem by establishing the vanishing, except in the top degree, of the cohomology of certain auxiliary spaces. The most important of these auxiliary spaces is $\cu(\Phi,\gD^f)$ and we indicated in the previous paragraphs how we prove the result in that case. 

\section{Coxeter groups and buildings}\label{s:buildings} 
A \emph{chamber system over a set $S$} is a set $\Phi$ of \emph{chambers} together with a family of equivalence relations on $\Phi$ indexed by $S$.  Two chambers are \emph{$s$-equivalent} if they are related via the equivalence relation with index $s$; they are \emph{$s$-adjacent} if they are $s$-equivalent and not equal.
 A \emph{gallery} in $\Phi$ is a finite sequence of 
chambers $(\gf_0, \dots ,\gf_k)$ such that $\gf_{j-1}$ is adjacent to $\gf_j, 1 \le j \le k$.  The  \emph{type} of this gallery is the word $\bs
=(s_1,  \dots, s_k)$ where $\gf_{j-1}$ is $s_j$-adjacent to $\gf_j$.  If each $s_j$ 
belongs to a given subset $T$ of $S$, then the gallery is a \emph{$T$-gallery}. 
A chamber system is \emph{connected} (resp.,  \emph{$T$-connected}) if any 
two chambers can be joined by a gallery (resp., a $T$-gallery).  
The $T$-connected components of a chamber system $\Phi$ are its 
\emph{residues} of  \emph{type $T$}.  
 
A \emph{Coxeter matrix over a set} $S$ is an $S\times S$ symmetric matrix $M=(m_{st})$ with each diagonal entry $=1
$ and each off-diagonal entry an integer $\ge 2$ or the symbol $\infty$.  The matrix $M$ defines a presentation of a group $W$ as follows:  the set of generators is $S$ and the relations have the form $(st)^{m_{st}}$ where $(s,t)$ ranges over all pairs in $S\times S$ such that $m_{st}\neq \infty$.  The pair $(W,S)$ is a \emph{Coxeter system} (cf. ~\cite{bourbaki,dbook}).  Given  $T\subseteq S$, $W_T$ denotes the subgroup generated by $T$; it is called a \emph{special subgroup}.  $(W_T,T)$ is itself a Coxeter system.  The subset $T$ is \emph{spherical} if $W_T$ is finite.  

\begin{definition}
The poset of spherical subsets of $S$ (partially ordered by inclusion) is denoted $\cs$.
\end{definition}

Also,  $\cp$ ($=\cp(S)$) will denote the poset of all proper subsets of $S$.   (In Section~\ref{s:cat0} the poset $\cp-\cs$ plays a role.)

Suppose $(W,S)$ is a Coxeter system and $M=(m_{st})$ is its Coxeter matrix.  
Following  \cite{ronan} (or \cite{ddjo2}), 
a \emph{building of type $(W,S)$} (or of \emph{type $M$}) is  a chamber system $\Phi$ over $S$ such that 
\begin{enumeratei}
\item
for all $s\in S$, each $s$-equivalence class contains at least two chambers, and 
\item
there exists a \emph{$W$-valued  distance function}  $\gd: \Phi \times 
\Phi \to W$.  (This means that given  a reduced word  $\bs$ for an element $w\in W$, chambers $\gf$ and $\gf'$ can be joined by a gallery 
of type $\bs$ from $\gf$ to $\gf'$ if and only if $\gd (\gf, \gf') = w$.) 
\end{enumeratei}
\begin{example}\label{ex:thin}
The group $W$ itself has the structure of a building: the $s$-equivalence classes are the left cosets of $W_{\{s\}}$ and the $W$-valued distance, $\gd:W\times W\to W$, is defined by $\gd(v,w)=v^\minus w$.
\end{example}

A residue of type $T$ is  a building; its type is $(W_T,T)$.  
A building of type $(W,S)$ is \emph{spherical} if $W$ is finite.  A building has \emph{finite thickness} if each $s$-equivalence class is finite, for each $s\in S$.  (This implies all spherical residues are finite.)  \emph{Henceforth, all buildings will be assumed to have finite thickness}.

\section{Geometric realizations of Coxeter groups and buildings}\label{s:geom}
A \emph{mirror structure over a set} $S$ on a space $X$ is a family of subspaces $(X_s)_{s\in S}$ indexed by $S$.  Given a mirror structure on $X$, a subspace $Y\subseteq X$ inherits a mirror structure by $Y_s:=Y\cap X_s$.  If $X$ is a CW complex and each $X_s$ is a subcomplex, then $X$ is a \emph{mirrored CW complex}.  For each nonempty subset $T\subseteq S$, define subspaces $X_T$ and $X^T$ by
\begin{equation}\label{e:T}
X_T:=\bigcap_{s\in T} X_s\quad\text{and}\quad X^T:=\bigcup_{s\in T} X_s. 
\end{equation}
Put $X_\emptyset:=X$ and $X^\emptyset:=\emptyset$.  
Given a cell $c$ of (a CW complex) $X$ or a point $x \in X$, put
\begin{align*}
S(c)&:=\{s\in S\mid c\subseteq X_s\},\\
S(x)&:=\{s\in S\mid x\in X_s\}.
\end{align*}

Suppose now that $S$ is the set of generators for a Coxeter system $(W,S)$.  
Let $\gU(X)$ denote the union of the nonspherical faces of $X$ and $X^f$ its complement in $X$, i.e.,
\begin{equation}\label{e:xf}
\gU(X):=\bigcup_{T\notin \cs} X_T\quad \text{and}\quad X^f:=X-\gU(X).
\end{equation}
The mirror structure is \emph{$W$-finite} if $\gU(X)=\emptyset$.

Given a building $\Phi$ of type $(W,S)$ and a mirrored space $X$ over $S$,
define an equivalence relation  $\sim$ on $\Phi \times X$ by $(\gf, x) \sim (\gf', x')$ if and 
only if $x = x'$ and $\gd (\gf, \gf') \in W_{S(x)}$ (i.e., $\gf$ and $\gf'$ belong to the same $S(x)$-residue).  The $X$-\emph{realization} 
of $\Phi$, denoted  $\cu(\Phi,X)$, is defined by
	\begin{equation}\label{e: defUPhi}
	\cu(\Phi,X):=(\Phi \times X)/\sim.  
	\end{equation}
($\Phi$ has the discrete topology.)  Suppose $X$ is a mirrored CW complex and that we are given a cell $c$ of $X$ and a chamber $\gf\in \Phi$.  Then  $\gf\cdot c$ denotes the corresponding cell in $\cu(\Phi,X)$.  
Let $\cu^{(i)}$ denote the set of $i$-cells in $\cu(\Phi,X)$.  
Each such cell has the form $\gf\cdot c$ for some $\gf\in \Phi$ and $i$-cell $c$ of $X$.
\vspace{.2cm}

\noindent
\textbf{The classical realization}.  $\gD$ denotes the simplex of dimension $|S|-1$, with its codimension one faces indexed by $S$. In other words, the mirror $\gD_s$ is a codimension one face and $\gD_T$ (the intersection of the $\gD_s$ over all $s\in T$) is a face of codimension $|T|$.

The simplicial complex $\cu(W,\gD)$ is  the \emph{Coxeter complex} of $(W,S)$ while $\cu(\Phi, \gD)$ is the \emph{classical realization} of the building $\Phi$.

Tits constructed a representation of $W$ on $\rr^S$ called ``the contragredient of the canonical representation'' in \cite{bourbaki} and the ``geometric representation'' in \cite{dbook}.  The elements of $S$ are represented by reflections across the codimension one faces of a simplicial cone $C$.  The union  of translates of $C$ is denoted by $WC$.  It is a convex cone and $W$ acts properly on its interior $\ci$.  If $W$ is infinite, $WC$ is a proper cone. If $C^f$ denotes the complement of the nonspherical faces of $C$, then $\ci$ is equivariantly homeomorphic to $\cu(W,C^f)$.  Assume  $W$ is infinite.  Then the  image of $C-0$ in projective space can be identified with the simplex $\gD$ obtained by intersecting $C$ with some affine hyperplane; moreover,  $\gD^f$ is identified with the intersection of $C^f$ and this hyperplane.   The image of $\ci$ in projective space is then identified with $\cu(W,\gD^f)$.  Since this image is the interior of a topological disk (of dimension $|S|-1$), it follows that $\cu(W,\gD^f)$ is homeomorphic to a Euclidean space of that dimension.  In particular, it is contractible.
\vspace{.2cm}

\noindent
\textbf{Geometric realizations of posets}.  
Given a poset $\ct$, $\flag(\ct)$ denotes the set of finite chains in $\ct$, partially ordered by inclusion, i.e., an element of $\flag(\ct)$ is a finite, nonempty, totally ordered subset of $\ct$.  If $\ga=\{t_0,\dots,t_k\}\in \flag(\ct)$ where $t_0<\cdots<t_k$, then we will write $\ga:=\{t_0<\cdots<t_k\}$ and $\min \ga:=t_0$.  
$\flag(\ct)$ is an abstract simplicial complex with vertex set $\ct$ and with $k$-simplices the elements of $\flag(\ct)$ of cardinality $k+1$.  The corresponding topological simplicial complex is the \emph{geometric realization} of the poset $\ct$ and is denoted by $|\ct|$.  For example, if $\cp$ is the poset of proper subsets of $S$, partially ordered by inclusion, then its opposite poset, $\cp^{op}$, is the poset of nonempty faces of the simplex $\gD$. $\flag(\cp)$ is the poset of simplices in its barycentric subdivision, $b\gD$, and $|\cp|=b\gD$.
\vspace{.2cm}

\noindent
\textbf{The standard realization, $\boldsymbol{\cu(\Phi,K)}$}.  
As before, $\cs$ denotes the poset of spherical subsets of $S$. Put $K:=|\cs|$.  It is a subcomplex of $b\gD$ (provided $W$ is infinite).  The mirror structure on $\gD$ induces one on $K$.  More specifically, for each $s\in S$, put $K_s:=|\cs_{\ge \{s\}}|$ and for each $T\in \cs$, $K_T=|\cs_{\ge T}|$. ($K$ is the ``compact core'' of $\gD^f$.) $K$ is sometimes called the \emph{Davis chamber} of $(W,S)$ and $\cu(W,K)$, the \emph{Davis complex}.  Alternatively, $\cu(\Phi,K)$ is the geometric realization of the poset of spherical residues of $\Phi$ (see \cite{dbuild}).

By construction $\cu(\Phi, K)$ is locally finite (since $\Phi$ is assumed to have finite thickness).  It is proved in \cite{dbuild} that $\cu(\Phi,K)$ is contractible. 
\vspace{.2cm}

\noindent
\textbf{The realization $\boldsymbol{\cu(\Phi,\gD^f)}$}.  $\gD^f$ and $K$ have the same poset of faces (indexed by $\cs$) and there is a face-preserving deformation retraction $\gD^f\to K$.  That is to say, $\gD^f$ is a ``thickened version'' of $K$.  Similarly, $\cu(\Phi,\gD^f)$ is a thickened version of $\cu(\Phi,K)$.  Like $\cu(\Phi, K)$, the space $\cu(\Phi, \gD^f)$ has the advantage of being locally finite; however, the chamber $\gD^f$ is not compact whenever $\gD^f\neq \gD$.

\section{A $\boldsymbol{\cat(0)}$ metric on $\boldsymbol{\cu(\Phi,\gD^f)}$}\label{s:cat0}

Our goal in this section is to prove the following.

\begin{theorem}\label{t:cat}
Let $\Phi$ be a building of type $(W,S)$ with $W$ infinite.  Then there is a piecewise Euclidean, $\cat(0)$ metric on $\cu(\Phi, \gD^f)$.
\end{theorem}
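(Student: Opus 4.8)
The plan is to build the metric one chamber at a time and then check that the pieces fit together with non-positive curvature along the mirrors, using Moussong's metric on $\cu(\Phi,K)$ as the model for what happens near the ``core.'' First I would fix a piecewise Euclidean metric on the model chamber $\gD^f$: realize $\gD$ as a simplicial cone cross-section as in Section~\ref{s:geom}, metrize the closed Davis chamber $K$ by Moussong's piecewise Euclidean metric, and extend this over the ``collar'' $\gD^f - K$ by gluing, for each nonspherical face $\gD_T$ that is deleted, a product-like Euclidean piece. Concretely, $\gD^f$ deformation retracts onto $K$ with the retraction respecting the face structure indexed by $\cs$, so I would metrize $\gD^f$ as a locally finite piecewise Euclidean complex in which $K$ sits isometrically as Moussong's chamber and each ``missing'' face direction contributes a Euclidean half-line factor; the key local requirement is that the dihedral angle along each mirror $\gD_s$ equals $\pi/m_{st}$ between $\gD_s$ and $\gD_t$ whenever $\{s,t\}$ is spherical, exactly as Moussong arranges. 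Transport this metric to every chamber $\gf\cdot\gD^f$ of $\cu(\Phi,\gD^f)$ by the obvious identification; since the gluing maps along mirrors are isometries of $\gD^f$-faces, this gives a well-defined, piecewise Euclidean, locally finite length metric on $\cu(\Phi,\gD^f)$, hence a complete geodesic space by Bridson's theorem (the Shapes condition is met since only finitely many isometry types of cells occur).

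The real content is showing the metric is $\cat(0)$, which by the Cartan--Hadamard theorem reduces to: (i) $\cu(\Phi,\gD^f)$ is simply connected, and (ii) the metric is locally $\cat(0)$, i.e. the link of every cell is $\cat(1)$. Simple connectivity follows because $\cu(\Phi,\gD^f)$ is contractible: it deformation retracts onto $\cu(\Phi,K)$, which is contractible by \cite{dbuild} (the retraction $\gD^f\to K$ is $W_{S(x)}$-equivariant on fibers, so it descends). For the link condition I would stratify by the type of cell. At a point in the interior of the core, the link is exactly a link in $\cu(\Phi,K)$ with Moussong's metric, known to be $\cat(1)$. At a point lying on a mirror or in the collar region, the link decomposes (metrically, as a spherical join) into a Euclidean-sphere factor coming from the ``missing face'' directions and a factor that is the link of a simplex in a spherical residue. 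This second factor is a \emph{punctured spherical building} — the link of a cell in $\cu(\Phi,\gD)$ restricted to $\gD^f$ is the building on the corresponding spherical residue with its nonspherical (here: non-proper, i.e. the residue itself if it were nonspherical) faces removed. Since spherical joins of $\cat(1)$ spaces are $\cat(1)$, and a round sphere is $\cat(1)$, it suffices to know each such punctured spherical building, with the metric induced from the all-spherical-Coxeter Moussong metric, is $\cat(1)$. For a finite Coxeter group this is Moussong's lemma; for a genuine spherical building it is the statement that the building, metrized so each apartment is a round sphere with the standard Coxeter cell structure, is $\cat(1)$ — this is the classical result that spherical buildings are $\cat(1)$ (due to Davis, following Moussong/Charney; the punctured version is handled by \cite{dymo}).

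The main obstacle I anticipate is (ii) in the borderline case: verifying that the link at a cell of $\cu(\Phi,\gD^f)$ genuinely splits as the announced spherical join, and that the ``building factor'' one obtains is precisely a (punctured) spherical building metrized so that its apartments are the standard round realizations. This requires carefully tracking how the mirror structure of $\gD$ restricts to the link of a face $\gD_T$ with $T\in\cs$: the link of $\gD_T$ in $\gD$ is the simplex on $S-T$ with its faces indexed by subsets of $S-T$, the building structure on the link comes from the type-$T$ residue through $\gf$, and the passage $\gD\rightsquigarrow\gD^f$ removes exactly those join factors corresponding to nonspherical subsets of $S$ containing $T$ — so the surviving building factor is the one associated to $\cs_{\geq T}$, i.e. a finite-type (spherical) piece, and the removal of nonspherical faces in the ambient complex corresponds to puncturing. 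Once this identification is pinned down, $\cat(1)$-ness of the link follows from the join decomposition together with Moussong's lemma and the $\cat(1)$-ness of spherical buildings; assembling the global statement is then the Cartan--Hadamard theorem. I would also remark that when $\gD^f=\gD$ (the $W$-finite case) the whole construction collapses to Moussong's metric on $\cu(\Phi,K)=\cu(\Phi,\gD)$, and when $\Phi=W$ it reduces to the classical fact that $\cu(W,\gD^f)\cong\rr^n$.
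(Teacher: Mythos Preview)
Your overall strategy matches the paper's: endow $\gD^f$ with a piecewise Euclidean cell structure extending Moussong's metric on $K$, transport it to $\cu(\Phi,\gD^f)$, then verify the link condition and invoke Cartan--Hadamard. However, your link analysis contains a genuine confusion that would have to be repaired before the argument goes through.

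The paper makes the cell structure completely explicit: the cells of $\gD^f$ are products $B_{T,V}\times[0,\infty)^\ga$ with $T\in\cs$, $V\subseteq T$, and $\ga\in\flag(\cp-\cs)$ satisfying $T<\min\ga$. The link $L'$ of the central vertex is then a specific simplicial subdivision of $\partial\gdo$ (containing the nerve $L$ as a full subcomplex), and the key technical step is to check that $L'$ has size $\ge\pi/2$ and is a \emph{metric flag complex}, so Moussong's Lemma yields that $L'$ is $\cat(1)$. The link of a general cell $c=B_{T,V}\times[0,\infty)^\ga$ in $\cu(\Phi,\gD^f)$ is then identified (Lemma~\ref{l:links}) as the spherical join
\[
\Lk(\gs_d,L')*\sphere(R),
\]
where $\gs_d$ is a simplex of $L'$ and $\sphere(R)$ is the spherical realization of the type-$V$ residue $R$. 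Since $V\subseteq T\in\cs$, this residue is spherical and $\sphere(R)$ is an honest spherical building---\emph{not} punctured. The theorem follows because links in a metric flag complex of size $\ge\pi/2$ are again such (so $\Lk(\gs_d,L')$ is $\cat(1)$), spherical buildings are $\cat(1)$, and spherical joins preserve $\cat(1)$.

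Your description of the link does not match this. You assert that the building factor is a ``punctured spherical building'' and invoke \cite{dymo} for its $\cat(1)$-ness. This conflates two separate threads of the paper: punctured links $P\sphere(R)_p$ appear only in Section~\ref{s:morse}, where the property needed is that they are Cohen--Macaulay (a cohomological statement, Theorem~\ref{t:pi2}), used to prove $\cu(\Phi,\gD^f)$ is $SI$. The $\cat(0)$ argument requires no puncturing at all. Likewise, your ``Euclidean-sphere factor from the missing face directions'' is not accurate: the $[0,\infty)^\ga$ factor contributes an all-right spherical simplex sitting inside $L'$, not a round sphere; the genuine ``chamber-side'' factor is $\Lk(\gs_d,L')$, and what one must establish is that the subdivided complex $L'$ satisfies Moussong's metric-flag criterion. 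That verification---the content of Lemmas~\ref{l:subdivision} through \ref{l:L'}---is the technical heart of the proof, and your proposal does not supply it.
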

\vspace{.2cm}

\noindent
\textbf{Review of the Moussong metric}.  
Suppose $T$ is a spherical subset of $S$.  
$W_T$ acts on $\rr^T$ via the canonical representation.  The \emph{Coxeter cell} of \emph{type $T$}, denoted  $P_T$, is defined to be the convex hull of the $W_T$-orbit of a point $x_0$ in the interior of the fundamental simplicial cone.  As examples, if $W_T$ is a product of $n$ copies of the cyclic group of order $2$, then $P_T$ is an $n$-cube; if $W_T$ is the symmetric group on $n+1$ letters, then $P_T$ is an $n$-dimensional permutohedron.  Its boundary complex, $\partial P_T$, is the dual of the Coxeter complex of $W_T$ (the Coxeter complex is a triangulation of the unit sphere in $\rr^T$).  The fact that $\partial P_T$ is dual to a simplicial complex means that  $P_T$ is a ``simple polytope''.  The isometry type of $P_T$ is determined once we choose the distance from $x_0$ to each of the bounding hyperplanes of the simplicial cone.  (We assume, without further comment, that such a choice of distance has been made for each $s\in S$.)  The intersection of $P_T$ with the fundamental simplicial cone is denoted $B_T$ and  called the \emph{Coxeter block} of type $T$.  It is a convex cell combinatorially isomorphic to a cube of dimension $|T|$ (because $P_T$ is simple).  One can identify $B_T$ with the subcomplex $|\cs_{\le T}|$
of $K$ in such a way that $x_0$ is identified with the vertex corresponding to $\emptyset$.  To be more precise, $B_T$ is the union of simplices of $\flag(\cs)$ whose maximum vertex is $\le T$, i.e., $\vert \cs_{\le T} \vert$ is a subdivision of $B_T$.

The convex polytope $B_T$ has two types of faces.  First, there are the faces which contain the vertex $x_0$.  Each such face is a Coxeter block of the form $B_V$ for some $V\subseteq T$.  The other type of face is the intersection of $B_T$ with the face of the fundamental simplicial cone fixed by $W_V$ for some nonempty $V\subseteq T$.  We denote such a face by $B_{T,V}$ and call it a \emph{reflecting face} of $B_T$.  For the purpose of unifying different cases, we shall sometimes write $B_{T,\emptyset}$ instead of $B_T$.

The \emph{Moussong metric} on $K$ is the piecewise Euclidean metric on $K$ in which each Coxeter block $B_T$ is given its Euclidean metric as a convex cell in $\rr^T$ (cf. \cite{moussong} or \cite[Section 12.1]{dbook}).  This induces a piecewise Euclidean metric on $\cu(W,K)$ as well as one on $\cu(\Phi,K)$.  The link of the central vertex corresponding to $\emptyset$ can be identified with a certain simplicial complex $L$ ($=L(W,S)$) called the \emph{nerve} of the Coxeter system.  The vertex set of $L$ is $S$ and a subset $T\subseteq S$ spans a simplex if and only if it is spherical.  Thus, the poset of simplices in $L$ (including the empty simplex) is $\cs$.  
The piecewise Euclidean metric on $K$ induces a piecewise spherical metric on $L$ such that whenever $m_{st}< \infty$, the length of the edge corresponding to $\{s,t\}$ is $\pi-\pi/m_{st}$.  Moussong proved that this piecewise spherical metric on $L$ is $\cat(1)$ and from this he deduced that the piecewise Euclidean metric on $\cu(W,K)$ is $\cat(0)$ (cf. \cite{moussong} and \cite[Section 12.3]{dbook}). Using this, it is proved in \cite{dbuild} that for any building $\Phi$, the Moussong metric on the standard realization,  $\cu(\Phi,K)$, is $\cat(0)$.
\vspace{.2cm}

\noindent
\textbf{A piecewise Euclidean metric on $\boldsymbol{\gD^f}$}.  We will define a cell structure on $\gD^f$ so that each cell will have the form $B_{T,V} \times [0,\infty)^m$ for some $T\in \cs$, $V\subseteq T$ and nonnegative integer $m$.  When $m>0$ such a cell will be noncompact.  There are two types of such cells.  First there are the compact cells  $B_{T,V}$ where $T\in \cs$ and $V\subseteq T$.  The remaining cells are in bijective correspondence with triples $(T,V,\ga)$ where $T\in \cs$, $V\subseteq T$ and $\ga\in \flag (\cp-\cs)$ is such that $T< \min \ga$.  Let $\cf$ be the set consisting of pairs $(T,V)$ and triples $(T,V,\ga)$,  where $T\in \cs$, $V\subseteq T$ and $\ga\in \flag (\cp-\cs)$ is such that $T< \min \ga$.  $\cf$ is partially ordered as follows:  
\begin{itemize}
\item
$(T', V')\le (T,V)$ if and only if $T'\subseteq T$ and $V'\subseteq V$,
\item
$(T',V',\ga')\le (T,V,\ga)$ if and only if $(T',V')\le (T,V)$ and $\ga'\subseteq \ga$, 
\item
$(T',V')\le (T,V,\ga)$ if and only if $(T',V')\le (T,V)$.
\end{itemize}
The cell $c(T,V,\ga)$ which corresponds to $(T,V,\ga)$ is defined to be $B_{T,V}\times [0,\infty)^\ga$, where $[0,\infty)^\ga$ means the set of all functions from the finite set $\ga$ to $[0,\infty)$.  For the most part, it will suffice to  deal with the case $V=\emptyset$ since the cells of the form $B_T\times[0,\infty)^\ga$ cover $\gD^f$. The piecewise Euclidean structure on $\gD^f$ will be defined by declaring each $B_T\times [0,\infty)^\ga$ to have the product metric. Thus, the piecewise Euclidean metric on $\gD^f$ will extend the one on $K$.

\begin{lemma}\label{l:cell}
$\gD^f$ has a decomposition into the cells, $\{B_{T,V}\}\cup \{ c(T,V,\ga)\}$, defined above, where $(T,V)$ and $(T,V,\ga)$ range over $\cf$.
\end{lemma}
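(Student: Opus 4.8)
The plan is to carry out the decomposition inside the barycentric subdivision $b\gD=|\cp|$, where it is essentially combinatorial, and then to recognize each resulting piece as a product $B_{T,V}\times[0,\infty)^\ga$ by one elementary homeomorphism. First I would locate $\gD^f$ inside $b\gD$: for a chain $c=\{T_0<\cdots<T_k\}$ in $\cp$, every point $x$ of the open simplex $\oc c$ has $S(x)=\min c=T_0$ (the barycenter of $\gD_{T_i}$ has $S(\,\cdot\,)=T_i$, and $\bigcap_iT_i=T_0$), so $\oc{\gD}_T=\bigsqcup\{\oc c\mid\min c=T\}$; since $\gU(\gD)=\bigsqcup_{T\notin\cs}\oc{\gD}_T$ and $\cs$ is closed under passage to subsets,
\[
\gD^f=\bigsqcup_{T\in\cs}\oc{\gD}_T=\bigsqcup\{\oc c\mid c\cap\cs\neq\emptyset\}=b\gD-|\cp-\cs|.
\]
Moreover the spherical members of a chain form an initial segment, so a chain $c$ with $c\cap\cs\neq\emptyset$ factors uniquely as $c=c^{s}\sqcup c^{n}$, with $c^{s}=c\cap\cs$ a nonempty chain in $\cs$, $c^{n}=c-\cs$ a (possibly empty) chain in $\cp-\cs$, and $\max c^{s}\subsetneq\min c^{n}$ when both are nonempty; write $T(c)=\max c^{s}$, $V(c)=\min c^{s}$, $\ga(c)=c^{n}$.

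Next I would pin down the cells. For $V\subseteq T\in\cs$ put $[V,T]=\{U\mid V\subseteq U\subseteq T\}$; every such $U$ is spherical, and (as in the discussion of Coxeter blocks above) $B_{T,V}$ is the full subcomplex of $b\gD$ on the vertex set $[V,T]$, i.e. $B_{T,V}=\bigsqcup\{\oc c\mid c\text{ a chain in }[V,T]\}$, a cube of dimension $|T|-|V|$. For $(T,V,\ga)\in\cf$ with $\ga=\{U_1<\cdots<U_m\}$, let $c(T,V,\ga)\subseteq\gD^f$ be the union of the open simplices $\oc\sigma$ over chains $\sigma=c^{s}\sqcup c^{n}$ with $c^{s}$ a nonempty chain in $[V,T]$ and $c^{n}\subseteq\ga$ (equivalently: $\sigma\cap\cs$ a nonempty chain in $[V,T]$ and $\sigma-\cs\subseteq\ga$). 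The union of the corresponding closed simplices $\bar\sigma$ is the realization of the simplicial join $B_{T,V}*\Delta_\ga$, where $\Delta_\ga$ is the $(m-1)$-simplex spanned by $\ga$, and $c(T,V,\ga)$ is exactly this join with the closed face $\Delta_\ga$ removed. Now $(A*\Delta^{m-1})-\Delta^{m-1}$ is homeomorphic to $A\times[0,\infty)^{m}$ for any finite complex $A$ --- a point of the join at join parameter $s\in[0,1)$ over $a\in A$ and $d\in\Delta^{m-1}$ going to $\bigl(a,\tfrac{s}{1-s}d\bigr)$ --- so one obtains a homeomorphism $c(T,V,\ga)\cong B_{T,V}\times[0,\infty)^\ga$ compatible with faces, the faces of the product being the cells $B_{T',V'}\times[0,\infty)^{\ga'}$ with $[V',T']\subseteq[V,T]$ and $\ga'\subseteq\ga$ ($\ga'=\emptyset$ yielding $B_{T',V'}$).

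It then remains to see that $\{B_{T,V}\}\cup\{c(T,V,\ga)\}$ is a cell decomposition of $\gD^f$; as the cells $B_T$ and $B_T\times[0,\infty)^\ga$ already cover $\gD^f$ and the others occur among their faces, it suffices to treat $V=\emptyset$. Every $x\in\gD^f$ lies in a unique $\oc c$, hence, by the first paragraph, in the relative interior of exactly one cell: $B_{T(c),V(c)}$ if $\ga(c)=\emptyset$ and $c(T(c),V(c),\ga(c))$ otherwise, the relative interior of $c(T,V,\ga)$ being the union of those $\oc{(c^{s}\sqcup\ga)}$ with $\min c^{s}=V$ and $\max c^{s}=T$. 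Intersecting the defining families of two cells gives $c(T_1,V_1,\ga_1)\cap c(T_2,V_2,\ga_2)=c(T_1\cap T_2,V_1\cup V_2,\ga_1\cap\ga_2)$ when $V_1\cup V_2\subseteq T_1\cap T_2$ (with the pair $B_{T_1\cap T_2,V_1\cup V_2}$ when $\ga_1\cap\ga_2=\emptyset$), and $\emptyset$ otherwise, and similarly for intersections with the $B_{T,V}$; in every case this is a common face. Local finiteness is automatic since $\cs$, hence $\cf$, is finite, and this gives the desired (piecewise Euclidean) cell decomposition.

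I expect the delicate point to be the construction of the second paragraph: beyond identifying each $c(T,V,\ga)$ with the stated product, one must check that these homeomorphisms agree on overlaps, so that the attaching maps fit into an honest cell structure on $\gD^f$ and not merely a set-theoretic partition --- equivalently, that the $m$ noncompact directions produced by a chain $\ga=\{U_1<\cdots<U_m\}$ of nonspherical subsets genuinely behave as an independent orthant (this is the analogue, for the classical chamber, of the cube structure on $K$). The rest is the bookkeeping above, granted the identification $\gD^f=b\gD-|\cp-\cs|$ and the Coxeter-block description of $K$ from \cite{dbook}.
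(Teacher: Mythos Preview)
Your proposal is correct and follows essentially the same approach as the paper: split each simplex of $b\gD$ meeting $\gD^f$ as a join $\gs_{\ga'}*\gs_{\ga''}$ of its spherical part $\ga'\in\flag(\cs)$ and nonspherical part $\ga''\in\flag(\cp-\cs)$, then use an explicit ``open cone $\cong$ orthant'' homeomorphism to identify $\gs_{\ga'}*\gs_{\ga''}-\gs_{\ga''}$ with $\gs_{\ga'}\times[0,\infty)^{\ga''}$. The paper's proof is terser---it records only the identification formula $[t,x,y]\mapsto(x,r(t)\theta_{\ga''}(y))$ on each simplex (normalizing $y$ to the all-right spherical simplex first, rather than your direct scaling $\frac{s}{1-s}d$ of barycentric coordinates)---while you additionally spell out the partition of $\gD^f$ into relative interiors, the face-intersection formula, and the compatibility of the product identifications across faces; but the underlying argument is the same.
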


To prove this, we need to set up a standard identification of the open cone of radius 1 on a $k$-simplex $\gs$ with the standard simplicial cone $[0,\infty)^{k+1}\subset \rr^{k+1}$.  Let $\{v_b\}_{b\in \cb}$ be the vertex set of $\gs$ for some finite index set $\cb$ and let $(x_b)_{b\in\cb}$ be barycentric coordinates on $\gs$.  Let $\rr^\cb$ be the Euclidean space of all functions $\cb\to \rr$.  Let $\sphere_+(\rr^\cb)$ denote the intersection of the standard simplicial cone $[0,\infty)^\cb$ with the unit sphere.
Thus, $\sphere_+(\rr^\cb)$ is an ``all right'' spherical simplex (i.e., all edge lengths and all dihedral angles are $\pi/2$).    Let $\{e_b\}_{b\in \cb}$ be the standard basis for $\rr^\cb$.  Define a homeomorphism $\theta_\cb:\gs\to \sphere_+(\rr^\cb)$, taking $v_b$ to $e_b$, by
\begin{equation}\label{e:theta}
\sum x_b v_b \to \sum x_be_b\  \bigg{/}\ (\sum x_b^2)^{1/2}.
\end{equation}
Let $r:[0,1)\to [0,\infty)$ be some fixed homeomorphism.  The open cone on $\gs$ can be regarded as the points $[t,x]$ in the  join, $v*\gs$, of $\gs$ with a point $v$ such that the join coordinate $t$ is $\neq 1$.  The homeomorphism from the open cone to $[0,\infty)^\cb$ is defined by $[t,x]\to r(t)\theta_\cb(x)$.  

\begin{proof}[Proof of Lemma~\ref{l:cell}]
Suppose $\ga\in \flag(\cp)$.  Then $\ga=\{T_0<\cdots <T_{k+l}\}$, where $T_k\in \cs$ and $T_{k+1}\notin \cs$.  Put $\ga':=\{T_0<\cdots <T_{k}\}$ and $\ga'':=\{T_{k+1}<\cdots <T_{k+l}\}$.  The simplex $\gs_{\ga'}$ lies in $B_{T_k}$ while the simplex $\gs_{\ga''}$ is in the nonspherical face $\gD_{T_{k+1}}$.  We have $\gs_\ga=\gs_{\ga'}*\gs_{\ga''}$ and a point in $\gs_\ga$ has coordinates $[t,x,y]$ where $t\in [0,1]$, $x\in \gs_{\ga'}$ and $y\in \gs_{\ga''}$.  The points in $\gs_{\ga}-\gs_{\ga''}$ are those where the join coordinate $t$ is $\neq 1$.  The identification $\gs_{\ga}-\gs_{\ga''} \to \gs_{\ga'} \times [0,\infty)^{\ga''}$ is given by $[t,x,y]\to (x,r(t)\theta_{\ga''}(y))$.
\end{proof}

Next we want to consider the link of the central vertex $v_\emptyset$ (corresponding to $\emptyset$) in this cell structure.  Let $\gD^{op}$ denote the simplex on $S$.  The nerve $L$ is a subcomplex of $\gdo$.  If $W$ is spherical, then $L=\gdo$, while if $W$ is infinite, then $L$ is a subcomplex of $\partial \gdo$.   Moreover, $\partial\gdo$ is a triangulation of the $(n-1)$-sphere, for $n=|S|$.  Assume $W$ is infinite. The link of $v_\emptyset$ in $\gD^f$ is a certain subdivision $L'$ of $\partial\gdo$, which we shall now describe.  The vertex set of $L'$ is the disjoint union $S\cup (\cp-\cs)$.  There are three types of simplices in $L'$:
\begin{enumerate1}
\item
simplices $\gs_T$ in $L$ corresponding to spherical subsets $T\in \cs_{>\emptyset}$,
\item
simplices $\gs_\ga$ in $b\partial\gD$ ($=b\partial\gdo$) corresponding to flags $\ga\in \flag (\cp-\cs)$,
\item
joins $\gs_T*\gs_\ga$, with $T\in \cs_{>\emptyset}$, $\ga\in \flag (\cp-\cs)$ and $T<\min \ga$.
\end{enumerate1}

\begin{lemma}\label{l:subdivision}
$L'$ is a subdivision of $\partial \gdo$ and $L\subseteq L'$ is a full subcomplex.
\end{lemma}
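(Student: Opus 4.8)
The statement to prove is that the explicitly described complex $L'$ subdivides $\pgdo$ and contains $L$ as a full subcomplex. The guiding picture is that $L'$ is obtained from $\pgdo$ by barycentrically subdividing each of its non-spherical faces while leaving the spherical faces intact; this makes sense because the simplices of $\pgdo$ are exactly the nonempty proper subsets of $S$, those not belonging to $L$ are exactly the members of $\cp-\cs$ (all nonempty, since $\emptyset\in\cs$), and $\cp-\cs$ is an up-set in the face poset of $\pgdo$: if $U\subseteq U'$ are proper subsets of $S$ with $U'\in\cs$, then $W_U\le W_{U'}$ is finite, so $U\in\cs$, i.e.\ a proper superset of a non-spherical set is non-spherical. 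Concretely one stars, in non-increasing order of dimension, at the barycenter $\widehat U$ of each $U\in\cp-\cs$, and the simplices of the result are the joins $\gs_T*\gs_\ga$ with $T\in\cs$ (possibly empty), $\ga$ a chain in $\cp-\cs$ (possibly empty), and $T\subsetneq\min\ga$ when $\ga\neq\emptyset$ --- i.e.\ exactly $L'$. (By Lemma~\ref{l:cell} the same complex is the link of $v_\emptyset$ in the cell structure on $\gD^f$, which is why the lemma belongs here: the cells of $\gD^f$ at $v_\emptyset$ are the $B_T$ and the $B_T\times[0,\infty)^\ga$, contributing to $\Lk(v_\emptyset)$ the simplices $\gs_T$ and $\gs_T*\gs_\ga$.)

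To make this rigorous I would first note that the three families of simplices are closed under passing to faces --- if $T'\subseteq T$ and $\ga'\subseteq\ga$ with $T\subsetneq\min\ga$, then $T'\subseteq T\subsetneq\min\ga\subseteq\min\ga'$ --- so $L'$ is an honest abstract simplicial complex. Next, realize $|L'|$ inside $|\pgdo|$ by sending the vertex $s\in S$ to the vertex $s$ of $\pgdo$, the vertex $U\in\cp-\cs$ to the barycenter $\widehat U$ of the face $U$, and extending linearly over each simplex. Every $\gs_T*\gs_\ga$ is then carried into the closed face of $\pgdo$ spanned by $\max\ga$ (or by $T$ if $\ga=\emptyset$), since each $s\in T$ satisfies $s\in T\subseteq\min\ga\subseteq\max\ga$ and each barycenter $\widehat U$ with $U\in\ga$ lies in that face. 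It remains to see this map is a homeomorphism, for which I would induct on the faces $U$ of $\pgdo$ (on $|U|$): writing $L'|_{\overline U}$ for the subcomplex of simplices $\gs_T*\gs_\ga$ with $T\subseteq U$ and $V\subseteq U$ for all $V\in\ga$, one has $L'|_{\overline U}=\overline U$ un-subdivided when $U\in\cs$, while if $U\notin\cs$ a case split on whether $U\in\ga$ yields $L'|_{\overline U}=\widehat U*(L'|_{\partial\overline U})$, the cone from the interior point $\widehat U$ over the (inductively) subdivided boundary sphere, hence a subdivision of $\overline U$. These subdivisions agree over intersections of faces, so gluing over the facets $S\setminus\{s\}$ of $\pgdo$ shows $L'$ is a subdivision of $\pgdo$.

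Finally, $L$ is the subfamily of $L'$ consisting of the simplices with $\ga=\emptyset$, so it is a subcomplex; it is full because the vertices of $L'$ not in $S$ are the barycenters $\widehat U$ with $U\in\cp-\cs$, and no such $U$ is a singleton (singletons are spherical). Thus any simplex of $L'$ all of whose vertices lie in $S$ must have $\ga=\emptyset$, i.e.\ is some $\gs_T$ with $T\in\cs$, a simplex of $L$.

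The one genuinely delicate point is the structural identity $L'|_{\overline U}=\widehat U*(L'|_{\partial\overline U})$ for non-spherical $U$, together with the compatibility of the induced subdivisions over intersections of faces (equivalently: that iterated starring at barycenters in non-increasing dimension is well defined and produces precisely the listed simplices). This is standard PL subdivision theory; the up-set property, the face-containment estimate, closure under faces, and fullness are all routine.
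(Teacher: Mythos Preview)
Your proof is correct and follows essentially the same strategy as the paper: both build the subdivision inductively over the non-spherical faces $U\in\cp-\cs$, defining the subdivision of $\overline U$ as the cone from the barycenter $\widehat U$ over the already-subdivided boundary, and then identify the resulting simplices with the three listed types. Your write-up is in fact more detailed than the paper's (you explicitly verify the up-set property of $\cp-\cs$, closure under faces, and give a concrete reason for fullness via the observation that singletons are spherical), but the underlying argument is the same.
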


\begin{proof}
Suppose $U$ is a minimal element of $\cp-\cs$.  Let $\gs_U$ denote the corresponding simplex in $\partial\gdo$.  Introduce a ``barycenter'' $v_U\in \pgdo$ and then subdivide $\gs_U$ to a new simplicial complex $(\gs_U)'$ by coning off the simplices in $\partial \gs_U$.  Each new simplex will have the form $v_U*\gs_T$ for some $T\subset U$.

Next, let $U$ be an arbitrary element of $\cp-\cs$ and suppose by induction that we have defined the subdivision of  $(\gs_{U'})$ for each $U'\subset U$ and hence, a subdivision $(\partial \gs_U)'$ of $\partial \gs_U$.  Introduce a barycenter $v_U$ of $\gs_U$ and subdivide by coning off $(\partial \gs_U)'$.  Each new simplex will have the form $v_U*\gs_{U'}$ for some $U'\subset U$.  In other words, each new simplex will be either of type 2 or type 3 above.  It is clear that the subcomplex $L$ is full.
\end{proof}

The following lemma is also clear.

\begin{lemma}\label{l:link}
$L'$ is the link of the central vertex in the piecewise Euclidean cell structure on $\gD^f$. 
\end{lemma}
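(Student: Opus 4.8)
The plan is to describe the cell structure of $\gD^f$ in a neighbourhood of the central vertex $v_\emptyset = x_0$ explicitly, using Lemma~\ref{l:cell}, and then read off the link cell by cell. First I would list the cells of the decomposition of Lemma~\ref{l:cell} whose closure contains $v_\emptyset$. A reflecting face $B_{T,V}$ with $V\neq\emptyset$ lies in the face of the fundamental simplicial cone fixed by $W_V$, while $x_0$ lies in the interior of that cone; hence $x_0\notin B_{T,V}$, whereas $x_0$ is the vertex ``$\emptyset$'' of $B_T=B_{T,\emptyset}$ for every $T\in\cs$. Consequently $v_\emptyset=(x_0,0)$ lies in the closure of $c(T,V,\ga)=B_{T,V}\times[0,\infty)^\ga$ exactly when $V=\emptyset$. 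So the cells meeting $v_\emptyset$ are precisely the $B_T$ ($T\in\cs$) and the $B_T\times[0,\infty)^\ga$ ($T\in\cs$, $\ga\in\flag(\cp-\cs)$, $T<\min\ga$), and $\Lk(v_\emptyset,\gD^f)$ is the union of the links $\Lk(v_\emptyset,C)$ over these cells, glued along the links of their common faces.

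Next I would compute those links. For $C=B_T$: since $K=\bigcup_{T\in\cs}B_T$ and, as recalled in the review of the Moussong metric, the link of $v_\emptyset$ in $K$ is the nerve $L$ with its piecewise spherical metric (the edge on $\{s,t\}$ having length $\pi-\pi/m_{st}$), we obtain $\Lk(v_\emptyset,B_T)=\gs_T$ (the empty complex when $T=\emptyset$). For $C=B_T\times[0,\infty)^\ga$: the link of a point in a metric product is the spherical join of the links in the two factors, and $[0,\infty)^\ga$ is the standard simplicial cone in $\rr^\ga$, whose link at $0$ is the all right spherical simplex $\sphere_+(\rr^\ga)$; so $\Lk(v_\emptyset,C)=\gs_T*\sphere_+(\rr^\ga)$. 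Using the homeomorphism $\theta_\ga$ of \eqref{e:theta} to identify $\sphere_+(\rr^\ga)$ with the simplex $\gs_\ga$ on vertex set $\ga$ in $b\partial\gD$, this link becomes $\gs_\ga$ when $T=\emptyset$ and $\gs_T*\gs_\ga$ when $T\neq\emptyset$. These are exactly the simplices of types~$2$ and $3$ in Lemma~\ref{l:subdivision}, the type~$1$ simplices being the links $\gs_T$ of the cells $B_T$; thus the cells of $\Lk(v_\emptyset,\gD^f)$ are in bijection with the simplices of $L'$.

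It then remains to check that these link cells are assembled in $\Lk(v_\emptyset,\gD^f)$ exactly as the corresponding simplices are assembled in $L'$. The face relations among the cells meeting $v_\emptyset$ are: $B_{T'}$ is a face of $B_T$ iff $T'\subseteq T$; $B_{T'}\times[0,\infty)^{\ga'}$ is a face of $B_T\times[0,\infty)^\ga$ iff $T'\subseteq T$ and $\ga'\subseteq\ga$; and $B_{T'}$ is a face of $B_T\times[0,\infty)^\ga$ iff $T'\subseteq T$ (these match the order that $\cf$ induces on this sub-collection of cells). Under the identifications above these are precisely the face relations between the simplices of types $1$--$3$ of $L'$, and the cone coordinate $[t,x,y]\mapsto(x,r(t)\theta_{\ga''}(y))$ of the proof of Lemma~\ref{l:cell} makes the piecewise spherical structures — $\gs_T$ carrying the Moussong metric and $\sphere_+(\rr^\ga)$ being all right — agree along shared faces. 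Hence $\Lk(v_\emptyset,\gD^f)=L'$ as piecewise spherical complexes. The hard part, and the only genuinely non-formal point, is this last bookkeeping: confirming that the three identifications ($\Lk(v_\emptyset,K)\cong L$, the formula for links of metric products, and $\theta_\ga$) are mutually compatible along common faces. Everything else is a direct reading of Lemmas~\ref{l:cell} and \ref{l:subdivision} together with the standard behaviour of links under metric products.
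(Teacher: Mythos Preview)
Your argument is correct and is precisely the kind of verification the paper has in mind: the paper itself gives no proof of this lemma, introducing it with ``The following lemma is also clear,'' so your write-up simply supplies the details the authors omitted. Your identification of the cells through $v_\emptyset$, the product formula for links, and the matching of face relations with the three simplex types of $L'$ are all on target.
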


The piecewise Euclidean metric induces a piecewise spherical metric on $L'$ extending the given metric on $L$.  Since the link of the origin in $[0,\infty)^{k+1}$ is the all right spherical $k$-simplex we get the following description of the metric on $L'$.

\begin{lemma}\label{l:ps}
The  simplices in $L'$ have spherical metrics of the following types.
\begin{enumerate1}
\item
For $T\in \cs_{>\emptyset}$, the simplex $\gs_T$ in $L$ is the dual to the fundamental simplex for $W_T$ on the unit sphere in $\rr^T$.  In other words, for distinct elements $s$, $t$ in $T$, the edge corresponding to $\{s,t\}$ has length $\pi-\pi/m_{st}$.
\item
The simplex $\gs_\ga$ corresponding to $\ga\in \flag(\cp-\cs)$ has its all right structure.  In other words, each edge of $\gs_\ga$ has length $\pi/2$.
\item
The simplex $\gs_T*\gs_\ga$ has the structure of a spherical join.  In other words, the length of an edge connecting a vertex in $\gs_T$ to one in $\gs_\ga$ is $\pi/2$.
\end{enumerate1}
\end{lemma}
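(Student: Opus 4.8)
The plan is to read the metric on each simplex of $L'$ directly off the product structure of those cells of $\gD^f$ that contain the central vertex $v_\emptyset$, using the fact recorded just above the statement that the link of the origin in $[0,\infty)^{k+1}$ is the all right spherical $k$-simplex. The first step is to pin down which cells these are. Since $v_\emptyset$ corresponds to the point $x_0$ in the interior of the fundamental simplicial cone, it lies on no reflecting face $B_{T,V}$ with $V\neq\emptyset$; hence by Lemma~\ref{l:cell} the cells through $v_\emptyset$ are exactly the $c(T,\emptyset,\ga)=B_T\times[0,\infty)^\ga$ with $T\in\cs$ and $\ga\in\flag(\cp-\cs)$, $T<\min\ga$ (allowing $\ga=\emptyset$, in which case the cell is the Coxeter block $B_T$). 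Under the identification of Lemma~\ref{l:link}, such a cell cuts out the simplex $\gs_T$ (when $\ga=\emptyset$), $\gs_\ga$ (when $T=\emptyset$), or $\gs_T*\gs_\ga$ in general, and the spherical metric on it is the metric on the link of $v_\emptyset$ inside $B_T\times[0,\infty)^\ga$.

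Next I would invoke the elementary fact that, for a point $(p,q)$ of a metric product $X\times Y$ of piecewise Euclidean complexes, $\Lk((p,q),\,X\times Y)$ is the piecewise spherical join $\Lk(p,X)*\Lk(q,Y)$. Applied with $X=B_T$, $p=x_0$, $Y=[0,\infty)^\ga$ carrying the product metric, and $q=0$, this identifies the metric on $\gs_T*\gs_\ga$ with the spherical join of the metrics on $\gs_T=\Lk(x_0,B_T)$ and $\gs_\ga=\Lk(0,[0,\infty)^\ga)$. By the very definition of the spherical join, every edge joining a vertex of $\gs_T$ to a vertex of $\gs_\ga$ then has length $\pi/2$, which is part~3.

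It remains to describe the two join factors. For $\gs_\ga$ the relevant cell is the standard simplicial cone $[0,\infty)^\ga$ of dimension $|\ga|$, whose link at the cone point is the all right spherical simplex $\sphere_+(\rr^\ga)$; so every edge of $\gs_\ga$ has length $\pi/2$, giving part~2. For $\gs_T$, the block $B_T$ is the intersection of the Coxeter cell $P_T$ with the fundamental cone, $\partial P_T$ is dual to the Coxeter complex of $W_T$, and $\Lk(x_0,B_T)$ is a fundamental chamber of that dual complex on the unit sphere in $\rr^T$; its edge corresponding to $\{s,t\}$ has length $\pi-\pi/m_{st}$, exactly as in the Moussong metric on the nerve $L$ recalled at the start of the section. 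This is part~1, and since the cells listed above exhaust the cells through $v_\emptyset$, the three cases together describe the metric on all of $L'$.

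The argument is entirely bookkeeping, and I expect no real obstacle: the only point that needs a moment's care is the compatibility of the two descriptions of $\gs_T$ — as a face of the nerve $L$ with its Moussong metric, and as the link $\Lk(x_0,B_T)$ in the new cell structure — but this compatibility is built into the construction of the Moussong metric, since that metric on $L$ was defined precisely as the link of $x_0$ in $K$, and $B_T=\vert\cs_{\le T}\vert$ is the part of $K$ meeting $x_0$.
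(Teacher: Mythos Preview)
Your proposal is correct and follows exactly the approach the paper intends: the paper gives no separate proof of this lemma, stating it as an immediate consequence of the product cell structure and the observation (recorded just before the lemma) that the link of the origin in $[0,\infty)^{k+1}$ is the all right spherical simplex. You have simply spelled out the bookkeeping behind that sentence.
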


\begin{remark}\label{r:mfld}
The simplicial complex $L'$ is the nerve of a Coxeter group $(W',S')$ which contains $W$ as a special subgroup.  Namely, $S'$ is the disjoint union, $S \cup (\cp-\cs)$.  Two generators of $S$ are related as before.  If $U$, $V\in \cp-\cs$, then put $m(U,V):=2$ whenever $U\subset V$ or $V\subset U$ and $m(U,V):=\infty$, otherwise.  Similarly, if $s\in S$ and $U\in\cp-\cs$, then $m(s,U)=m(U,s)=2$ when $s\in U$ and it is $=\infty$, otherwise.  Since $L'$ is a triangulation of $S^{n-1}$, with $n=|S|$, this shows that any $n$ generator Coxeter group is a special subgroup of a Coxeter group which acts cocompactly on a contractible $n$-manifold (such a Coxeter group is said to be \emph{type} $HM^n$ in \cite{dbook}).
\end{remark}

\noindent
\textbf{$\boldsymbol{\cat(0)}$ and $\boldsymbol{\cat(1)}$ metrics}.  Gromov \cite{gromov} proved that a piecewise Euclidean metric on a polyhedron $Y$ is locally  $\cat(0)$ if and only if the link in $Y$ of each cell is $\cat(1)$.  In his proof that $L(W,S)$ was $\cat(1)$, Moussong \cite{moussong} gave a criteria for certain piecewise spherical structures on simplicial complexes to be $\cat(1)$.  We recall his criteria below.  A spherical simplex \emph{has size $\ge \pi/2$} if each of its edges has length $\ge \pi/2$.  A spherical simplex $\gs\subset \sphere^k\subset \rr^{k+1}$ with vertex set $\{v_0,\dots,v_{k+1}\}$ and edge lengths $l_{ij}:=\cos^\minus(v_i\cdot v_j)$ is determined up to isometry by the $(l_{ij})$.  Conversely, a symmetric $(k+1)\times (k+1)$ matrix $(l_{ij})$ of real numbers in [$0,\pi)$ can be realized as the set of edge lengths of a spherical simplex if and only if the matrix $(\cos l_{ij})$ is positive definite, cf. \cite[Lemma I.5.1, p.~513]{dbook}.  Suppose $N$ is a simplicial complex with a piecewise spherical structure (i.e., each simplex has the structure of a spherical simplex).  $N$ \emph{has size $\ge \pi/2$} if each of its simplices does.  $N$ is a \emph{metric flag complex} if it satisfies the following condition:  given any collection of vertices $\{v_0,\dots, v_k\}$ which are pairwise connected by edges, then $\{v_0,\dots, v_k\}$ is the vertex set of a simplex in $N$ if and only if the matrix of edge lengths $(l_{ij})$ can be realized as the matrix of edge lengths of an actual spherical simplex.  (In other words, if and only if  $(\cos l_{ij})$ is positive definite.)  Moussong's Lemma is the following.

\begin{lemma}\label{l:mlemma}
\textup{(Moussong's Lemma, \cite{moussong} or \cite[Appendix I.7]{dbook})}. 
Suppose a piecewise spherical simplicial complex $N$ has size $\ge \pi/2$.  Then $N$ is $\cat(1)$ if and only if it is a metric flag complex.
\end{lemma}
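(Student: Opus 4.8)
The plan is to prove both directions by induction on $\dim N$, reducing the local question to a statement about links (via Gromov's criterion) and isolating the global question ("no short closed local geodesic") as a separate combinatorial step. Throughout I assume, as holds in all applications, that $N$ has finitely many isometry types of simplices, so that its piecewise spherical metric is complete and geodesic. I would first recall two standard facts about such complexes: (a) the piecewise spherical analogue of Gromov's Lemma (Lemma~\ref{l:mlemma} is about $\cat(1)$ links, but the reduction itself is Gromov's): $N$ is locally $\cat(1)$ if and only if $\Lk(\gs,N)$ is $\cat(1)$ for every simplex $\gs$ of $N$; and (b) a complete, locally $\cat(1)$ length space is $\cat(1)$ if and only if it contains no closed local geodesic of length $<2\pi$. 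Granting these, it suffices, for the hard ("if") direction, to establish: (1) the conditions "size $\ge\pi/2$" and "metric flag" pass to every vertex link $\Lk(v,N)$ — whence by induction and (a), $N$ is locally $\cat(1)$; and (2) under these hypotheses $N$ has no closed local geodesic of length $<2\pi$. The "only if" direction will come out of the same machinery.

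For step (1) the tool is linear algebra of Gram matrices. If $\gs$ is a spherical simplex with Gram matrix $G=(\cos l_{ij})$ (vertices as unit vectors), then the vertex link $\Lk(v_0,\gs)$ is again a spherical simplex whose Gram matrix of cosines of dihedral angles at $v_0$ is, after diagonal rescaling, the Schur complement of the $v_0$-entry of $G$; hence $G$ is positive definite if and only if that dihedral Gram matrix is. If moreover $l_{ij}\ge\pi/2$ for all $i\ne j$, then $G$ is a positive-definite symmetric $Z$-matrix, i.e.\ a Stieltjes matrix, so $G^{-1}\ge 0$ entrywise; since the dihedral angles of $\gs$ are read off (with the right sign) from $G^{-1}$, all of them are $\ge\pi/2$. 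Now observe that the $2$-simplices of $N$ through $v_0$ are exactly the edges of $\Lk(v_0,N)$, with edge lengths the corresponding angles at $v_0$; combining this with the Schur complement identity shows that positive-definiteness of the would-be Gram matrix of a family $\{v_0,w_1,\dots,w_m\}$ in $N$ matches positive-definiteness of the would-be dihedral Gram matrix of $\{w_1,\dots,w_m\}$ in $\Lk(v_0,N)$. Therefore $\Lk(v_0,N)$ is again a metric flag complex, and by the Stieltjes observation it again has size $\ge\pi/2$; by induction it is $\cat(1)$, and the same reasoning for higher links plus (a) gives that $N$ is locally $\cat(1)$.

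Step (2) is the heart of Moussong's argument and I expect it to be the main obstacle. I would argue by induction on $\dim N$. The $1$-dimensional case is a girth computation: if an embedded triangle existed in a metric flag graph of size $\ge\pi/2$, its three edge lengths would lie in $[\pi/2,\pi)$, so the triangle inequalities hold automatically and the $3\times 3$ Gram matrix would be positive definite, forcing the triangle to be filled — impossible in dimension $1$; hence every embedded cycle has length $\ge 2\pi$, and cycles are the only closed local geodesics. For the inductive step, let $\gamma$ be a closed local geodesic; if $\gamma$ lies in a proper skeleton we conclude by induction, so assume $\gamma$ meets the interior of a top simplex. One then studies the cyclic sequence of carriers $\gs_1,\dots,\gs_r$ of $\gamma$ and the faces $F_i=\gs_i\cap\gs_{i+1}$ it crosses: at an interior point $x$ of $F_i$ the link is $\sphere^{\dim F_i-1}*\Lk(F_i,N)$, and the local geodesic condition says the incoming and outgoing directions of $\gamma$ there are $\ge\pi$ apart. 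Since $\gs_i\cup\gs_{i+1}$ does not span a simplex, the directions of $\gs_i$ and $\gs_{i+1}$ are non-adjacent in $\Lk(F_i,N)$, and the metric flag hypothesis (together with size $\ge\pi/2$, which makes all vertex-to-vertex distances $\ge\pi/2$, again by the Stieltjes estimate) controls how $\gamma$ may turn: it cannot cut a corner across a low-dimensional face, and each arc it spends inside a size-$\ge\pi/2$ simplex contributes enough length. Summing these local constraints around $\gamma$ forces $\ell(\gamma)\ge 2\pi$. The indispensability of the flag condition is precisely this no-corner-cutting control; turning the "each arc is long enough / the turns cost enough angle" bookkeeping into a rigorous inequality is the main technical hurdle.

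Finally, the "only if" direction: suppose $N$ is $\cat(1)$ and has size $\ge\pi/2$ but is not a metric flag complex, and choose a minimal "empty simplex" $\{v_0,\dots,v_k\}$ — vertices pairwise joined by edges with positive-definite would-be Gram matrix but not spanning a simplex. Then $\Lk(v_0,N)$ is $\cat(1)$ (a link of a $\cat(1)$ complex), has size $\ge\pi/2$, and by the Schur complement contains a minimal empty simplex of dimension one less; iterating, we reduce to $k=2$, i.e.\ an "empty triangle" sitting inside some $\cat(1)$ complex of size $\ge\pi/2$. Since size $\ge\pi/2$ forces all vertex-to-vertex distances to be $\ge\pi$ for non-adjacent vertices and $\ge\pi/2$ for adjacent ones, the two corner paths of this triangle are genuine local geodesics, so the triangle is a closed local geodesic whose length equals the perimeter of a nondegenerate spherical triangle, hence is $<2\pi$ — contradicting fact (b). This completes the scheme.
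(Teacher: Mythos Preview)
The paper does not prove this lemma: it is stated as ``Moussong's Lemma'' with citations to \cite{moussong} and \cite[Appendix~I.7]{dbook}, and no argument is given. So there is no ``paper's own proof'' to compare against; the result is quoted as a black box and immediately applied to the link $L'$ in Lemma~\ref{l:L'}.

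That said, your outline is broadly the shape of Moussong's actual argument in the cited sources: pass to links via the Gram/Schur-complement computation (your step~(1) is correct, and the Stieltjes observation is exactly how one sees that size $\ge\pi/2$ is inherited), then invoke the local-to-global principle and rule out short closed geodesics. Where your sketch is honest about being incomplete --- step~(2), the ``no closed local geodesic of length $<2\pi$'' part --- is indeed the substantive content of Moussong's thesis; the bookkeeping you allude to is made precise there (and in \cite[Appendix~I]{dbook}) via a careful analysis of how a local geodesic meets the open stars of vertices, together with a lower bound on the length of each such segment. Your ``only if'' sketch also needs a bit more care: to conclude that the boundary of an empty triangle is a closed local geodesic you must check that the turning angle at each vertex is $\ge\pi$ in the vertex link, which again uses the size hypothesis and the fact that the opposite edge is absent. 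None of this is wrong in spirit, but a complete write-up would essentially reproduce the cited appendix; for the purposes of this paper, citing the result is the intended move.
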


The next result follows immediately from our previous description of the piecewise spherical complex $L'$.

\begin{lemma}\label{l:L'}
$L'$ has size $\ge \pi/2$ and is a metric flag complex.
\end{lemma}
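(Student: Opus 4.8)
The plan is to read both assertions off the explicit description of $L'$ established in Lemmas~\ref{l:subdivision}--\ref{l:ps}. The inequality ``size $\ge\pi/2$'' is immediate from Lemma~\ref{l:ps}: an edge of type~1 has length $\pi-\pi/m_{st}\ge\pi/2$ because $m_{st}\ge 2$, while every edge of type~2 or~3 has length exactly $\pi/2$. It remains to verify the metric flag condition. Here the forward implication is automatic: if pairwise-adjacent vertices span a simplex $\gs$ of $L'$, then $\gs$ is by construction a genuine spherical simplex, so its matrix of edge lengths satisfies the realizability criterion recalled just before Lemma~\ref{l:mlemma}, i.e.\ the corresponding matrix $(\cos l_{ij})$ is positive definite. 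So the content is the converse.

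For the converse, let $\{v_0,\dots,v_k\}$ be vertices of $L'$ that are pairwise joined by edges, and write $T:=\{v_0,\dots,v_k\}\cap S$ and $\{U_0,\dots,U_l\}:=\{v_0,\dots,v_k\}\cap(\cp-\cs)$. First I would extract the adjacency relation of $L'$ from its three types of simplices (items~1--3 before Lemma~\ref{l:subdivision}): two vertices of $\cp-\cs$ are joined exactly when they are comparable; a vertex $s\in S$ and a vertex $U\in\cp-\cs$ are joined exactly when $s\in U$; and two vertices of $S$ are joined exactly when the corresponding two-element subset is spherical. Consequently the $U_i$ form a chain, say $\ga:=\{U_0<\dots<U_l\}\in\flag(\cp-\cs)$, and $s\in U_i$ for all $s\in T$ and all $i$, whence $T\subseteq\bigcap_i U_i=\min\ga$.

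The key point is then that, by Lemma~\ref{l:ps}, the matrix $(\cos l_{ij})$ for $\{v_0,\dots,v_k\}$ is block diagonal: its $T\times T$ block is $C_T:=(-\cos(\pi/m_{st}))_{s,t\in T}$ (with $1$'s on the diagonal), while all the remaining entries --- those among the $U_i$ and those between $T$ and the $U_i$ --- equal $\cos(\pi/2)=0$, so the complementary block is an identity matrix. Hence $(\cos l_{ij})$ is positive definite if and only if $C_T$ is. But $C_T$ is the cosine matrix of the Coxeter system $(W_T,T)$, so by the classical finiteness criterion for Coxeter groups (\cite{bourbaki}; equivalently, this is precisely the statement that the nerve $L$ is a metric flag complex, which is part of Moussong's theorem) $C_T$ is positive definite if and only if $W_T$ is finite, i.e.\ $T\in\cs$. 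Therefore, assuming $(\cos l_{ij})$ positive definite we obtain $T\in\cs$; and since $\min\ga\notin\cs$ this forces $T\subsetneq\min\ga$, i.e.\ $T<\min\ga$. Consequently $\{v_0,\dots,v_k\}$ is the vertex set of $\gs_T$ (when the $U_i$ are absent), of $\gs_\ga$ (when $T=\emptyset$), or of the join $\gs_T*\gs_\ga$ (otherwise), and in each case this is a simplex of $L'$. Thus $L'$ is a metric flag complex.

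I expect the only mildly delicate part to be the two bookkeeping steps in the second paragraph --- pinning down precisely which pairs of vertices of $L'$ are adjacent, and then checking that the off-diagonal block of $(\cos l_{ij})$ vanishes so that positive-definiteness splits off the finite-Coxeter-group factor $C_T$. Once that block-diagonal structure is in hand there is no genuine obstacle: the statement reduces to the standard positive-definiteness criterion for special subgroups together with the (already invoked) fact that $L$ is a metric flag complex.
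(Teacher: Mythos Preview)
Your proof is correct and is exactly the verification the paper has in mind: the paper simply asserts that the lemma ``follows immediately from our previous description of the piecewise spherical complex $L'$,'' and you have carefully written out that verification --- reading off the edge lengths from Lemma~\ref{l:ps} for the size condition, and reducing the metric flag condition to the block-diagonal structure of $(\cos l_{ij})$ together with Moussong's result that $L$ itself is a metric flag complex.
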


\begin{corollary}\label{c:L'}
$L'$ is $\cat(1)$.  Moreover, $L$ is a totally geodesic subcomplex.
\end{corollary}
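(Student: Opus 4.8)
The plan is to deduce this corollary directly from the two preceding lemmas together with Moussong's Lemma. By Lemma~\ref{l:L'}, the piecewise spherical simplicial complex $L'$ has size $\ge \pi/2$ and is a metric flag complex; so Lemma~\ref{l:mlemma} applies verbatim and yields that $L'$ is $\cat(1)$. That disposes of the first assertion with essentially no work.

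For the second assertion, I would argue that $L$ is totally geodesic in $L'$ in two steps. First, recall from Lemma~\ref{l:subdivision} that $L$ is a \emph{full} subcomplex of $L'$. Second, I would observe that $L$ inherits from $L'$ a piecewise spherical metric which, by Lemma~\ref{l:ps}(1), is exactly the Moussong metric on the nerve (edge lengths $\pi - \pi/m_{st}$), so $L$ itself has size $\ge \pi/2$ and is a metric flag complex; hence $L$ is $\cat(1)$ by Moussong's Lemma as well. The key point is then the general fact that a full subcomplex $L \subseteq L'$ which is locally convex is totally geodesic: in a $\cat(1)$ space, a complete, locally convex subspace of intrinsic diameter $< \pi$ (or, more carefully, a connected locally convex subspace) is convex, and a convex subcomplex is automatically totally geodesic. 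So the crux is to verify \emph{local convexity} of $L$ in $L'$, i.e.\ that for each simplex $\gs$ of $L$, the link of $\gs$ in $L$ is a convex (equivalently, by Moussong's Lemma again, metric-flag and size $\ge \pi/2$) subcomplex of the link of $\gs$ in $L'$. The link of a simplex $\gs_T$ of $L$ in $L'$ again has the "$L'$-type" structure described in Lemmas~\ref{l:ps} and~\ref{l:L'} (a nerve-part glued to an all-right part along full subcomplexes), and the link of $\gs_T$ in $L$ is precisely the nerve-part; fullness of $L$ in $L'$ passes to these links, and the size and metric-flag conditions are inherited, so each such link is $\cat(1)$ and sits as a full, hence locally convex, subcomplex. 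An induction on the dimension of $\gs$ then gives local convexity of $L$ in $L'$, and therefore $L$ is totally geodesic.

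The main obstacle I anticipate is making the "full + metric-flag $\Rightarrow$ locally convex $\Rightarrow$ totally geodesic" implication precise in the piecewise spherical setting, since convexity statements in $\cat(1)$ spaces require care about diameters $\ge \pi$. The clean way to handle this is to phrase everything in terms of links: totally geodesic is a local condition, so it suffices to check that the link of each point of $L$ in $L$ is a totally geodesic subset of the corresponding link in $L'$, and each such link is an honest $\cat(1)$ space of size $\ge \pi/2$ with the subcomplex full — at which point one can cite the standard fact (e.g.\ \cite[Appendix I.7]{dbook} or the discussion around Moussong's Lemma) that a full subcomplex of a metric flag complex of size $\ge \pi/2$ is totally geodesic. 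If one prefers to avoid even that, one can note directly from Lemma~\ref{l:ps} that every geodesic segment in $L'$ with endpoints in $L$ whose length is $< \pi$ stays in $L$ — because leaving $L$ would force passing through a vertex of $\cp - \cs$, and the all-right edge-length $\pi/2$ estimates on the $\gs_\ga$ and $\gs_T * \gs_\ga$ pieces show such a detour can only increase length — and this local statement is enough.
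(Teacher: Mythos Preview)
Your proposal is correct and takes essentially the same approach as the paper: the first assertion follows from Lemma~\ref{l:L'} and Moussong's Lemma, and the second from $L$ being a full subcomplex of $L'$ (Lemma~\ref{l:subdivision}). The paper's own proof is a single sentence---``The last sentence follows from the fact that $L$ is a full subcomplex''---treating the implication ``full subcomplex of a size $\ge\pi/2$ metric flag complex $\Rightarrow$ totally geodesic'' as a standard fact, whereas you spell out a justification; your elaboration is sound but more than the paper deems necessary.
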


\begin{proof}
The last sentence follows from the fact that $L$ is a full subcomplex.
\end{proof}

Since the link of any simplex in a metric flag complex of size $\ge\pi/2$ has the same properties (cf. \cite[Lemma 8.3]{moussong} or \cite[Lemma I.5.11]{dbook}), the link of any simplex in $L'$ is also $\cat(1)$.  Since the link in $\gD^f\cap \gD_V$ of any cell of the form $B_{T,V}\times [0,\infty)^\ga$ can be identified with the link of the corresponding simplex $\gs_T*\gs_\ga$ in $L'$, it follows that the link of each cell in $\gD^f$ is $\cat(1)$.


\begin{corollary}\label{c:cat0}
The piecewise Euclidean metric on $\cu(W,\gD^f)$ is $\cat(0)$.
\end{corollary}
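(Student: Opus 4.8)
The plan is to combine Gromov's criterion for local $\cat(0)$-ness with the Cartan--Hadamard theorem. Since $\gD^f$, and hence also $\cu(W,\gD^f)$, is built from only finitely many isometry types of cells (the compact blocks $B_{T,V}$ and the noncompact cells $B_{T,V}\times[0,\infty)^\ga$), its piecewise Euclidean metric is complete and geodesic. Moreover $\cu(W,\gD^f)$ is simply connected, being homeomorphic to Euclidean $n$-space (Section~\ref{s:geom}) --- equivalently, it deformation retracts onto the contractible complex $\cu(W,K)$. By Cartan--Hadamard it therefore suffices to prove that $\cu(W,\gD^f)$ is locally $\cat(0)$, and by Gromov's criterion this is equivalent to showing that the link of every cell of $\cu(W,\gD^f)$ is $\cat(1)$.

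I would then identify those links. Every cell of $\cu(W,\gD^f)$ has the form $w\cdot c$ for a cell $c$ of $\gD^f$ and $w\in W$; writing $V:=S(c)$, the local structure of the $\cu$-construction identifies the link of $w\cdot c$ with $\cu(W_V,\Lk(c,\gD^f))$, where $\Lk(c,\gD^f)$ carries the mirror structure induced from $\gD^f$. Here $W_V$ is finite, since by the very definition of $\gD^f$ every cell of $\gD^f$ lies only on spherical mirrors, so $S(c)\in\cs$; in particular $\cu(W_V,\Lk(c,\gD^f))$ is a finite complex. For the cells with $V=\emptyset$ (those lying on no mirror) there is no gluing and the link is simply $\Lk(c,\gD^f)$; the most important such cell is the central vertex $v_\emptyset$, whose link is $L'$, already shown to be $\cat(1)$ in Corollary~\ref{c:L'}.

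The heart of the matter is the cells lying on the reflecting mirrors, and two facts are needed. First, for every cell $c$ of $\gD^f$ the complex $\Lk(c,\gD^f)$, with its induced mirror structure, is a metric flag complex of size $\ge\pi/2$ whose mirrors are full subcomplexes: the discussion preceding the statement identifies $\Lk(c,\gD^f)$ with the link of a simplex in $L'$, and links in a metric flag complex of size $\ge\pi/2$ inherit those properties (\cite[Lemma 8.3]{moussong}, \cite[Lemma I.5.11]{dbook}), while fullness of the mirrors descends from the fullness of $L$ in $L'$ (Lemma~\ref{l:subdivision}). Second, the $\cu$-construction over a finite Coxeter group preserves this class: if $N$ is a metric flag complex of size $\ge\pi/2$ suitably mirrored by full subcomplexes, then $\cu(W_V,N)$ is again a metric flag complex of size $\ge\pi/2$ --- this is precisely the mechanism in Moussong's proof that $\cu(W,K)$ is $\cat(0)$, which I would quote from \cite{moussong} (or \cite[Chapter 12]{dbook}). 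Combining the two facts, Moussong's Lemma (Lemma~\ref{l:mlemma}) shows $\cu(W_V,\Lk(c,\gD^f))$ is $\cat(1)$ for every cell $c$ of $\gD^f$, so every cell link of $\cu(W,\gD^f)$ is $\cat(1)$; hence $\cu(W,\gD^f)$ is locally $\cat(0)$, and by the first paragraph it is $\cat(0)$.

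I expect the main obstacle to be the identification of the link of a cell lying on a reflecting mirror $\gD_V$ as $\cu(W_V,\Lk(c,\gD^f))$ with precisely the right induced mirror structure, together with the check that this mirror structure is full and compatible enough that the $\cu$-construction does not leave the metric-flag, size-$\ge\pi/2$ class. The cells in the interior of a chamber ($V=\emptyset$, link $=L'$ at the center) are dispatched outright by Corollary~\ref{c:L'}; all the genuine content is concentrated along the mirrors and rests on Moussong's combinatorial framework.
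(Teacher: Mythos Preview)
Your overall architecture (Gromov's link criterion plus Cartan--Hadamard, using contractibility of $\cu(W,K)$ for simple connectivity) matches the paper's exactly. The difference is in how the links are handled.

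The paper sidesteps the whole issue of cells on reflecting mirrors by \emph{re-cellulating} $\cu(W,\gD^f)$: the union of the $W_T$-translates of a Coxeter block $B_T$ is the full Coxeter cell $P_T$, so $\cu(W,\gD^f)$ is decomposed into translates of cells of the form $P_T$ or $P_T\times[0,\infty)^\ga$. In this coarser cellulation the reflecting faces $B_{T,V}$ with $V\neq\emptyset$ become interior to $P_T$ and disappear from the cell list; the link of every remaining cell is then literally the link of the corresponding cell $B_T$ or $B_T\times[0,\infty)^\ga$ in the single chamber $\gD^f$, i.e.\ the link of a simplex in $L'$, already known to be $\cat(1)$ by the paragraph following Corollary~\ref{c:L'}. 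No $\cu(W_V,-)$ construction is needed.

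Your route works, but the step you flag as the main obstacle is exactly where the paper's trick buys something. Your claim that ``the $\cu$-construction over a finite Coxeter group preserves the metric-flag, size-$\ge\pi/2$ class'' is not, in fact, how Moussong's argument in \cite{moussong} or \cite[Chapter~12]{dbook} proceeds: those proofs also use the Coxeter-cell re-cellulation, so the citation does not directly supply the lemma you want. What actually makes your link computation go through is the join decomposition $\Lk(c,\gD^f)=\Lk(\gs_d,L')*\gt(V)$ (the paper records this as Lemma~\ref{l:links}), whence $\cu(W_V,\Lk(c,\gD^f))=\Lk(\gs_d,L')*\sphere^V$, a spherical join of $\cat(1)$ spaces. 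So your approach can be completed, but the Coxeter-cell re-cellulation is the cleaner device and avoids having to analyze the mirror structure on $\Lk(c,\gD^f)$ at all.
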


\begin{proof}
The union of $W_T$-translates of  a Coxeter block $B_T$ in $\cu(W,\gD^f)$  is a Coxeter cell $P_T$ and the complete inverse image of $B_T$ in $\cu(W,\gD^f)$ is a disjoint union of copies of $P_T$.  Hence, $\cu(W,\gD^f)$ has a cell structure in which the cells are either translates of Coxeter cells of the form $P_T$ or translates of cells of the form $P_T\times [0,\infty)^\ga$ for some $\ga\in \flag((\cp-\cs)_{>T})$.  In either case the link of such a cell in $\cu(W,\gD^f)$ is identified with the link of the corresponding cell in $\gD^f$.  By Corollary~\ref{c:L'}, $\cu(W,\gD^f)$ is locally $\cat(0)$. 
A space is $\cat(0)$ if and only if it is locally $\cat(0)$ and simply connected (cf. \cite[p.~119]{gromov} or \cite[Ch.~II.4]{bh}).  $\cu(W,\gD^f)$ is contractible (hence, simply connected), since it is homotopy equivalent to $\cu(W,K)$.  Therefore, it is $\cat(0)$.
\end{proof}

We also want to consider links of cells of the form $B_{T,V}$ or $B_{T,V} \times [0,\infty)^\ga$ in various cell complexes.  (Here $B_{T,V}$ is a reflecting face of $B_T$.)

\begin{lemma}\label{l:links}
Suppose $c$ is a cell in $\gD^f$ of the form $c=B_{T,V}$ or $c=B_{T,V}\times [0,\infty)^\ga$ for some $V\subseteq T\in \cs_{>\emptyset}$ and $\ga\in\flag ((\cp-\cs)_{>T})$. Let $d=B_T$ or $B_T\times [0,\infty)^\ga$ be the corresponding larger cell in $\gD^f$ and let $\gs_d$ be the corresponding simplex in $L'$.  Then
\begin{align}
\Lk (c,\gD^f)&=\Lk(\gs_d,L')*\gt(V)\label{e:L1}\\
\Lk (c, \cu(W,\gD^f)&=\Lk (\gs_d,L')* \sphere^V\label{e:L2}
\end{align}
Here $\sphere^V$ is the unit sphere in the canonical representation of $W_V$ on $\rr^V$ and $\gt(V)\subset \sphere^V$ is the fundamental simplex. 

Moreover, suppose that $\Phi$ is a building of type $(W,S)$, that $R$ is the spherical residue of type $V$ containing the base chamber and that $\sphere(R):=\cu(R,\gt(V))$ is the spherical realization of $R$.  Then
\begin{equation}\label{e:L3}
\Lk (c, \cu(\Phi,\gD^f)=\Lk (\gs_d,L')* \sphere(R).
\end{equation}
\end{lemma}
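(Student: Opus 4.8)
The plan is to obtain \eqref{e:L3} from \eqref{e:L1} together with the defining gluing formula \eqref{e: defUPhi} for $\cu(\Phi,\gD^f)$, after pinning down the mirror data of the cell $c$ and the way the mirror structure of $\gD^f$ restricts to $\Lk(c,\gD^f)$. First I would determine $S(c)$. The reflecting face $B_{T,V}$ lies in the face $\gD_V$ of the fundamental simplicial cone, which is the fixed set of $W_V$; since $V\subseteq T\in\cs$, the subset $V$ is spherical, so $\gD_V\subseteq\gD^f$. As $\gD_V\subseteq\gD_s$ for every $s\in V$, we get $c\subseteq\gD_s$ for all $s\in V$; and for $s\notin V$ the cell $c$ is not contained in $\gD_s$, since in the $B_T$-directions $B_T=|\cs_{\le T}|$ meets the open star of $\gD_V$, and every subset occurring in a flag $\ga\in\flag((\cp-\cs)_{>T})$ contains $T\supseteq V$, so the $[0,\infty)^\ga$-directions point only into mirrors indexed by $V$. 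Hence $S(c)=V$. Consequently, by \eqref{e: defUPhi}, the copies of $c$ in the various chambers which are identified with the given cell $\gf_0\cdot c$ are exactly those coming from chambers $\gf'$ with $\gd(\gf_0,\gf')\in W_V$, i.e.\ from the $V$-residue $R$ of the base chamber; so the union of closed chambers of $\cu(\Phi,\gD^f)$ meeting $\gf_0\cdot c$ is $\cu(R,\gD^f)$, and
\[
\Lk\bigl(\gf_0\cdot c,\ \cu(\Phi,\gD^f)\bigr)\;=\;\cu\bigl(R,\ \Lk(c,\gD^f)\bigr),
\]
where $\Lk(c,\gD^f)$ carries its mirror structure over $V$ with $s$-mirror $\Lk(c,\gD^f\cap\gD_s)$. (Both equalities follow routinely from \eqref{e: defUPhi} and the fact that residues of $\Phi$ are sub-buildings; cf.\ \cite[Chapters 7, 18]{dbook}.)

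Next I would identify this mirrored space. By \eqref{e:L1}, $\Lk(c,\gD^f)=\Lk(\gs_d,L')*\gt(V)$, where $\Lk(\gs_d,L')$ is the link of $c$ in $\gD_V\cap\gD^f$ (the directions tangent to $\gD_V$) and $\gt(V)\subset\sphere^V$ is the normal link of $\gD_V$ in $\gD^f$, realized as the fundamental simplex of $W_V$. For $s\in V$ we have $\gD_V\subseteq\gD_s$, so every tangent-to-$\gD_V$ direction lies in $\gD_s$, while $\gD_s$ meets the normal fibre of $\gD_V$ transversally in the wall $\gt(V)_s$ fixed by the reflection $r_s$; hence $\Lk(c,\gD^f\cap\gD_s)=\Lk(\gs_d,L')*\gt(V)_s$. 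In other words, as a mirrored space over $V$, $\Lk(c,\gD^f)$ is the join of $\Lk(\gs_d,L')$ (which lies in every mirror) with $\gt(V)$ carrying its standard mirror structure as the model chamber for $\sphere^V=\cu(W_V,\gt(V))$. Since $\cu(R,A*B)=A*\cu(R,B)$ whenever the $s$-mirror of the join is $A*B_s$ for each $s$ — again immediate from \eqref{e: defUPhi} — this yields
\[
\Lk\bigl(\gf_0\cdot c,\ \cu(\Phi,\gD^f)\bigr)=\cu\bigl(R,\Lk(\gs_d,L')*\gt(V)\bigr)=\Lk(\gs_d,L')*\cu(R,\gt(V))=\Lk(\gs_d,L')*\sphere(R),
\]
which is \eqref{e:L3}. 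Specializing to $\Phi=W$ gives $R=W_V$ and $\cu(W_V,\gt(V))=\sphere^V$, so this is also exactly the proof of \eqref{e:L2}, a useful consistency check.

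The hard part is the geometry behind \eqref{e:L1} and its mirror-refinement: checking that $\Lk(c,\gD^f)$ really does split as $\Lk(\gs_d,L')*\gt(V)$ and, crucially for us, that this split is compatible with the mirror structure in the precise form $\Lk(c,\gD^f\cap\gD_s)=\Lk(\gs_d,L')*\gt(V)_s$, with no further identifications. This is a local statement about the Coxeter block $B_T$, its reflecting face $B_{T,V}$, and the cone directions $[0,\infty)^\ga$. I would handle it in two stages: first the compact cells $c=B_{T,V}$, where it reduces to the description of $B_T=|\cs_{\le T}|$ as a simple polytope together with the identification of $\gt(V)$ with the fundamental chamber of $W_V$ on $\sphere^V$; and then the observation that replacing $B_{T,V}$ by $B_{T,V}\times[0,\infty)^\ga$ only enlarges $\gs_d$ from $\gs_T$ to $\gs_T*\gs_\ga$ and appends directions lying in every mirror $\gD_s$, $s\in V$ (since each $U\in\ga$ contains $V$), hence contributing only to the $\Lk(\gs_d,L')$ factor and leaving the mirror bookkeeping untouched.
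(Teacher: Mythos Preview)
Your proof is correct and rests on the same geometric idea as the paper's: the link of $c$ splits as the join of its link inside the face $\gD^f_V$ (giving the $\Lk(\gs_d,L')$ factor) with the normal link of that face (giving $\gt(V)$, $\sphere^V$, or $\sphere(R)$). The paper establishes \eqref{e:L1} by writing this join directly as $\Lk(c,\gD^f)=\Lk(c,\gD^f_V)*\Lk(\gD^f_V,\gD^f)$ and then dismisses \eqref{e:L2} and \eqref{e:L3} with a single ``similarly,'' meaning one repeats the same tangent/normal join in $\cu(W,\gD^f)$ and $\cu(\Phi,\gD^f)$; you instead derive \eqref{e:L2} and \eqref{e:L3} from \eqref{e:L1} by tracking the mirror structure on $\Lk(c,\gD^f)$ and pushing it through the $\cu$-construction via $\cu(R,A*B)=A*\cu(R,B)$. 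This is a perfectly valid (and more explicit) way to unpack that ``similarly.''
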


\begin{proof}
Let $\gD^f_V$ denote the face of $\gD^f$ fixed by $W_V$ (cf. \eqref{e:T}. Then
\begin{align*}
\Lk(c,\gD^f)&= \Lk(c,\gD^f_V)* \Lk(\gD^f_V,\gD^f)\\
&=\Lk (d,\gD^f)* \gt(V)\\
&=\Lk(\gs_d,L')*\gt(V)
\end{align*}
and similarly for formulas \eqref{e:L2} and \eqref{e:L3}.
\end{proof}
We can now prove the main result of this section.

\begin{proof}[Proof of Theorem~\ref{t:cat}]
Any spherical building, such as $\sphere(R)$, is $\cat(1)$ (e.g., see \cite{dbuild}) and the spherical  join of two $\cat(1)$-spaces is $\cat(1)$ (e.g., see \cite{bh}). So, the theorem follows from \eqref{e:L3}.  Alternatively, it can  be proved from Corollary~\ref{c:cat0} by using the argument in \cite[\S 11]{dbuild}.
\end{proof}

\vspace{.2cm}

\noindent
\textbf{A variation}.  In Section~\ref{s:morse} we will need the following modification of the previous construction.  Given a subset $U\subseteq S$, we will define a new piecewise Euclidean metric on $\gD^f-\gD^U$ and then show that it induces $\cat(0)$ metrics on $\cu(W_{S-U}, \gD^f-\gD^U)$ and $\cu(R, \gD^f-\gD^U)$ for any $(S-U)$-residue $R$ of $\Phi$.

For each spherical subset $T$, let $C^*_T\subset \rr^T$ be the simplicial cone determined by the bounding hyperplanes of $B_T$ passing through the vertex $x_0$.  (In other words, $C^*_T$ is the dual cone to the fundamental simplicial cone.)  Let $\cs_{S-U}:=\cs(W_{S-U}, S-U)$ be the poset of spherical subsets of $S-U$ and let $L_{S-U}:=L(W_{S-U},S-U)$ be the nerve of $W_{S-U}$.  $\gD^f-\gD^U$ has a cell structure with cells of the following two types:
\begin{enumeratea}
\item
$B_{T,V}$, where $T\in \cs_{S-U}$ and $V\subseteq T$,
\item
$B_T\times [0,\infty)^\ga$, where $\ga\in \cp-\cs_{S-U}$ and $T,V$ are as above.
\end{enumeratea}
As before, each such cell is given the natural product metric.  
In  effect  we are putting $\gU(\gD) \cup \gD^U$ at infinity.

The piecewise Euclidean metric on $\gD^f-\gD^U$ induces one $\cu(R,\gD^f-\gD^U)$.  Let us describe the link, $L'_{S-U}$, of the central vertex in the new metric on $\gD^f-\gD^U$.  The vertex set of  $L'_{S-U}$ is $(S-U)\cup (\cp-\cs_{S-U})$.  As before, there are three types of simplices:
\begin{enumerate1}
\item
simplices $\gs_T$ in $L_{S-U}$ corresponding to spherical subsets $T\in \cs_{S-U}$,
\item
simplices $\gs_\ga$ in $b\partial\gD$ ($=b\partial\gdo$) corresponding to flags $\ga\in \flag (\cp-\cs_{S-U})$,
\item
joins $\gs_T*\gs_\ga$, with $T\in \cs_{S-U}$, $\ga\in \flag (\cp-\cs_{S-U})$ and $T<\min \ga$.
\end{enumerate1}
and $L'_{S-U}$ can be identified with a subdivision of $\pgdo$.  Moreover, just as before, $L'_{S-U}$ has size $\ge\pi/2$ and is a metric flag complex; hence, it  
is $\cat(1)$. This proves the following.

\begin{theorem}\label{t:cat2}
Suppose $U\subseteq S$ and $R$ is any $(S-U)$-residue in $\Phi$.  Then the piecewise Euclidean metric on $\cu(R,\gD^f-\gD^U)$, defined above, is $\cat(0)$.
\end{theorem}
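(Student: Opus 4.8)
The plan is to run the same argument used for Corollary~\ref{c:cat0} and Theorem~\ref{t:cat}, with the triple $(\cs,L',\gD^f)$ replaced throughout by $(\cs_{S-U},L'_{S-U},\gD^f-\gD^U)$: first show the piecewise Euclidean metric on $\cu(R,\gD^f-\gD^U)$ is locally $\cat(0)$ by checking that the link of every cell is $\cat(1)$, and then upgrade this to a global $\cat(0)$ statement via simple connectivity. Most of the first step is already at hand: the discussion preceding the theorem identifies the link of the central vertex with $L'_{S-U}$ and records that $L'_{S-U}$ has size $\ge\pi/2$ and is a metric flag complex, hence is $\cat(1)$ by Moussong's Lemma~\ref{l:mlemma}.

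For the other cells I would repeat the computation of Lemma~\ref{l:links}. A cell of $\cu(R,\gD^f-\gD^U)$ is a translate of one of the form $B_{T,V}$ or $B_{T,V}\times[0,\infty)^\ga$ with $T\in\cs_{S-U}$, $V\subseteq T$ and $\ga\in\flag((\cp-\cs_{S-U})_{>T})$; it lies over a cell $d=B_T$ or $B_T\times[0,\infty)^\ga$ carrying a simplex $\gs_d$ of $L'_{S-U}$, and splitting off the factor fixed by $W_V$ exactly as in the proof of Lemma~\ref{l:links} shows that the link of $c$ in $\cu(R,\gD^f-\gD^U)$ equals $\Lk(\gs_d,L'_{S-U})*\sphere(R')$, where $\sphere(R')$ is the spherical realization of a type-$V$ residue $R'$ of $R$ (note $V\subseteq T\subseteq S-U$, so residues of type $V$ live inside $R$). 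This link is $\cat(1)$: the link of a simplex in a metric flag complex of size $\ge\pi/2$ is again such a complex (\cite[Lemma 8.3]{moussong}, \cite[Lemma I.5.11]{dbook}), so $\Lk(\gs_d,L'_{S-U})$ is $\cat(1)$; $\sphere(R')$ is a spherical building, hence $\cat(1)$; and a spherical join of $\cat(1)$ spaces is $\cat(1)$. By Gromov's criterion \cite{gromov}, $\cu(R,\gD^f-\gD^U)$ is therefore locally $\cat(0)$.

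It remains to see that $\cu(R,\gD^f-\gD^U)$ is simply connected, which I would deduce from contractibility. Exactly as $\gD^f$ retracts onto $K$, the space $\gD^f-\gD^U$ admits a mirror-preserving deformation retraction onto its core $K_{S-U}:=|\cs_{S-U}|$, the Davis chamber of $(W_{S-U},S-U)$: as subcomplexes of $b\gD$ one checks $\gU(\gD)\cup\gD^U=|\cp-\cs_{S-U}|$, a full subcomplex whose deletion deformation retracts onto the full subcomplex spanned by the complementary vertex set $\cs_{S-U}$. This retraction is compatible with the mirrors $\{(\gD^f-\gD^U)\cap\gD_s\}_{s\in S-U}$, so it descends to a deformation retraction of $\cu(R,\gD^f-\gD^U)$ onto $\cu(R,K_{S-U})$; and since $R$ is a building of type $(W_{S-U},S-U)$, the space $\cu(R,K_{S-U})$ is its standard realization, which is contractible by \cite{dbuild}. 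Hence $\cu(R,\gD^f-\gD^U)$ is contractible, in particular simply connected, and a simply connected locally $\cat(0)$ space is $\cat(0)$ (cf.~\cite[p.~119]{gromov}, \cite[Ch.~II.4]{bh}). (When $U=\emptyset$ this is just Theorem~\ref{t:cat}, so one may assume $U\ne\emptyset$; recall also that $W$ is infinite throughout this section.) The only step with real content is this last one: I expect the main obstacle to be verifying mirror-compatibility of the retraction $\gD^f-\gD^U\to K_{S-U}$ and that it lands on $K_{S-U}$ rather than a larger subcomplex --- routine once the identification $\gU(\gD)\cup\gD^U=|\cp-\cs_{S-U}|$ is in place, since it is the verbatim extension of the retraction $\gD^f\to K$.
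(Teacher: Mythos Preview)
Your proposal is correct and follows essentially the same approach as the paper. The paper's ``proof'' is in fact just the sentence ``This proves the following'' after observing that $L'_{S-U}$ has size $\ge\pi/2$ and is a metric flag complex (hence $\cat(1)$); it is leaving to the reader precisely the link computations \`a la Lemma~\ref{l:links} and the contractibility/simple-connectivity step that you have written out, all of which are verbatim repetitions of the argument for Theorem~\ref{t:cat} with $\cs_{S-U}$ in place of $\cs$.
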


\section{Metric spheres in $\boldsymbol{\cu(\Phi, \gD^f)}$}\label{s:morse}

An $n$-dimensional  cell complex $X$ is $CM$ (for ``Cohen-Macaulay'') if $\wt{H}^*(X)$ is concentrated in degree $n$ and is a free abelian group in that degree.  Similarly, an $n$-dimensional, noncompact, contractible space $X$ is $SI$ (for ``spherical at infinity'') if $H^*_c(X)$ is concentrated in degree $n$ and is a free abelian group in that degree. We want to prove that $\cu(\Phi, \gD^f)$ is $SI$.  This is a consequence of Theorems \ref{t:bbm} and \ref{t:pi2} below.

Suppose $N$ is a $\cat(1)$, piecewise spherical polyhedron and that $p\in N$.  Let $B(p,\pi/2)\subseteq N$ denote the open ball of radius $\pi/2$ centered at $p$.  Define a space $PN_p$, called $N$ \emph{punctured at} $p$, by $PN_p:=N-B(p,\pi/2)$.  We are interested in this concept when $N=\Lk(c)$, the link of a cell $c$ in some $\cat(0)$ complex $X$.  In this case we will write $\plk_p(c)$ for $PN_p$ and call it the \emph{punctured link} of $c$ at $p$.

\begin{theorem}\label{t:bbm}
\textup{(Brady, McCammond and Meier \cite{bbm})}.
 Let $X$ be a $\cat(0)$, piecewise Euclidean cell complex (with finitely many shapes of cells). If for each
 cell $c$ in $X$
and for each $p\in \Lk(c)$, the spaces 
$\Lk(c)$ and $\plk_p (c)$ are
$(n-\dim c)$-acyclic, then $X$ is $n$-acyclic at infinity.  In particular, if $X$ satisfies this condition and  is $n$-dimensional,  then it is $SI$.
\end{theorem}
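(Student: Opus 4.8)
The plan is to prove Theorem~\ref{t:bbm} by discrete Morse theory for the distance function from a generic basepoint: the $\cat(0)$ geometry reduces $H^*_c(X)$ to a direct limit of reduced cohomology of metric spheres, and one then reads off how that cohomology changes from the links and punctured links.

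\textbf{Step 1: reduction to metric spheres.} Pick a basepoint $x_0$ in the interior of a top cell, in general position, and set $\rho:=d(x_0,\cdot)$. Since $X$ is $\cat(0)$ and (in the cases of interest) proper, geodesic contraction toward $x_0$ deformation retracts each superlevel set $X_{\ge r}:=\{\rho\ge r\}$ onto the metric sphere $S_r:=\{\rho=r\}$, carrying $S_s$ to $S_r$ for $s>r$; and the pairs $(X,X_{\ge r})$ compute $H^*_c(X)$ in the colimit. Because $X$ is contractible, the exact sequence of the pair gives $H^k(X,X_{\ge r})\cong\widetilde H^{k-1}(X_{\ge r})\cong\widetilde H^{k-1}(S_r)$, so $H^k_c(X)\cong\varinjlim_r\widetilde H^{k-1}(S_r)$ with the limit taken over the contraction maps $S_s\to S_r$. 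It therefore suffices to show $\varinjlim_r\widetilde H^{j}(S_r)=0$ for $j<n-1$ and that $\varinjlim_r\widetilde H^{n-1}(S_r)$ is free abelian -- which is the statement that $X$ is $n$-acyclic at infinity, and which yields the $SI$ property once $\dim X=n$ (the top $H^*_c$ then vanishing above $n$ for dimension reasons).

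\textbf{Step 2: a Morse lemma for $\rho$.} Treat $\rho$ as a Morse function; the finite-shapes hypothesis keeps its critical values discrete and its local structure uniform. A critical point is a point $p$ in the relative interior of a cell $c$ at which $\rho|_{\overline c}$ attains its minimum, so that the nearest point of $\overline c$ to $x_0$ is $p$ itself. The link of $p$ in $X$ is the piecewise-spherical $\cat(1)$ join $\Lk(c)*\sphere^{\dim c-1}$, the second factor recording directions inside $c$, and the direction $q\in\Lk(c)$ from $p$ toward $x_0$ lies in the first factor. One then checks the Morse-theoretic behaviour of the superlevel sets: as $r$ increases through an interval with no critical value, $X_{\ge r}$ (hence $S_r$ up to homotopy) does not change; as $r$ crosses a critical value with critical points $\{p_i\}\subset\{c_i\}$, $X_{\ge r}$ undergoes a local modification near each $p_i$ whose effect on reduced cohomology is governed, via a Mayer--Vietoris patch, by the pair $\bigl(\Lk(c_i)*\sphere^{\dim c_i-1},\ \plk_{q_i}(c_i)*\sphere^{\dim c_i-1}\bigr)$, where $\plk_{q_i}(c_i)=\Lk(c_i)\setminus B(q_i,\pi/2)$ is the punctured link collecting the ascending directions transverse to $c_i$.

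\textbf{Step 3: assembly, and the main obstacle.} Joining with $\sphere^{\dim c_i-1}$ raises reduced (co)homology by $\dim c_i$, so the hypothesis -- acyclicity of $\Lk(c)$ and of \emph{every} puncture $\plk_p(c)$ through one degree below the top dimension of the link -- is exactly calibrated so that every Mayer--Vietoris patch is invisible to $\widetilde H^j(S_r)$ for $j<n-1$, and in degree $n-1$ can only append a free abelian summand. Passing to the colimit over the maps $S_s\to S_r$ then gives $\varinjlim_r\widetilde H^j(S_r)=0$ for $j<n-1$, i.e.\ $H^k_c(X)=0$ for $k<n$, and leaves $\varinjlim_r\widetilde H^{n-1}(S_r)$ free abelian, i.e.\ $H^n_c(X)$ free; this is $n$-acyclicity at infinity, and the full $SI$ statement when $\dim X=n$. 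The real content, and the main obstacle, is the Morse lemma of Step~2: $\rho$ is not affine on the cells of $X$ -- it is even degenerate along a critical cell's own directions -- so one cannot invoke Bestvina--Brady discrete Morse theory verbatim but must establish directly that sub- and super-level sets of the $\cat(0)$ distance function behave like those of a genuine Morse function, the heart of which is the identification of ascending and descending links with metric $\pi/2$-balls (and their complements) inside the $\cat(1)$ links, \emph{uniformly in the basepoint} -- which is precisely why the hypothesis must range over the whole link $\Lk(c)$ together with all of its punctures $\plk_p(c)$. Granting that local picture, the remainder is routine bookkeeping with Mayer--Vietoris and filtered colimits.
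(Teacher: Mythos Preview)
Your proposal is correct and follows essentially the same approach as the paper. Note that the paper does not give a full proof of this theorem---it is quoted from \cite{bbm}---but the Remarks immediately following the statement sketch precisely the argument you give: Morse theory for the distance function $\rho=d(x_0,\cdot)$, critical points as closest points of cells to $x_0$, the link identification $\Lk(x)\cong\sphere^{k-1}*\Lk(c)$, the effect of crossing a critical value as replacing a contractible piece by a (suspended) punctured link, and the passage to $H^*_c(X)=\varinjlim H^*(B(r),S(r))$; your write-up is a more detailed version of that same outline.
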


\begin{remarks*}
In \cite{bbm} the hypothesis of the above theorem is that $X$ is the universal cover of a finite, nonpositively curved complex; however, the proof clearly works with a weaker hypothesis such as finitely many shapes of cells (which holds in our case).  The proof of the theorem uses Morse theory for polyhedral complexes.  Roughly, it goes as follows.  Let $\gr:X\to \rr$ be the distance from some base point $x_0$, i.e., $\gr(x):=d(x,x_0)$. The spheres $S(r)$ of radius $r$ centered at $x_0\in X$ are the level sets of $\gr$.  Call a point $x$ a \emph{critical point} (of $\gr$) if it is the closest point  to $x_0$ in some closed cell $c$.  It follows from the $\cat(0)$ hypothesis that the critical points are isolated.  If, for sufficiently small $\geps$, there is no critical point in the annular region between $S(r+\geps)$ and $S(r)$, then $S(r+\geps)$ and $S(r)$ are homeomorphic.  On the other hand, the effect of crossing a critical point $x\in S(r)$ is to remove a contractible neighborhood of $x$ in $S(r)$ and replace it by the punctured link $\plk(x)_p$ where $p$ is the direction at $x$ of the geodesic from $x$ to $x_0$.  (If $x$ lies in the relative interior of a $k$-dimensional cell $c$, then $\Lk(x)\cong \sphere^{k-1} * \Lk(c)$.)  So, the effect  on the homotopy type of the level sets is to replace a contractible neighborhood by a copy of a suspension of a  punctured link.  It follows that, under the hypotheses of Theorem~\ref{t:bbm}, each metric sphere is $CM$.  Since $X$ is $\cat(0)$, metric balls are contractible and since
$H^*_c(X)=\varinjlim H^*(B(r),S(r))$ is concentrated in the top degree, 
$X$ is $SI$.
\end{remarks*}

\begin{theorem}\label{t:pi2}
\textup{(Dymara--Osajda \cite{dymo} and Schulz \cite{someone})}.
Suppose $R$ is a spherical building of type $(W_T,T)$ and that $\sphere(R)$ ($:=\cu(R, \gt(T))$) is its spherical realization.  Then for any $p\in \sphere(R)$, the space $P\sphere (R)_p$ is $CM$.
\end{theorem}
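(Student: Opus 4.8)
The plan is to induct on the rank $n:=|T|$ of $R$ and, for the inductive step, to run polyhedral Morse theory (the technique recalled in the Remark following Theorem~\ref{t:bbm}) on the function ``distance from $p$'' on $\Delta:=\sphere(R)=\cu(R,\gt(T))$, a finite piecewise spherical $\cat(1)$ complex of dimension $n-1$; the claim is that $\wt{H}^{*}(P\Delta_p)$ is free abelian and concentrated in degree $n-1$. Two ingredients feed the induction. First, the Solomon--Tits theorem, which says that $\Delta$ itself is $CM$ of dimension $n-1$ --- indeed homotopy equivalent to a wedge of $(n-1)$-spheres once $n\ge 3$. Second, the inductive hypothesis applied to the links of cells of $\Delta$: from the definition of $\cu(R,\gt(T))$ one checks that the link of a cell of dimension $d$ is isometric to the spherical realization $\cu(\rho,\gt(V))$ of a residue $\rho$ of $R$ of type $V$ with $|V|=n-1-d<n$, i.e.\ to a spherical building of strictly smaller rank. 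The base cases $n\le 2$ are handled by hand: for $n=1$, $\Delta$ is a finite set of at least two points and $P\Delta_p$ is a nonempty finite set; for $n=2$, $\Delta$ is the $\cat(1)$ metric realization of a generalized $m$-gon, so a graph, and $P\Delta_p$ --- the complement of an open metric ball in a graph --- is again a connected graph, hence $CM$ of dimension $1$.

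For the inductive step, put $f:=d_\Delta(\cdot,p)$, so that $P\Delta_p=f^{-1}\bigl([\pi/2,\pi]\bigr)$, and follow the superlevel sets $\Delta_{\ge s}:=f^{-1}\bigl([s,\pi]\bigr)$ as $s$ decreases from $\pi$ to $\pi/2$. The homotopy type changes only when $s$ passes a critical value $f(x)$, where $x$ is a point of some cell $c$ nearest to $p$ and lies in the relative interior of $c$. Since $p$ is interior to a chamber, $d(x,p)<\pi$ for every critical point, so the geodesic from $x$ to $p$ is unique and determines a point $p_x\in\Lk(x,\Delta)$; and because $x$ is the nearest point of $c$, this direction lies in the join factor $\Lk(c,\Delta)$ of $\Lk(x,\Delta)=\sphere^{\dim c-1}*\Lk(c,\Delta)$. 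The link governing the change of $\Delta_{\ge s}$ across $f(x)$ is then identified --- up to the homotopically harmless difference between open and closed $\pi/2$-balls --- with
\[
\sphere^{\dim c-1}*\bigl(\Lk(c,\Delta)\setminus B(p_x,\pi/2)\bigr)=\sphere^{\dim c-1}*\plk_{p_x}(c).
\]
By the second ingredient $\Lk(c,\Delta)\cong\cu(\rho,\gt(V))$ for a proper residue $\rho$, so the inductive hypothesis makes $\plk_{p_x}(c)$ $CM$ of dimension $\dim\Lk(c,\Delta)=n-2-\dim c$ (the case in which $c$ is a chamber, with empty link, does not occur in the range $(\pi/2,\pi)$, since a chamber other than the one containing $p$ has its nearest point on its boundary). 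Hence every relevant ascending link above is $CM$ of dimension $n-2$, so each move across a critical value should add only free-abelian homology in degree $n-1$. Reconciling the cumulative effect with the Solomon--Tits value of $\wt{H}^{*}(\Delta)$ --- via the Mayer--Vietoris sequence of the decomposition $\Delta=\bar B(p,\pi/2)\cup P\Delta_p$, whose first term is contractible --- then forces $\wt{H}^{*}(P\Delta_p)$ to be free abelian and concentrated in degree $n-1$.

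The hard part is that, unlike in Theorem~\ref{t:bbm}, $\Delta$ is $\cat(1)$ of diameter exactly $\pi$ whenever $R$ is thick, so ``distance from $p$'' is \emph{not} a Morse function of the usual kind: nearest-point projections onto cells whose distance from $p$ exceeds $\pi/2$ need not be unique, distinct critical points may share a critical value, and the superlevel sets $\Delta_{\ge s}$ for $s$ near $\pi$ are typically disconnected, retracting onto the finite set of points antipodal to $p$ (one in the interior of each chamber opposite the one containing $p$). Making the ``move'' at a critical value precise, understanding how these components merge as $s$ decreases, and verifying that each move really does add only top-degree free-abelian homology, is exactly the content of the refined polyhedral Morse theory of Schulz \cite{someone}: one passes to a genuine Morse function, controls behaviour near the diameter, and uses the combinatorial convexity and the retractions of the building to see how chambers and half-apartments are attached. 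A second route, which I would develop in parallel and which is closer to the argument of Dymara--Osajda \cite{dymo}, is a ``relative Solomon--Tits'' construction: build $P\Delta_p$ up from a single apartment through $p$ --- which meets $P\Delta_p$ in a disk --- by successively adjoining half-apartments, each contributing at most one $(n-1)$-sphere; this sidesteps some of the metric subtleties at the price of invoking the convexity machinery of spherical buildings.
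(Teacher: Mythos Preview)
The paper does not give its own proof of this theorem: it is stated with attribution to Dymara--Osajda \cite{dymo} and Schulz \cite{someone} and used as a black box in the proof of Theorem~\ref{t:SI}. So there is no argument in the paper to compare your proposal against.

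That said, your outline is a fair summary of the strategies in the cited sources, and you are honest that what you have written is a plan rather than a proof. Two points are worth flagging. First, you slip in the assumption ``since $p$ is interior to a chamber,'' but the statement is for arbitrary $p\in\sphere(R)$; when $p$ lies in a lower-dimensional cell the analysis of geodesics, antipodes, and ascending links changes (for instance, the set of points at distance $\pi$ from $p$ is no longer a finite set of interior points of chambers), and one has to treat this case as well --- or reduce to the interior case by a limiting or perturbation argument, which itself needs justification. Second, your base case $n=2$ asserts that $P\Delta_p$ is a \emph{connected} graph; connectedness is exactly the content of the $CM$ claim in dimension~$1$, and it is not automatic --- it uses thickness and the combinatorics of generalized polygons, so it should be argued rather than asserted. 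Beyond these, your third paragraph correctly identifies where the real work lies: the failure of $d(\cdot,p)$ to be a Morse function near diameter~$\pi$ is precisely what the cited papers have to overcome, and neither the refined Morse theory nor the half-apartment filtration is something one can wave through in a paragraph.
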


An immediate corollary to Theorems~\ref{t:bbm} and \ref{t:pi2} is the following.

\begin{theorem}\label{t:SI}
\textup{(cf.~\cite{bs}).}  With notation as in Section~\ref{s:cat0} and above, 
given any building $\Phi$, its realization 
$\cu(\Phi,\gD^f)$ is $SI$.
\end{theorem}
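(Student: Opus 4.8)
The plan is to apply Theorem~\ref{t:bbm} to $Y:=\cu(\Phi,\gD^f)$, since by now all of the structure it needs is in place. By Theorem~\ref{t:cat}, $Y$ is a piecewise Euclidean $\cat(0)$ complex, and it has only finitely many isometry types of cells because $S$ is finite. It is $n$-dimensional with $n=\dim\gD=|S|-1$ (already noted to be the top possible degree of $H^*_c(\cu(\Phi,\gD^f))$), it is noncompact (it has infinitely many cells, as $W$ is infinite), and it is contractible: the face-preserving deformation retraction $\gD^f\to K$ induces a homotopy equivalence $Y\to\cu(\Phi,K)$, and $\cu(\Phi,K)$ is contractible. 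So the entire task is to verify the hypothesis of Theorem~\ref{t:bbm}: for each cell $c$ of $Y$ and each $p\in\Lk(c)$, the link $\Lk(c)$ and the punctured link $\plk_p(c)$ have the required acyclicity.

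For that I would feed in Lemma~\ref{l:links}. Apart from the central vertex $c=v_\emptyset$, where $\Lk(c)=L'$ directly, every link of a cell of $Y$ is a spherical join
\[
\Lk(c)\;=\;\Lk(\gs_d,L')\,*\,\sphere(R),
\]
where $d$ is the corresponding larger cell, $\gs_d$ is the simplex of $L'$ that it determines, and $\sphere(R)$ is the spherical realization of a spherical residue $R$ of $\Phi$. I would treat the two factors separately. The factor $\sphere(R)$ is a spherical building: by the Solomon--Tits theorem its reduced cohomology is free and concentrated in the top degree, which is exactly what is asked of a link, while Theorem~\ref{t:pi2} says that each of its punctures $P\sphere(R)_q$ is $CM$, which is what is asked on the punctured-link side. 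The factor $\Lk(\gs_d,L')$ is the link of a simplex in $L'$; by Lemma~\ref{l:subdivision} $L'$ is a subdivision of $\partial\gdo\cong\sphere^{n-1}$, and by Lemma~\ref{l:L'} and Corollary~\ref{c:L'} it is a $\cat(1)$ metric flag complex of size $\ge\pi/2$, so $\Lk(\gs_d,L')$ is a piecewise spherical sphere, for which the analogous estimates on the link and its punctures are elementary. (In the special case $\Phi=W$ the residue $R$ is thin, $\sphere(R)$ is a round sphere, $\Lk(c)$ is an honest sphere, and $Y=\cu(W,\gD^f)\cong\rr^n$ is $SI$ by inspection; this exhibits the $L'$-side of the argument.) It remains to observe that both conditions pass through spherical joins: the reduced cohomology of $N_1*N_2$ is free and concentrated in the expected degree when that of each $N_i$ is, and the open $\pi/2$-ball about a point of $N_1*N_2$ decomposes in terms of the $\pi/2$-balls about its images in $N_1$ and $N_2$, so $\plk_p(N_1*N_2)$ is assembled by joins out of $N_1$, $N_2$ and their punctures. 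Putting these together verifies the hypothesis of Theorem~\ref{t:bbm} for $Y$, and since $\dim Y=n$, Theorem~\ref{t:bbm} gives that $Y$ is $SI$.

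The main obstacle is the punctured-link condition, and within it the only genuinely nonformal ingredient is that a punctured spherical building is $CM$ --- that is, Theorem~\ref{t:pi2} (Dymara--Osajda, Schulz). Everything else is bookkeeping: the join decomposition of links from Lemma~\ref{l:links}, the fact that a join of $CM$ complexes is $CM$, and the compatibility of puncturing with spherical joins. In this sense Theorem~\ref{t:SI} is precisely where the $\cat(0)$ geometry constructed in Section~\ref{s:cat0} is combined with the building-theoretic vanishing result Theorem~\ref{t:pi2}; granting both, the conclusion is immediate from the Brady--McCammond--Meier Morse-theoretic criterion.
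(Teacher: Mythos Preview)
Your proposal is correct and follows exactly the route the paper takes: the paper states Theorem~\ref{t:SI} as ``an immediate corollary to Theorems~\ref{t:bbm} and \ref{t:pi2}'' and gives no further details, while you have spelled out the bookkeeping (via Lemma~\ref{l:links}, the join decomposition of links, and the fact that links in $L'$ are spheres) that makes the hypotheses of Theorem~\ref{t:bbm} visible. There is no difference in strategy, only in the level of detail.
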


As a corollary to Theorem~\ref{t:cat2}
we get the following relative version of Theorem~\ref{t:bbm}.

\begin{theorem}\label{t:SI2}
Suppose $U\subseteq S$ and $R$ is any $(S-U)$-residue in $\Phi$. 
Then $\cu(R,\gD^f-\gD^U)$ is $SI$.
\end{theorem}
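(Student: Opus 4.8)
The plan is to mimic the proof of Theorem~\ref{t:SI}, replacing $\cu(\Phi,\gD^f)$ by $\cu(R,\gD^f-\gD^U)$ and the Moussong-type metric by the modified metric of Theorem~\ref{t:cat2}. By Theorem~\ref{t:cat2} the space $\cu(R,\gD^f-\gD^U)$ carries a piecewise Euclidean $\cat(0)$ metric with finitely many shapes of cells (the cells are of the form $B_{T,V}$ or $B_T\times[0,\infty)^\ga$ with $T\in\cs_{S-U}$). So to apply Theorem~\ref{t:bbm} it suffices to check that for each cell $c$ and each $p\in\Lk(c)$, both $\Lk(c)$ and $\plk_p(c)$ are $(n'-\dim c)$-acyclic, where $n'=\dim(\gD^f-\gD^U)=|S|-1$; granting this, $\cu(R,\gD^f-\gD^U)$ is $SI$ of dimension $n'$.

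First I would identify the links. Exactly as in Lemma~\ref{l:links}, the link of a cell $c=B_{T,V}$ or $c=B_{T,V}\times[0,\infty)^\ga$ inside $\cu(R,\gD^f-\gD^U)$ splits as a spherical join $\Lk(\gs_d,L'_{S-U})*\sphere(R_V)$, where $d$ is the corresponding larger cell, $\gs_d$ the associated simplex of $L'_{S-U}$, and $R_V$ is the spherical residue of type $V$ in $R$ through the base chamber (note $V\subseteq T\subseteq S-U$, so this residue lives inside $R$). The factor $\Lk(\gs_d,L'_{S-U})$ is a link of a simplex in a $\cat(1)$, size-$\ge\pi/2$ metric flag complex, hence is itself $\cat(1)$, size $\ge\pi/2$, and a metric flag complex; in particular it is simply connected and — being a metric flag complex that is $\cat(1)$ — has the homotopy type described by Moussong's machinery. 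The factor $\sphere(R_V)$ is the spherical realization of a spherical building, hence $\cat(1)$, and by Theorem~\ref{t:pi2} its punctured version $P\sphere(R_V)_q$ is $CM$. A join of a $\cat(1)$ space with a spherical building is $\cat(1)$, which re-confirms the curvature but, more importantly for us, the join structure lets us compute reduced cohomology of $\Lk(c)$ and of $\plk_p(c)$ via the join/suspension formula.

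The key computation is acyclicity in the required range. For $\Lk(c)=\Lk(\gs_d,L'_{S-U})*\sphere(R_V)$: the complex $L'_{S-U}$ is a subdivision of $\pgdo$, i.e.\ of $S^{n'}$ (here $n'=|S|-1$), so $L'_{S-U}$ is an $n'$-sphere and the link of a $k$-simplex $\gs_d$ in it is an $(n'-k-1)$-sphere; since $\dim\gs_d$ and $\dim c$ are related by $\dim c=\dim\gs_d+1$ (with the join with $\sphere(R_V)$ accounting for the $V$-directions and the $[0,\infty)^\ga$ directions built into $\gs_d$), the join $\Lk(\gs_d,L'_{S-U})*\sphere(R_V)$ is $(n'-\dim c)$-acyclic because a sphere joined with anything is highly connected — more precisely $X*S^m$ is $m$-connected for any $X$. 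For the punctured link $\plk_p(c)$: write $p=(p_1,p_2)$ in the join; puncturing a spherical join at a point is, up to the standard identifications, built from the punctured links of the two factors, and here one of the factors is a genuine sphere $\Lk(\gs_d,L'_{S-U})$ whose puncture is a hemisphere, hence contractible, or $\sphere(R_V)$ whose puncture is $CM$ by Theorem~\ref{t:pi2} and whose top-dimensional reduced cohomology sits exactly in degree $\dim\sphere(R_V)$. Assembling these via the suspension-of-punctured-links description from the Morse-theory remark after Theorem~\ref{t:bbm}, one gets that $\plk_p(c)$ is $(n'-\dim c)$-acyclic; in fact its reduced cohomology is free abelian and concentrated in degree $n'-\dim c$, which is more than enough. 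The main obstacle is precisely this bookkeeping: one must verify carefully that the dimension count $n'-\dim c$ matches the connectivity coming from the sphere factor $\Lk(\gs_d,L'_{S-U})$ and from the $CM$ property of $P\sphere(R_V)_q$, i.e.\ that nothing is lost in the join formula when the puncture point lies partly in each factor. Once that is pinned down, Theorem~\ref{t:bbm} applies verbatim and gives the conclusion.
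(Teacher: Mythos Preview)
Your approach is correct and essentially identical to the paper's: the paper simply states Theorem~\ref{t:SI2} as a corollary to Theorem~\ref{t:cat2}, leaving implicit exactly the link/punctured-link verification via Theorems~\ref{t:bbm} and~\ref{t:pi2} that you spell out. One small slip in your bookkeeping: $L'_{S-U}$, being a subdivision of $\partial\gdo$, is a sphere of dimension $|S|-2=n'-1$ (not $n'$); once this is corrected the dimension count you flag as ``the main obstacle'' goes through cleanly.
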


\begin{corollary}\label{c:SI3}
Suppose $U\subseteq S$.  Then $\cu(\Phi,\gD^f-\gD^U)$ is $SI$.
\end{corollary}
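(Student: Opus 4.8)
The plan is to reduce the statement to Theorem~\ref{t:SI2} by recognizing $\cu(\Phi,\gD^f-\gD^U)$ as a topological disjoint union of copies of the spaces already handled there, one for each $(S-U)$-residue of $\Phi$.

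First I would analyze the mirror structure that $\gD^f-\gD^U$ inherits from $\gD$. For $s\in U$ we have $\gD_s\subseteq\gD^U$, so the induced mirror $(\gD^f-\gD^U)\cap\gD_s$ is empty; hence for every point $x\in\gD^f-\gD^U$ the set $S(x)$ is contained in $S-U$. Looking at the defining equivalence relation on $(\Phi\times(\gD^f-\gD^U))/\!\sim$, this means that two points $[\gf,x]$ and $[\gf',x]$ can be identified only when $\gd(\gf,\gf')\in W_{S(x)}\subseteq W_{S-U}$, i.e.\ only when $\gf$ and $\gf'$ lie in a common $(S-U)$-residue. Therefore, for each $(S-U)$-residue $R$, the image of $\cu(R,\gD^f-\gD^U)$ under the map induced by $R\hookrightarrow\Phi$ is exactly the union of the copies of $\gD^f-\gD^U$ indexed by the chambers of $R$; it is both open and closed, and these subspaces partition $\cu(\Phi,\gD^f-\gD^U)$. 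Thus
\[
\cu(\Phi,\gD^f-\gD^U)=\coprod_{R}\,\cu(R,\gD^f-\gD^U),
\]
the disjoint union taken over all $(S-U)$-residues $R$ of $\Phi$.

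Next I would assemble the cohomology. By Theorem~\ref{t:SI2} each $\cu(R,\gD^f-\gD^U)$ is $SI$, so its compactly supported cohomology vanishes outside the top degree $n=|S|-1$ and is free abelian there. Since compactly supported cohomology takes a disjoint union to a direct sum — a compactly supported cohomology class has support in a compact set, which meets only finitely many components, so $H^*_c(\coprod_R Y_R)=\bigoplus_R H^*_c(Y_R)$ — it follows that $H^*_c(\cu(\Phi,\gD^f-\gD^U))$ is concentrated in degree $n$ and, being a direct sum of free abelian groups, is free abelian there. This is the assertion that $\cu(\Phi,\gD^f-\gD^U)$ is $SI$ (when $\Phi$ is not $(S-U)$-connected the space is a disjoint union of its $SI$ components, and the statement is read componentwise, or equivalently as the displayed statement about $H^*_c$).

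I do not expect a genuine obstacle: all the real work sits in Theorems~\ref{t:cat2}, \ref{t:bbm} and \ref{t:pi2}, which have already produced Theorem~\ref{t:SI2}; what remains is the residue bookkeeping and the elementary behaviour of $H^*_c$ under disjoint unions. The only point needing a little care is the standard building fact used above, that $\gf$ and $\gf'$ lie in a common residue of type $T$ precisely when $\gd(\gf,\gf')\in W_T$ (here $T=S-U$); this is immediate from the gallery characterization of the $W$-valued distance function recalled in Section~\ref{s:buildings}.
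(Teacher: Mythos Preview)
Your proposal is correct and follows exactly the paper's approach: decompose $\cu(\Phi,\gD^f-\gD^U)$ as the disjoint union of the $\cu(R,\gD^f-\gD^U)$ over $(S-U)$-residues $R$, then invoke Theorem~\ref{t:SI2} on each piece. You have simply supplied the details the paper omits (why the decomposition holds, and the behaviour of $H^*_c$ on disjoint unions), and your parenthetical remark about reading ``$SI$'' as a statement about $H^*_c$ is apt, since the literal definition asks for contractibility while the only use of the corollary downstream is the vanishing/freeness of compactly supported cohomology.
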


\begin{proof}
$\cu(\Phi,\gD^f-\gD^U)$ is the disjoint union of the spaces $\cu(R,\gD^f-\gD^U)$ where $R$ ranges over the $(S-U)$-residues.  By the previous theorem, the compactly supported cohomology of each such component is free abelian and concentrated in the top degree.
\end{proof}

\section{Cohomology with finite support and compact support}\label{s:support}
Given a CW complex $Y$, $C^*_{\fin}(Y)$ denotes the complex of finitely supported cellular cochains on $Y$ and $H^*_\fin(Y)$ its cohomology.  When $Y$ is only required to be a topological space, $H^*_c(Y)$ denotes its compactly supported singular cohomology, i.e.,
\[
H^*_c(Y):=\varinjlim H^*(Y, Y-C),
\]
where the direct limit is over all compact subsets $C\subseteq Y$.  If $Y$ is a  locally finite CW complex,  $H^*_\fin(Y)\cong H^*_c(Y)$.

Suppose $Z$, $Z'$ are mirrored spaces over $S$.  A map $F:Z\to Z'$ is \emph{mirrored} if $F(Z_T)\subseteq Z'_T$ for all $T\subseteq S$; it is a \emph{mirrored homotopy equivalence} if $F\vert_{Z_{T}}:Z_T\to Z'_T$ is a homotopy equivalence for all $T$ (including $T=\emptyset$).

Given a mirrored space $X$, let us say that $\gU(X)$ is \emph{collared in $X$} if there is an increasing family $\cn=\{N_\geps\}_{\geps\in (0,a]}$ of open neighborhoods of $\gU(X)$ (``increasing means that $N_\geps \subseteq N_{\geps'}$ whenever $\geps <\geps'$) such that the following two properties hold:
\begin{enumeratei}
\item
\(
\bigcap_{\geps \in (0,a]} N_\geps = \gU(X)
\) and
\item
For each $\geps>0$, the inclusion $\gU(X)\hookrightarrow \ol{N}_\geps$ is a mirrored homotopy equivalence.
\end{enumeratei}

For example, if $X$ is the simplex $\gD$, then $\gU(\gD)$ is collared in $\gD$.  (Proof: $\gU(\gD)$ is a union of closed faces and we can take $N_\geps$ to be its $\geps$-neighborhood in $\gD$.)  More generally, if $X$ is any finite CW complex, then $\gU(X)$ is collared in $X$.

Property (ii) implies that $\cu(\Phi, \gU(X))\hookrightarrow \cu(\Phi, N_\geps)$ is a homotopy equivalence.
To simplify notation, in what follows we often write $\cu_X$ instead of $\cu(\Phi,X)$.

\begin{lemma}\label{l:cfin}
Suppose $X$ is a finite, mirrored CW complex .   Then 
\[
H^*_c(\cu_{X^f})=H^*_\fin(\cu_X,\cu_{\gU(X)}).
\]
\end{lemma}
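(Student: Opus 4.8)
The plan is to realize both sides of the asserted equality as one and the same directed colimit. Write $Z:=\cu_{\gU(X)}$. Since the equivalence relation defining $\cu_X=(\Phi\times X)/{\sim}$ never alters the $X$‑coordinate, the projection $\cu_X\to X$ is continuous; hence $\cu_{\gU(X)}$ is a subcomplex of $\cu_X$, it is closed, and $\cu_X=\cu_{X^f}\sqcup Z$ as a set, with $\cu_{X^f}=\cu_X\setminus Z$ open. Now the right‑hand side is already a colimit over finite subcomplexes: a finitely supported relative cellular cochain is supported in some finite subcomplex, and conversely any relative cochain on $(L,L\cap Z)$ extends by zero, so $C^*_\fin(\cu_X,Z)=\varinjlim_L C^*(L,L\cap Z)$ over finite subcomplexes $L\subseteq\cu_X$; as filtered colimits are exact, $H^*_\fin(\cu_X,Z)=\varinjlim_L H^*(L,L\cap Z)$.

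The substantive point is to identify, for each finite subcomplex $L$, the group $H^*(L,L\cap Z)$ with $H^*_c(L\cap\cu_{X^f})$, naturally in $L$. Put $Z_L:=L\cap Z$, a subcomplex of the compact CW complex $L$, so that $L\setminus Z_L=L\cap\cu_{X^f}$. Because $L$ is compact Hausdorff and $Z_L$ is closed, the evident bijection identifies the one‑point compactification $(L\setminus Z_L)^+$ with the quotient $L/Z_L$: a neighborhood of the point at infinity is the complement of a compact subset of $L\setminus Z_L$, i.e.\ of a closed subset of $L$ missing $Z_L$, i.e.\ a neighborhood of $Z_L$ in $L$. Since $(L/Z_L,[Z_L])$ is a CW pair, the standard identification of compactly supported cohomology with reduced cohomology of the one‑point compactification gives
\[
H^*_c(L\cap\cu_{X^f})=\widetilde H^*\big((L\setminus Z_L)^+\big)=\widetilde H^*(L/Z_L)=H^*(L,Z_L),
\]
all isomorphisms natural in $L$.

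It remains to show $\varinjlim_L H^*_c(L\cap\cu_{X^f})=H^*_c(\cu_{X^f})$. The key observation — and the reason $X^f$, not $X$, appears in the statement — is that although $\cu_X$ need not be locally finite, $\cu_{X^f}$ is: a point of $\cu_{X^f}$ lying over an open cell $c$ of $X$ (necessarily with $S(c)\in\cs$) has a neighborhood glued only from the $S(c)$‑residue through that chamber, a finite set because $W_{S(c)}$ is finite and $\Phi$ has finite thickness. Hence $\cu_{X^f}$ is locally compact Hausdorff, so any compact $D\subseteq\cu_{X^f}$ lies in the interior of a compact neighborhood $D'$, and $D'$ lies in some finite subcomplex $L\subseteq\cu_X$; excising the closed set $\overline{\cu_{X^f}\setminus L}$ (disjoint from $D$ by construction) from the pair $(\cu_{X^f},\cu_{X^f}\setminus D)$ yields $H^*(\cu_{X^f},\cu_{X^f}\setminus D)\cong H^*(L\cap\cu_{X^f},(L\cap\cu_{X^f})\setminus D)$. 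Passing to the colimit over all such $D$, and reorganizing the resulting double colimit over $D$ and over finite $L$ with $D$ interior to $L\cap\cu_{X^f}$, identifies $H^*_c(\cu_{X^f})$ with $\varinjlim_L H^*_c(L\cap\cu_{X^f})$. Combining the three displays gives $H^*_c(\cu_{X^f})=H^*_\fin(\cu_X,\cu_{\gU(X)})$.

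The routine ingredients are the colimit description of finitely supported cochains and the one‑point‑compactification computation of $H^*(L,Z_L)$. The step that demands care — the main obstacle — is the last paragraph: reconciling the colimit over compact subsets of $\cu_{X^f}$, which governs $H^*_c$, with the colimit over finite subcomplexes of $\cu_X$, which governs $H^*_\fin$, by a cofinality‑plus‑excision argument. This works precisely because $\cu_{X^f}$, unlike $\cu_X$, is locally compact; the collar of $\gU(X)$ in $X$ (available since $X$ is finite) gives an alternative, more global route through neighborhoods of $Z$, but is not strictly needed for this lemma.
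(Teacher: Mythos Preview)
Your overall strategy is reasonable, but the step you label as ``routine'' contains a genuine gap. You assert that $C^*_\fin(\cu_X,Z)=\varinjlim_L C^*(L,L\cap Z)$ as a filtered colimit of cochain complexes and then invoke exactness of filtered colimits to pass to cohomology. The problem is that the structure maps in your directed system---extension by zero from $C^*(L,Z_L)$ to $C^*(L',Z_{L'})$ for $L\subseteq L'$---are \emph{not} chain maps. Concretely, a cell $[\gf,c]\in L\setminus Z_L$ (so $S(c)$ is spherical) can have cofaces lying in $L'\setminus L$; if $f$ is supported on $[\gf,c]$ then $\gd_{L'}\tilde f$ is nonzero on those cofaces while $\widetilde{\gd_L f}$ vanishes there. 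Consequently the system $\{H^*(L,Z_L)\}$ is not even defined as a directed system, and the identification $H^*_\fin(\cu_X,Z)=\varinjlim_L H^*(L,L\cap Z)$ is unjustified.

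This is exactly the difficulty the paper's collar argument is designed to overcome, so your closing remark that the collar ``is not strictly needed'' is mistaken. Because $\cu_X$ fails to be locally finite precisely along $Z=\cu_{\gU(X)}$, one cannot filter naively by finite subcomplexes. The paper instead replaces $\gU(X)$ by a collar neighborhood $\ol N_\geps$: the mirrored homotopy equivalence $\gU(X)\hookrightarrow\ol N_\geps$ yields $H^*_\fin(\cu_X,\cu_{\gU(X)})\cong H^*_c(\cu_{X-N_\geps},\cu_{\partial N_\geps})$ after an excision, and the shrinking family $\{X-N_\geps\}$ then supplies an honest exhaustion of $\cu_{X^f}$ by locally finite pieces on which compact sets become cofinal. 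A possible repair of your route is to index not by finite subcomplexes of $\cu_X$ but by finite \emph{coface-closed} collections $E$ of open cells of $\cu_{X^f}$ (these exist and are cofinal because $\cu_{X^f}$ is locally finite and finite-dimensional, and extension by zero \emph{is} a chain map between such); however, your one-point-compactification step 3 would then need to be redone, since such an $E$ is not the cell set of a subcomplex.
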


\begin{proof}
Since $X$ is a finite complex, 
\(
H^*_\fin (\cu_X,\cu_{\gU(X)})=H^*_c(\cu_X,\cu_{\gU(X)}).
\)  
Since $\gU(X)$ is collared in $X$, we have a family $\cn=\{N_\geps\}$ of open neighborhoods. For a given $N= N_\geps\in \cn$, let $\partial N$ denote the boundary of $\ol{N}$.  Since $N$ is mirrored homotopy equivalent to $\gU(X)$,
\[
H^*_\fin(\cu_X,\cu_{\gU(X)})\cong H^*_c(\cu_X,\cu_{\ol{N}})\cong 
H^*_c(\cu_{(X-N)},\cu_{\partial N}),
\]
where the second isomorphism is an excision. As $\geps\to 0$, $X-N_\geps \to X^f$, so any compact subset $C$ of $\cu_{X^f}$ lies within some $\cu_{X-N_\geps}$.  Hence,
\[
H^*_c(\cu_{X^f})=\varinjlim H^*(\cu_{X^f},\cu_{X^f}-C)=H^*_c(\cu_{X-N},\cu_{\partial N}).
\]
Combining these equations, we get the result.
\end{proof}

\begin{example}
$\gD^f$ is  a thickened version of $K$ and $\cu(\Phi,\gD^f)$ is a thickened version of  $\cu(\Phi,K)$.  Hence, $\cu(\Phi,\gD^f)$ and $\cu(\Phi,K)$ are homotopy equivalent.  On the other hand, their compactly supported cohomology groups can be completely different.  For example, suppose $W$ is the free product of three copies of $\zz/2$.  Then $\gD^f$ is a triangle with its vertices deleted while $K$ is a tripod.  $\cu(W,\gD^f)$ can be identified with the hyperbolic plane while $\cu(W,K)$ is the regular trivalent tree.  (This is the familiar picture of the congruence $2$ subgroup of $PSL(2,\zz)$ acting on the hyperbolic plane.)  The compactly supported cohomology of $\cu(W,\gD^f)$ is that of the plane (i.e., it is concentrated in degree $2$ and is isomorphic to $\zz$ in that degree), while the compactly supported cohomology of $\cu(W,K)$ is that of a tree (i.e., it is concentrated in degree $1$ and is a countably generated, free abelian group in that degree).
\end{example}

\section{Cohomology with coefficients in $\ci(A)$}\label{s:A}
Let $A$ ($=A(\Phi)$) be the free abelian group of finitely supported, $\zz$-valued functions on $\Phi$.  For each subset $T$ of $S$, define a $\zz$-submodule of $A$: 
\[
A^T:=\{f\in A\mid f \text{ is constant on each residue of type $T$}\}.
\]
Note that $A^\emptyset =A$.  Also note that $A^T=0$ whenever $T$ is  not spherical.

We have $A^U\subset A^T$ when $T\subset U$.  Let $A^\ut$ be the $\zz$-submodule of $A^T$ spanned by the $A^U$ with $T$ a proper subset of $U$.  Put 
\[
D^T:=A^T/A^\ut.
\]

\begin{remark}
When $\Phi=W$, $A$ is the group ring $\zz W$ and $A^T$ consists of elements in $\zz W$ which are constant on each left coset $wW_T$.  Let us assume $T$ is spherical (otherwise $A^T=0$).  Let $\In (w):=\{s\in S\mid l(ws)<l(w)\}$ be the set of letters with which a reduced expression for $w$ can end. Since $T$ is spherical, each left coset of $W_T$ has a unique representative $w$ which has a reduced expression ending in the longest element of $W_T$; hence, this representative has $T\subseteq \In(w)$.  As a basis for $D^T$, we can take  images of the elements in $A^T$ corresponding to cosets which have longest representatives with $T=\In(w)$.
\end{remark}

\begin{definition}\label{d:cochains}
The abelian group $A$ and the family of subgroups $\{A^T\}_{T\subseteq S}$ define a ``coefficient system'' $\ci(A)$ on $X$ so that the \emph{$i$-cochains with coefficients in $\ci(A)$} are given by
\[
\cac^i(X; \ci(A)):= \prod_{c\in X^{(i)}} A^{S(c)}
\]
where $X^{(i)}$ denotes the set of $i$-cells in $X$.
\end{definition}

We continue to write $\cu_X$ for $\cu(\Phi, X)$.  Let $\cu^{(i)}$ denote the set of $i$-cells in $\cu_X$.  
Given a chamber $\gf\in \Phi$ and an $i$-cell $c$ in $X$, let $\gf\cdot c$ denote the corresponding $i$-cell in $\cu_X$.  Given a finitely supported function $\ga:\cu^{(i)}\to \zz$, and an $i$-cell $c\in X^{(i)}$ with $S(c)$ spherical, we get an element $f\in A^{S(c)}$ defined by
\(
f(\gf):=\ga(\gf\cdot c).
\)
Of course, when $S(c)$ is not spherical, $A^{S(c)}=0$.
So, for any finite, mirrored CW complex $X$, 
this establishes an isomorphism 
\begin{equation}\label{e:cochainiso}
\cac^i(X;\ci(A))\cong C^i_\fin (\cu_X, \cu_{\gU(X)}).  
\end{equation}
In other words, the isomorphism \eqref{e:cochainiso} is given by identifying a finitely supported function on the inverse image in $\cu^{(i)}$ of a cell $c\in X^{(i)}$ with a function on $\Phi$ (i.e., with an element of  $A$) which is constant on  $S(c)$-residues containing the cells $\gf\cdot c$.     

The coboundary maps in $\cac^*(X;\ci(A))$ are defined by using these isomorphisms to transport the coboundary maps on finitely supported cochains to $\cac^*(X;\ci(A))$.  This means that the coboundary maps in $\cac^*(X;\ci(A))$ are defined by combining the usual coboundary maps in $C^*(X)$ with the inclusions $A^U\hookrightarrow A^T$ for $U\supset T$. So, we have the following.

\begin{lemma}\label{l:finite} For $X$  a finite, mirrored CW complex, 
\[
\ch^*(X;\ci(A))=H^*_\fin (\cu_X,\cu_{\gU(X)})=H^*_c(\cu_{X^f}).
\]
\end{lemma}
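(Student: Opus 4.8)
The statement chains together two identifications, each of which has essentially already been established, so the plan is to assemble them carefully rather than to prove anything genuinely new. First I would dispatch the second equality, $H^*_\fin(\cu_X,\cu_{\gU(X)})=H^*_c(\cu_{X^f})$: this is precisely Lemma~\ref{l:cfin}, whose hypothesis (that $X$ is a finite mirrored CW complex, so that $\gU(X)$ is a union of closed faces and hence collared) is exactly what we are assuming here. So nothing remains to do for that half beyond citing Lemma~\ref{l:cfin}.

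The substance is the first equality, $\ch^*(X;\ci(A))=H^*_\fin(\cu_X,\cu_{\gU(X)})$. Here the plan is to observe that the cochain-level isomorphism \eqref{e:cochainiso}, namely $\cac^i(X;\ci(A))\cong C^i_\fin(\cu_X,\cu_{\gU(X)})$, is by construction a chain isomorphism: the coboundary on $\cac^*(X;\ci(A))$ was \emph{defined} to be the one transported across \eqref{e:cochainiso} from the ordinary cellular coboundary on finitely supported relative cochains. Since an isomorphism of cochain complexes induces an isomorphism on cohomology, taking cohomology of both sides of \eqref{e:cochainiso} immediately yields $\ch^*(X;\ci(A))\cong H^*_\fin(\cu_X,\cu_{\gU(X)})$. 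The only points worth spelling out are (a) why \eqref{e:cochainiso} really is degreewise an isomorphism of abelian groups — this is the bijection between finitely supported $\zz$-valued functions on $\{\gf\cdot c : \gf\in\Phi\}$ that are constant on $S(c)$-residues and elements of $A^{S(c)}$, together with the vanishing $A^{S(c)}=0$ when $S(c)\notin\cs$, which matches the fact that a cell $\gf\cdot c$ with $c\subseteq\gU(X)$ lies in $\cu_{\gU(X)}$ and so contributes nothing to the relative complex — and (b) that the finiteness of $X$ is what makes the products $\prod_{c\in X^{(i)}} A^{S(c)}$ agree with finitely supported cochains on $\cu_X$ (each $i$-cell of $\cu_X$ lies over one of finitely many cells of $X$, and finite support downstairs corresponds to finite support of each of the finitely many coordinate functions in $A$).

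I do not expect a real obstacle here; the lemma is a bookkeeping statement collecting the definitions of Section~\ref{s:A} and Lemma~\ref{l:cfin}. If anything is delicate it is making sure the identification of coboundaries in \eqref{e:cochainiso} is stated with the correct variance — one must check that the ordinary coboundary $C^i_\fin(\cu_X,\cu_{\gU(X)})\to C^{i+1}_\fin(\cu_X,\cu_{\gU(X)})$, restricted to the summand indexed by a cell $c$ of $X$ and a coface $c'$, is exactly the inclusion $A^{S(c')}\hookrightarrow A^{S(c)}$ (valid because $S(c')\supseteq S(c)$) weighted by the incidence number $[c':c]$ — but this is exactly the description already given in the paragraph preceding the lemma, so in the write-up I would simply refer to it and conclude.
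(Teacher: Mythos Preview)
Your proposal is correct and matches the paper's approach exactly: the paper's proof is a single sentence citing Lemma~\ref{l:cfin} for the second equality, with the first equality left implicit from the cochain-level isomorphism \eqref{e:cochainiso} and the transported coboundary, just as you describe. One small slip to fix in your write-up: in the parenthetical you have the inclusion backwards --- if $c$ is a face of the higher-dimensional cell $c'$ then $S(c')\subseteq S(c)$ (the larger cell lies in fewer mirrors), so the coboundary uses $A^{S(c)}\hookrightarrow A^{S(c')}$, in line with the paper's phrase ``the inclusions $A^U\hookrightarrow A^T$ for $U\supset T$.''
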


\begin{proof}
The second equation is from Lemma~\ref{l:cfin}.
\end{proof}

\begin{remark} 
If $\Phi =W$, then $A=\zz W$.  $\cac^*(X;\ci(\zz W))$ can be interpreted as the equivariant cochains on $\cu(W,X)$ with coefficients in $\zz W$.  When $X$ is a finite complex, Lemma~\ref{l:finite} asserts
\[
\cac^*(X;\ci(\zz W )=C^*_\fin(\cu(W,X),\cu(W,\gU(X))).
\]
The corresponding cohomology groups were studied in \cite{ddjo2, d98}.  In particular, when $X=K$, these cohomology groups are isomorphic to $H^*(W;\zz W)$.  
\end{remark}

In what follows  the coefficient system is usually $\ci(A)$ and we shall generally omit it from our notation, writing $\ch(\ )$ and $\cac(\ )$ instead of $\ch(\ ;\ci(A))$ and $\cac(\ ;\ci(A))$.  (In other words, the coefficients $\ci(A)$ are implicit when we use the calligraphic $\cac$ or $\ch$ notation.) 
As usual,  $\gD$ is the simplex of dimension $n=|S|-1$ with its codimension one faces indexed by $S$. 

\begin{theorem}\label{t:gd}
\textup{(cf.~\cite[Theorem B]{dj} as well as \cite{ddjo, dym}).}  
$\ch^*(\gD)$ is concentrated in degree $n$ and is free abelian; moreover, $\ch^n(\gD)=D^\emptyset$.  More generally, for any subset $U$ of $S$, $\ch^*(\gD, \gD^U)$ is concentrated in degree $n$ and is free abelian and 
\[
\ch^n(\gD,\gD^U)=A\big/\sum_{s\in S-U} A^s
\]
\end{theorem}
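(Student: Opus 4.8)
The plan is to compute $\ch^*(\gD,\gD^U)$ directly from the cochain complex $\cac^*(\gD;\ci(A))$, exploiting the fact that $\gD$ is a single simplex with codimension-one faces indexed by $S$. Recall from Definition~\ref{d:cochains} that $\cac^i(\gD;\ci(A))=\prod_{c}A^{S(c)}$, where $c$ runs over the $i$-cells of $\gD$; since the faces of $\gD$ are indexed by subsets of $S$, an $i$-cell $\gD_T$ corresponds to a subset $T\subseteq S$ with $|T|=n-i$, and $S(\gD_T)=T$. So $\cac^i(\gD)=\bigoplus_{|T|=n-i}A^T$, with coboundary built from the simplicial coboundary of $\gD$ together with the inclusions $A^U\hookrightarrow A^T$ for $T\subset U$. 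The relative complex $\cac^*(\gD,\gD^U)$ is the subcomplex where we only keep faces $\gD_T$ with $T\not\supseteq\{s\}$ for... more precisely, we quotient by the subcomplex supported on $\gD^U=\bigcup_{s\in U}\gD_s$, i.e.\ we keep only the faces $\gD_T$ with $T$ containing no element of $U$ — equivalently $T\subseteq S-U$. Thus $\cac^i(\gD,\gD^U)=\bigoplus_{\substack{T\subseteq S-U\\ |T|=n-i}}A^T$.

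The key observation is that this complex splits as a direct sum over the "leading residue type." Concretely, since $A^T=0$ unless $T$ is spherical, and since for spherical $T$ we have $A^T=\bigoplus_{V\supseteq T}\ha^V$ by the Decomposition Theorem (Theorem~\ref{t:decomp}, applicable here), the whole complex $\cac^*(\gD,\gD^U)$ decomposes as $\bigoplus_{V\in\cs}\big(\text{(a simplicial-type subcomplex depending on }V\text{)}\otimes\ha^V\big)$. For a fixed spherical $V$, the $\ha^V$-isotypic part is the cochain complex of a subcomplex of $\gD$: namely the chains come from faces $\gD_T$ with $T\subseteq S-U$ and $V\supseteq T$, i.e.\ $T\subseteq V\cap(S-U)=V-U$. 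But a face $\gD_T$ with $T$ ranging over \emph{all} subsets of a fixed set $V-U$ is exactly the (reduced, relative to its boundary appropriately) cochain complex of the simplex on the vertex set $S-(V-U)$ relative to the part at infinity — a cone, hence it is acyclic except in the top degree, where the relevant combinatorial factor contributes a single copy of $\ha^V$ concentrated in degree $n$. Summing over $V$ then gives that $\ch^*(\gD,\gD^U)$ is concentrated in degree $n$, is free abelian, and equals $\bigoplus_{V\in\cs}\ha^V$ where $V$ ranges over... the spherical sets $V$ for which the combinatorial factor is nonzero, which one identifies with $A\big/\sum_{s\in S-U}A^s$ via the Decomposition Theorem: indeed $\sum_{s\in S-U}A^s=\sum_{s\in S-U}\bigoplus_{V\ni s}\ha^V=\bigoplus_{V\cap(S-U)\neq\emptyset}\ha^V$, so $A/\sum_{s\in S-U}A^s=\bigoplus_{V\subseteq U}\ha^V$, and one checks the surviving $V$'s in the cohomology computation are precisely those with $V\cap(S-U)=\emptyset$.

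Alternatively — and this is likely the cleaner route to present — I would argue by induction on $|S-U|$, peeling off one generator $s_0\in S-U$ at a time using the long exact sequence of the triple $(\gD,\gD^{U\cup\{s_0\}},\gD^U)$ together with the obvious identification $\cac^*(\gD,\gD^{U\cup\{s_0\}})\cong\cac^{*+1}(\gD_{s_0},\gD_{s_0}\cap\gD^U)$ coming from the fact that $\gD_{s_0}$ is a face of codimension one whose link in $\gD$ is a point; here $\gD_{s_0}$ is itself a simplex of dimension $n-1$ with codimension-one faces indexed by $S-\{s_0\}$ and whose coefficient system restricts to $\ci(A^{s_0})$ (the analogous coefficient system built from the residues). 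The base case $U=S$ is the statement $\ch^*(\gD)$ is concentrated in degree $n$ with $\ch^n(\gD)=D^\emptyset=A/A^{>\emptyset}=\bigoplus_{V\neq\emptyset}\ha^V$... wait, $D^\emptyset=A/A^{>\emptyset}$, and $A^{>\emptyset}=\sum_{s\in S}A^s$, so $\ch^n(\gD)=A/\sum_{s\in S}A^s$, consistent with the formula at $U=\emptyset$... I need to be careful: the formula claims $\ch^n(\gD)=D^\emptyset$ and also $\ch^n(\gD,\gD^U)=A/\sum_{s\in S-U}A^s$, and at $U=S$ the sum is empty so $\ch^n(\gD,\gD^S)=A$ — but $\gD^S=\partial\gD$, so this is the genuinely simple base case $\ch^*(\gD,\partial\gD)$, which is just $A^{\emptyset}=A$ in degree $n$ by a one-line computation (the only face $\gD_T$ with $T\cap(S-S)=T\subseteq\emptyset$ is $T=\emptyset$, the top cell, contributing $A$ in degree $n$ and nothing else).

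The main obstacle I anticipate is bookkeeping rather than conceptual: one must verify that the inclusions $A^U\hookrightarrow A^T$ appearing in the coboundary interact correctly with the simplicial differentials so that the complex really does split along the Decomposition Theorem, and that the connecting homomorphisms in the inductive long exact sequences vanish in all but the top degree (which follows from the inductive hypothesis placing everything in the top degree, but the degree shift by one between $(\gD,\gD^{U\cup\{s_0\}})$ and $\gD_{s_0}$ must be tracked precisely). Once the degree concentration is established, exactness of the long exact sequence in top degree immediately yields the short exact sequence $0\to\ch^n(\gD,\gD^{U\cup\{s_0\}})\to\ch^n(\gD,\gD^U)\to\ch^n(\gD_{s_0},\ldots)\to 0$, and identifying the quotient as $A^{s_0}/\sum_{s\in S-U-\{s_0\}}(A^{s_0})^s$ plus a diagram chase with the Decomposition Theorem gives the stated formula $A/\sum_{s\in S-U}A^s$. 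Freeness is automatic since everything is a subquotient of the free abelian group $A$ that we have exhibited as a direct sum of the $\ha^V$.
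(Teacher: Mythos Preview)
Your proposal is circular. Both of your approaches invoke the Decomposition Theorem (Theorem~\ref{t:decomp}), but in the paper's logical structure that theorem is proved \emph{after} Theorem~\ref{t:gd} and depends on it: Proposition~\ref{p:sigma} starts its induction at $|T|=0$ by citing Theorem~\ref{t:gd}, and Theorem~\ref{t:decomp} is deduced from Proposition~\ref{p:sigma}. The paper even flags this dependency in the Introduction: ``the proof of the Decomposition Theorem depends on first establishing a version of the Main Theorem for $\cu(\Phi,\gD^f)$.'' So you cannot use $A^T=\bigoplus_{V\supseteq T}\ha^V$ here.

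What the paper actually does is geometric, not combinatorial. By Lemma~\ref{l:finite}, $\ch^*(\gD,\gD^U)\cong H^*_c(\cu(\Phi,\gD^f-\gD^U))$. The concentration in degree $n$ and freeness then come from Theorem~\ref{t:SI} and Corollary~\ref{c:SI3}, which in turn rest on the $\cat(0)$ metric constructed in Section~\ref{s:cat0} together with the Brady--McCammond--Meier Morse-theoretic criterion and the Dymara--Osajda/Schulz result on punctured spherical buildings. Only \emph{after} knowing the cohomology is concentrated in the top degree does one read off $\ch^n$ from the top two terms of the cochain complex, which is the easy computation you wrote down: $\cac^n(\gD,\gD^U)=A$, $\cac^{n-1}(\gD,\gD^U)=\bigoplus_{s\in S-U}A^s$, hence $\ch^n=A/\sum_{s\in S-U}A^s$.

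Two smaller points. Your inductive alternative, even setting aside its final appeal to the Decomposition Theorem, has a genuine gap: from the triple $(\gD,\gD^{U\cup\{s_0\}},\gD^U)$ you get
\[
0\to \ch^{n-1}(\gD,\gD^U)\to \ch^{n-1}(\gD_{s_0},(\gD_{s_0})^U)\to \ch^n(\gD,\gD^{U\cup\{s_0\}})\to\cdots,
\]
and nothing in your hypotheses forces the middle map to be injective; that is exactly the vanishing you are trying to prove. And your freeness argument (``subquotient of a free abelian group'') is false as stated: $\zz/2$ is a subquotient of $\zz$. Freeness in the paper comes from the $SI$ property, not from algebra.
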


\begin{proof}
By Lemma~\ref{l:finite}, $\ch^*(\gD)=H^*_c(\cu_{\gD^f})$ and by Theorem~\ref{t:SI}, the right hand side is concentrated in degree $n$ and is free abelian.  The cochain complex looks like
\[
\cdots \to \cac^{n-1}(\gD) \to \cac^n(\gD)\to 0,
\]
where $\cac^n(\gD)=A$ and $\cac^{n-1}(\gD)=\bigoplus A^s$.   It follows that cohomology in degree $n$ is the quotient
\[
A\big/\sum_{s\in S} A^s = D^\emptyset.
\]
Similarly, $\ch^*(\gD,\gD^U)=H^*_c(\cu_{\gD^f-\gD^U})$, and by Corollary~\ref{c:SI3} the right hand side is concentrated in degree $n$ and is free abelian.  Since $\cac^n(\gD,\gD^U)=A$ and $\cac^{n-1}(\gD, \gD^U)=\bigoplus_{s\notin U} A^s$, we get the final formula in the theorem.
\end{proof}

\begin{corollary}\label{c:Dfree}
$D^\emptyset$ is free abelian.
\end{corollary}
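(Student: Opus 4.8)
The plan is to obtain this as an immediate consequence of Theorem~\ref{t:gd}, with essentially no new work. First I would unwind the definition: $D^\emptyset = A^\emptyset/A^{>\emptyset}$, and since the containment rule $A^U \subset A^T$ for $T \subset U$ gives $A^U \subseteq A^{\{s\}}$ whenever $s \in U$, we have $A^{>\emptyset} = \sum_{s\in S} A^{\{s\}}$, so that $D^\emptyset = A\big/\sum_{s\in S}A^s$. But this is precisely the cokernel that Theorem~\ref{t:gd} (either its first assertion, $\ch^n(\gD)=D^\emptyset$, or its general form with $U=\emptyset$, since then $S-U=S$) identifies with $\ch^n(\gD)$. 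Theorem~\ref{t:gd} also asserts that $\ch^*(\gD)$ is free abelian (and concentrated in degree $n$). Therefore $D^\emptyset \cong \ch^n(\gD)$ is free abelian, which is the claim.

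For context I would note where the actual content sits: via Lemma~\ref{l:finite} one has $\ch^*(\gD) \cong H^*_c(\cu(\Phi,\gD^f))$, and Theorem~\ref{t:SI} says $\cu(\Phi,\gD^f)$ is $SI$, i.e.\ its compactly supported cohomology is free abelian and concentrated in the top degree $n$. That statement in turn rests on the piecewise Euclidean $\cat(0)$ metric on $\cu(\Phi,\gD^f)$ (Theorem~\ref{t:cat}), the Morse-theoretic criterion of Brady--McCammond--Meier (Theorem~\ref{t:bbm}), and the Cohen--Macaulayness of punctured spherical buildings (Theorem~\ref{t:pi2}).

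Since all of this has already been established, I do not expect any genuine obstacle for this corollary: it is a formal bookkeeping consequence of the identification $D^\emptyset = \ch^n(\gD)$ together with the freeness half of Theorem~\ref{t:gd}. The only point requiring a moment's care is verifying $A^{>\emptyset} = \sum_{s\in S}A^s$, so that the module named $D^\emptyset$ really does coincide with the cokernel appearing in Theorem~\ref{t:gd}; as noted above this is immediate from the containment rule for the $A^U$.
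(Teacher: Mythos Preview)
Your proposal is correct and matches the paper's approach exactly: the paper states Corollary~\ref{c:Dfree} with no proof, treating it as immediate from Theorem~\ref{t:gd}, which already asserts both that $\ch^n(\gD)=D^\emptyset$ and that this group is free abelian. Your additional verification that $A^{>\emptyset}=\sum_{s\in S}A^s$ and your remarks on where the real content lies are accurate and harmless elaborations.
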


\begin{remark}\label{r:duality}
We saw in Section~\ref{s:geom} that whenever $W$ is infinite, $\cu(W, \gD^f)$ is homeomorphic to Euclidean space $\rr^n$.  Hence, the compactly supported cohomology of $\cu(W, \gD^f)$ is that of $H^*_c(\rr^n)$.  Similarly, $\cu(R,\gD^f-(\gD^f)^U)$ is homeomorphic to $\rr^n$, for each $(S-U)$-residue $R$.
\end{remark}

\begin{remark}\label{r:pd}
$\cu(W,\gD^f)$  is a thickened version of $\cu(W,K)$. In 
the proof of Corollary~\ref{c:cat0} we explained the cellulation of $\cu(W,K)$ by ``Coxeter cells.''  The corresponding cellular chain complex, $C_*(\cu(W,K))$,  has the form
\begin{equation*}
\zz W\ \longleftarrow \bigoplus_{s\in S}H^s\ \longleftarrow
\bigoplus_{T\in \cs^{(2)}}H^T \longleftarrow\cdots
\end{equation*}
where $H^T$ is the representation induced from the sign representation of $W_T$ and where $\cs^{(k)}$ is the set of spherical subsets with $k$ elements (cf.~\cite[\S 8]{ddjo}).  The cochain complex $\cac^*(\gD)$ for $n-2\le *\le n$, looks like
\begin{equation*}
\cdots\mapright{}\ \bigoplus_{T\in \cs^{(2)}}A^T\mapright{}\  \bigoplus_{s\in S}A^s\mapright{}\ \zz W .
\end{equation*}
So, $C_*(\cu(W,K))$ and $C^{n-*}_c(\cu(W,\gD^f))$ are Poincar\'e dual. 
\end{remark}

\section{The Decomposition Theorem}\label{s:decomp}
We  want to prove a version of Theorem~\ref{t:gd} for lower dimensional spherical faces of $\gD$. Suppose $T$ is a spherical subset of $S$. Put $\gs=\gD_T$ and $m=n-|T|$.  Let $U$ be an arbitrary subset of $S-T$.  We continue the policy of omitting the coefficient system $\ci(A)$ from our notation.

\begin{proposition}\label{p:sigma}
Each of the following cohomology groups is concentrated in the top degree and is a free abelian group in that degree:  
\[
\ch^*(\gs,\gs^U),\  \ch^*(\gs^U,\partial (\gs^U)), \text{ and }\ \ch^*(\gs^U)
\]
(The top degrees are $m$, $m-1$, and $m-1$, respectively.)  Moreover,
\begin{align}
\ch^m(\gs,\gs^U)&=A^T\big/\sum_{s\in (S-T)-U} A^{T\cup\{s\}},\label{e:1}  \\
\ch^{m-1}(\gs^U,\partial (\gs^U))&=\sum_{s\in U} A^{T\cup\{s\}},\label{e:2}\\
\ch^{m-1}(\gs^U)&=\sum_{s\in U}A^{T\cup\{s\}} \big/ \sum_{\substack{s\in U\\t\in (S-T)-U}} A^{T\cup
\{s,t\}},\label{e:3}
\end{align}
\end{proposition}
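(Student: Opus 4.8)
The plan is to deduce Proposition~\ref{p:sigma} from Theorem~\ref{t:gd} by recognizing $\gs = \gD_T$, together with its mirror structure inherited from $\gD$, as a copy of the simplex $\gD(W_T^{\perp})$ for a suitable Coxeter system, or more directly by relating $\cu(\Phi,\gs^f)$ (and its relative versions) to realizations of residues of $\Phi$. First I would observe that $\gD_T$ is itself a simplex of dimension $m = n - |T|$ whose codimension-one faces are indexed by $S - T$: its face $(\gD_T)_s$ for $s \in S-T$ is just $\gD_{T \cup \{s\}}$. A spherical face of $\gD_T$ corresponds to a spherical subset of $S$ containing $T$, i.e.\ to an element of $\cs_{\ge T}$, and since $T$ is spherical this poset is the poset of spherical subsets of $(W_{\lk}, S-T)$ for an appropriate Coxeter system on $S - T$ (the ``link'' Coxeter system, whose nerve is $\Lk(\gs_T, L)$). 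Thus $\gs^f := (\gD_T)^f$ is the $X^f$-construction for this smaller simplex, and $\cu(R, \gs^f)$ for a type-$T$ residue $R$ of $\Phi$ is again a realization of a building — namely the building structure on the type-$(S-T)$ data attached to $R$ — to which Theorem~\ref{t:SI} (via Theorem~\ref{t:cat}) applies.

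The key steps, in order: (1) Identify $\gs = \gD_T$ as a mirrored simplex over $S - T$ with $\gs_s = \gD_{T \cup \{s\}}$, so that $\gs^U = \gD_T \cap \gD^U$ for $U \subseteq S-T$, and spherical faces of $\gs$ biject with $\cs_{\ge T}$. (2) Apply Lemma~\ref{l:finite} with $X = \gs$: since $\gs$ is a finite mirrored CW complex, $\ch^*(\gs) = H^*_c(\cu_{\gs^f})$, and similarly $\ch^*(\gs,\gs^U) = H^*_c(\cu_{\gs^f - \gs^U})$ using the relative form of that lemma (i.e.\ running the collaring argument of Lemma~\ref{l:cfin} relative to $\gs^U$). (3) Show $\cu(\Phi, \gs^f - \gs^U)$ is a disjoint union, over type-$T$ residues $R$, of spaces $\cu(R, \gs^f - \gs^U)$, each of which is $SI$: this follows from Theorem~\ref{t:SI2}/Corollary~\ref{c:SI3} applied to the building $R$ with its type-$(S-T)$ structure and the subset $U \subseteq S-T$, giving concentration in the top degree $m$ (resp.\ $m$ for the relative version, with the absolute $\ch^*(\gs)$ in degree $m$ as well) and freeness. (4) Read off the explicit cohomology groups by writing down the top of the cochain complex $\cac^*(\gs)$: we have $\cac^m(\gs) = A^T$ (the unique $m$-cell of $\gs$ is $\gs$ itself, with $S(\gs) = T$), and $\cac^{m-1}(\gs) = \bigoplus_{s \in S-T} A^{T \cup \{s\}}$; for the relative complex $\cac^*(\gs, \gs^U)$ we drop the cells in $\gs^U$, so $\cac^{m-1}(\gs,\gs^U) = \bigoplus_{s \in (S-T)-U} A^{T\cup\{s\}}$, and concentration in degree $m$ forces $\ch^m(\gs,\gs^U) = A^T / \sum_{s \in (S-T)-U} A^{T \cup \{s\}}$, which is \eqref{e:1}.

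For \eqref{e:2} and \eqref{e:3}, the idea is that $\gs^U$ (the union of the faces $\gD_{T\cup\{s\}}$, $s \in U$) is a mirrored subcomplex of $\gs$ of dimension $m-1$, and $\partial(\gs^U)$ is its subcomplex of cells lying in some $\gD_{T \cup \{s,t\}}$ with $t \notin T \cup U$; one applies Lemma~\ref{l:finite} again to the pairs $(\gs^U, \partial(\gs^U))$ and $(\gs^U, \emptyset)$, with $\cu$-realizations that are disjoint unions over type-$T$ residues of $SI$ spaces (again Corollary~\ref{c:SI3}, now for the building $R$ and an appropriate subset, noting $\gs^U$ is itself of the form $(\gD')^f$-complement for a still smaller simplex). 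Then the top-degree term of the relevant cochain complex is computed: $\cac^{m-1}(\gs^U, \partial(\gs^U)) = \bigoplus_{s\in U} A^{T\cup\{s\}}$ with the previous coboundary now zero (there is no $m$-cell in $\gs^U$), giving \eqref{e:2} once one checks the internal sum is direct — which is exactly the content of freeness plus the identification with $H^{m-1}_c$ of the disjoint union, or alternatively follows because $\cac^{m-1}(\gs^U,\partial(\gs^U)) \to \cac^{m-1}(\gs,\gs')$-type maps are injective; and $\cac^{m-1}(\gs^U) \to \cac^{m-2}(\gs^U)$ with $\cac^{m-2}(\gs^U) = \bigoplus_{s\in U,\, t \in (S-T)-U} A^{T\cup\{s,t\}}$ gives \eqref{e:3} as the cokernel.

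The main obstacle I expect is step (3)/(4): making precise that the various mirrored subcomplexes $\gs^f - \gs^U$, $\gs^U$, and $\gs^U - \partial(\gs^U)$ are genuinely of the form ``$\cu$ of a building modeled on some $\gD'^f$ or $\gD'^f - \gD'^V$,'' so that Corollary~\ref{c:SI3} genuinely applies — one must check the bookkeeping that a type-$T$ residue of $\Phi$ carries a building structure of type $(W_{S-T}^{\lk}, S-T)$ compatible with all the mirror structures, and that $\Lk(\gs_T, L)$ is indeed the nerve of that system so that the $\cat(0)$ machinery of Section~\ref{s:cat0} transfers. Once that identification is in hand, everything reduces to the already-proved $SI$ statements plus a bare cochain-complex computation at the top two degrees, and the directness of the internal sums in \eqref{e:2} and \eqref{e:3} is forced by the freeness and concentration assertions.
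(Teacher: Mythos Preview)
Your step~(3) has a genuine gap that cannot be filled as you propose. The claim that $\cu(\Phi,\gs^f-\gs^U)$ decomposes as a disjoint union over type-$T$ residues is false: for $x\in\gs=\gD_T$ one has $S(x)\supseteq T$, so the identification $(\gf,x)\sim(\gf',x)$ is by $S(x)$-residues, which are \emph{coarser} than $T$-residues, not finer. (Concretely: for the $(3,3,3)$ triangle group with $T=\{s\}$, the space $\cu(W,\gD_{\{s\}})$ is a connected graph, not a disjoint union of edges.) The disjoint-union argument in Corollary~\ref{c:SI3} works precisely because $S(x)\subseteq S-U$ for $x\in\gD^f-\gD^U$; here the containment goes the wrong way. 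More fundamentally, the object you would need---a building structure of type $(W^{\lk}_{S-T},S-T)$ on the set of $T$-residues of $\Phi$---does not exist: a type-$T$ residue is a building of type $(W_T,T)$, and the set of $T$-residues with the induced adjacencies is a chamber system over $S-T$ but not in general a building (already $W/W_T$ is not a Coxeter group). So Theorems~\ref{t:SI}--\ref{t:SI2} and Corollary~\ref{c:SI3}, which are about spaces $\cu(R,\gD^f-\gD^U)$ for an honest building $R$, simply do not apply; note too that $\gD^f-\gD^U$ and $(\gD_T)^f$ are entirely different subspaces of $\gD$.

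The paper's route avoids this altogether by inducting on $|T|$. The base case $T=\emptyset$ is Theorem~\ref{t:gd}. For the inductive step, write $T=T'\cup\{s\}$, $\gt=\gD_{T'}$, $U'=U\cup\{s\}$, and use the exact sequence of the triple $(\gt,\gt^{U'},\gt^U)$ together with the excision $\ch^{*}(\gt^{U'},\gt^U)\cong\ch^{*}(\gs,\gs^U)$ (since $\gt^{U'}=\gt^U\cup\gs$ with intersection $\gs^U$); this sandwiches $\ch^{*-1}(\gs,\gs^U)$ between two groups known by induction to be free abelian and concentrated in degree $m+1$. The concentration and freeness for $\ch^*(\gs^U,\partial(\gs^U))$ and $\ch^*(\gs^U)$ then follow from the exact sequences of $(\gs,\partial\gs,\gs^{S-U})$ and of $(\gs,\gs^U)$ respectively. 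Your step~(4) computation for \eqref{e:1} is correct once concentration is established and matches the paper; formulas \eqref{e:2} and \eqref{e:3} are read off from those same exact sequences (identifying $\ch^{m-1}(\gs^U,\partial(\gs^U))$ as the kernel of $A^T\to A^T/\sum_{s\in U}A^{T\cup\{s\}}$) rather than from a bare cochain computation.
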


\begin{proof}
We first prove $\ch^*(\gs,\gs^{U})$ is concentrated in degree $m$ and is free abelian.
The proof  is by induction on the number of elements in $T$.  It holds for $|T|=0$ by Theorem~\ref{t:gd}.  Suppose $T=T'\cup \{s\}$, $U'=U\cup \{s\}$ and $\gt=\gD_{T'}$.   The exact sequence of the triple $(\gt,\gt^{U'},\gt^{U})$ gives
\begin{equation*}
\cdots \to\ch^{*-1}(\gs,\gs^U) \to \ch^*(\gt, \gt^{U'})\to \ch^*(\gt,\gt^{U}) \to\cdots
\end{equation*}
(This uses the excision, $\ch^{*-1}(\gt^{U'},\gt^{U})\cong \ch^{*-1}(\gs,\gs^U)$.)
By inductive hypothesis, the last two terms are free abelian and concentrated in degree $m+1$.  Hence, $\ch^*(\gs,\gs^{U})$ is concentrated in degree $m$.  It is free abelian since it injects into a free abelian group.

That the other cohomology groups are free abelian and are concentrated in the top degree follows from various exact sequences.  For example, for $\ch^*(\gs^U,\partial (\gs^U))$, consider
the sequence of the triple $(\gs,\partial \gs, \gs^{S-U})$:
\begin{equation}\label{e:triple1}
\to\ch^{*-1}(\gs^U,\partial(\gs^U))\to \ch^*(\gs,\partial\gs)\to \ch^*(\gs,\gs^{S-U})\to 
\end{equation}
where we have used the excision $\ch^{*-1}(\partial\gs, \gs^{S-U})\cong \ch^{*-1}(\gs^U,\partial(\gs^U))$ to identify the first term.  The second and third terms are free abelian and concentrated in degree $m$; hence, the first term is free abelian and concentrated in degree $m-1$.
For $\ch^*(\gs^U)$, we have the exact sequence of the pair $(\gs,\gs^U)$:
\begin{equation}\label{e:pair1}
\to \ch^{*-1}(\gs^U)\to \ch^*(\gs,\gs^U)\to \ch^*(\gs)\to 
\end{equation}
where the second and third terms are free abelian and concentrated in degree $m$.  Hence, $\ch^*(\gs^U)$ is free abelian and is concentrated in degree $m-1$.

It remains to verify formulas \eqref{e:1}, \eqref{e:2} and \eqref{e:3}. We have $\cac^m(\gs,\gs^U)=A^T$ and $\cac^{m-1}(\gs,\gs^U)=\bigoplus_{s\notin U} A^{T\cup\{s\}}$, so
\[
\ch^m(\gs,\gs^U)=A^T \big{/} \sum_{s\notin U} A^{T\cup\{s\}},
\]
proving \eqref{e:1}.  
(In particular, $\ch^m(\gs)=D^T$ and $\ch^n(\gD)=D^\emptyset$.)
In the exact sequence \eqref{e:triple1}, we have $\ch^m(\gs,\partial \gs)=A^T$ and, by \eqref{e:1}, $\ch^m(\gs,\gs^{S-U})=A^T/\sum_{s\in U} A^{T\cup\{s\}}$; hence, \eqref{e:2}.
Using \eqref{e:1} to calculate the second and third terms of \eqref{e:pair1}, we get 
\[
\ch^{m-1}(\gs^U)=\sum_{s\in S-T}A^{T\cup\{s\}} \big/ \sum_{s\in (S-T)-U} A^{T\cup\{s\}}
\]
and this can be rewritten as  \eqref{e:3}.
\end{proof}

By Proposition~\ref{p:sigma}, 
$D^T$ is the free abelian group $\ch^m(\gD_T)$.  So,  for each $T\in \cs$, we can choose a splitting $\gi_T:D^T\to A^T$ of the projection map $A^T\to D^T$.  

\begin{definition}\label{d:ha}
Let $\ha^T:=\gi_T(D^T)$.  (It is a direct summand of the free abelian group $A^T$.)
\end{definition}

\begin{proposition}\label{p:sums}
\begin{align}
\ch^m(\gs,\gs^U)&=\bigoplus_{\substack{V\supseteq T\\V-T\subseteq U}} \ha^V \label{e:sum1}\\
\ch^{m-1}(\gs^U, \partial(\gs^U))&=\bigoplus_{\substack{V\supset T\\(V-T)\cap U\neq\emptyset}} \ha^V\label{e:sum2}\\
\ch^{m-1}(\gs^U)&=\bigoplus_{\substack{V\supset T\\V-T\subseteq U}} \ha^V \label{e:sum3}
\end{align}
\end{proposition}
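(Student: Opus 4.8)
The plan is to deduce all three identities from \eqref{e:sum1}, and to prove \eqref{e:sum1} by induction. Recall first that, by Proposition~\ref{p:sigma}, $D^T\cong\ch^m(\gD_T)$ is free abelian, so the chosen splitting $\gi_T\colon D^T\to A^T$ (see Definition~\ref{d:ha}) gives an internal direct sum $A^T=A^\ut\oplus\ha^T$, with $\ha^T\cong D^T$, and $A^\ut=\sum_{s\in S-T}A^{T\cup\{s\}}$. Throughout, a statement ``$H=\bigoplus_V\ha^V$'' is to be read as: the natural map $\bigoplus_V\ha^V\to H$ obtained by composing the inclusions $\ha^V\subseteq A^V\subseteq A^T$ with the projection of $A^T$ onto the relevant subquotient is an isomorphism. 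Insisting on this naturality, rather than settling for an abstract isomorphism of free abelian groups, is the only delicate point: it is what makes the case $U=S-T$ of \eqref{e:sum1} yield the internal decomposition $A^T=\bigoplus_{V\supseteq T}\ha^V$ of the Decomposition Theorem.

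To prove \eqref{e:sum1} I would induct on the pair $(|S-T|,|U|)$, ordered lexicographically. In the base case $U=\emptyset$ one has $\ch^m(\gs,\gs^\emptyset)=\ch^m(\gs)=D^T$ (the cochain complex terminating in $\bigoplus_{s\in S-T}A^{T\cup\{s\}}\to A^T\to 0$), and by construction of $\ha^T$ the composite $\ha^T\hookrightarrow A^T\twoheadrightarrow A^T/A^\ut=D^T$ is an isomorphism; since $V=T$ is the only $V\supseteq T$ with $V-T\subseteq\emptyset$, this is exactly \eqref{e:sum1}. For the inductive step, suppose $U\neq\emptyset$, choose $t\in U$, and set $U_-=U-\{t\}$ and $\gs_t=\gD_{T\cup\{t\}}$. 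By Proposition~\ref{p:sigma} all the groups below are concentrated in their top degrees, so the long exact sequence of the triple $(\gs,\gs^U,\gs^{U_-})$ collapses to a short exact sequence of free abelian groups $0\to\ch^{m-1}(\gs^U,\gs^{U_-})\to\ch^m(\gs,\gs^U)\to\ch^m(\gs,\gs^{U_-})\to 0$. Excision (using $\gs^U=\gs^{U_-}\cup\gs_t$) identifies the left-hand term with $\ch^{m-1}(\gs_t,\gs_t^{U_-})$, and the connecting map, composed with this identification, is induced by the inclusion $A^{T\cup\{t\}}\subseteq A^T$. Since $|S-(T\cup\{t\})|<|S-T|$, the outer inductive hypothesis gives $\ch^{m-1}(\gs_t,\gs_t^{U_-})=\bigoplus_{V\supseteq T\cup\{t\},\ V-T\subseteq U}\ha^V$ naturally, while the inductive hypothesis on $|U|$ gives $\ch^m(\gs,\gs^{U_-})=\bigoplus_{V\supseteq T,\ V-T\subseteq U_-}\ha^V$ naturally. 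These two index sets are disjoint with union $\{V\mid T\subseteq V,\ V-T\subseteq U\}$, and since the sequence splits and all of its maps are induced by the inclusions of the various $A^V$, the natural map $\bigoplus_{V\supseteq T,\ V-T\subseteq U}\ha^V\to\ch^m(\gs,\gs^U)$ carries the first block isomorphically onto the image of $\ch^{m-1}(\gs^U,\gs^{U_-})$ and the second isomorphically onto a complement; hence it is an isomorphism. (When $T\cup\{t\}\notin\cs$ both the left-hand term and the extra block of $V$'s vanish, so the step is vacuous.)

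Granting \eqref{e:sum1}, the remaining two identities follow formally. For \eqref{e:sum3}, the sequence \eqref{e:pair1} of the pair $(\gs,\gs^U)$ reduces (using $\ch^{m-1}(\gs)=0$ and $\ch^m(\gs)=D^T$) to $0\to\ch^{m-1}(\gs^U)\to\ch^m(\gs,\gs^U)\to D^T\to 0$; in top degree the right-hand map is the natural surjection $A^T/\sum_{s\in(S-T)-U}A^{T\cup\{s\}}\to A^T/A^\ut$, under which, by \eqref{e:sum1}, the $V=T$ summand maps isomorphically onto $D^T$ while each $\ha^V$ with $V\supsetneq T$ and $V-T\subseteq U$ maps to $0$; so $\ch^{m-1}(\gs^U)=\bigoplus_{V\supsetneq T,\ V-T\subseteq U}\ha^V$. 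For \eqref{e:sum2}, the sequence \eqref{e:triple1} of the triple $(\gs,\partial\gs,\gs^{S-U})$ reduces to $0\to\ch^{m-1}(\gs^U,\partial(\gs^U))\to\ch^m(\gs,\partial\gs)\to\ch^m(\gs,\gs^{S-U})\to 0$; by \eqref{e:sum1} the middle term is $A^T=\bigoplus_{V\supseteq T}\ha^V$ and the right term is $\bigoplus_{V\supseteq T,\ (V-T)\cap U=\emptyset}\ha^V$, and the map is the natural projection $A^T\to A^T/\sum_{s\in U}A^{T\cup\{s\}}$. A summand $\ha^V$ with $(V-T)\cap U\neq\emptyset$ lies in $A^{T\cup\{s\}}$ for some $s\in U$ and hence maps to $0$, while the summands with $(V-T)\cap U=\emptyset$ map isomorphically onto the target; therefore $\ch^{m-1}(\gs^U,\partial(\gs^U))=\bigoplus_{V\supsetneq T,\ (V-T)\cap U\neq\emptyset}\ha^V$, as claimed.

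The main obstacle is precisely the naturality bookkeeping in the inductive step for \eqref{e:sum1}: one must verify that the excision isomorphism and the connecting homomorphism of the triple $(\gs,\gs^U,\gs^{U_-})$ are induced by the literal inclusions $A^V\subseteq A^T$, so that the automatic splitting of a short exact sequence of free abelian groups is realized by the natural map out of $\bigoplus\ha^V$; without this one obtains only an abstract count of ranks and not the internal decomposition needed for the Decomposition Theorem. Everything else is formal: the vanishing input is Proposition~\ref{p:sigma}, the splittings exist because all groups in sight are free abelian, and non-spherical faces contribute nothing.
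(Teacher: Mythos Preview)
Your proof is correct, and it takes a somewhat different route from the paper's. The paper proves all three formulas simultaneously by a double induction on $\dim\gs$ and $|U|$: in each inductive step it first establishes \eqref{e:sum2} via the triple $(\gs^U,\gs_s\cup\partial(\gs^U),\partial(\gs^U))$, then \eqref{e:sum3} via the Mayer--Vietoris sequence for $\gs^U=\gs^{U'}\cup\gs_s$, and only then deduces \eqref{e:sum1} from \eqref{e:sum3} and \eqref{e:pair1}. You invert this logic: you prove \eqref{e:sum1} directly by a double induction using the triple $(\gs,\gs^U,\gs^{U_-})$ and the excision $\ch^{*}(\gs^U,\gs^{U_-})\cong\ch^{*}(\gs_t,\gs_t^{U_-})$, and then read off \eqref{e:sum3} and \eqref{e:sum2} from \eqref{e:sum1} via \eqref{e:pair1} and \eqref{e:triple1}. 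Your route is arguably more economical, since \eqref{e:sum1} is the formula that actually feeds into the Decomposition Theorem, and you reach it without detouring through Mayer--Vietoris or the auxiliary triple \eqref{e:triple2}. You are also more explicit than the paper about the naturality bookkeeping---that the connecting maps and excision isomorphisms are induced by the literal inclusions $A^V\subseteq A^T$, so that the five-lemma step yields the \emph{internal} decomposition rather than a mere rank count; the paper's ``substituting these into\ldots'' leaves this implicit. One small notational point: in your derivation of \eqref{e:sum2} you invoke \eqref{e:sum1} for $\ch^m(\gs,\gs^{S-U})$; strictly speaking $S-U\not\subseteq S-T$ when $T\neq\emptyset$, but with the paper's convention (the mirrors of $\gs=\gD_T$ are indexed by $S-T$, so $\gs^{S-U}$ effectively means $\gs^{(S-T)-U}$) your computation $\{V\supseteq T:\,V-T\subseteq S-U\}=\{V\supseteq T:\,(V-T)\cap U=\emptyset\}$ is exactly right.
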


\begin{proof}
Assume by induction that \eqref{e:sum1} through \eqref{e:sum3} hold for $\dim \gs=m-1$ and assume as well that they  hold  when $\dim \gs=m$ and  $U$ is replaced by $U'$ with $|U'|<|U|$.  Write $U=\{s\}\cup U'$, for some $s\in U$.  Consider the exact sequence of the triple $(\gs^U,\gs_s\cup \partial (\gs^U),\partial (\gs^U))$:  
\begin{equation}\label{e:triple2}
0\to \ch^{m-1}(\gs^{U'},\partial(\gs^{U'}))\to \ch^{m-1}(\gs^U,\partial (\gs^U))\to \ch^{m-1}(\gs_s, (\gs_s)^{S-U'})\to 0,
\end{equation}
where we have used the excisions $\ch^{*}(\gs^U,\gs_s\cup \partial (\gs^U))=\ch^{*}(\gs^{U'},\partial(\gs^{U'}))$ and $\ch^{*}(\gs_s\cup \partial (\gs^U),\partial (\gs^U))=\ch^{*}(\gs_s,(\gs_s)^{S-U'})$ to rewrite the first and third terms.
By induction, 
\begin{align*}
\ch^{m-1}(\gs^{U'},\partial(\gs^{U'}))
&=\bigoplus_{\substack{V\supset T\\V\cap U'\neq\emptyset}} \ha^V   ,\\
\ch^{m-1}(\gs_s,(\gs_s)^{S-U'}) &=
\bigoplus_{\substack{V\supseteq T\cup\{s\}\\V\subseteq T\cup\{s\}\cup U'}} \ha^V 
\end{align*}
Substituting these into the last two terms of \eqref{e:triple2}, we get
\[
\ch^{m-1}(\gs^U,\partial(\gs^U))=\bigoplus_{\substack{V\supset T\\V\cap U\neq \emptyset}} \ha^V ,
\]
which is \eqref{e:sum2}.  
Next consider the Mayer-Vietoris sequence of $\gs^U=\gs^{U'}\cup \gs_s$:
\begin{equation}\label{e:MV1}
0\to \ch^{m-2}((\gs_s)^{U'})\to \ch^{m-1}(\gs^U)\to \ch^{m-1}(\gs^{U'})\oplus \ch^{m-1}(\gs_s)\to 0.
\end{equation}
By induction,
\[
\ch^{m-2}((\gs_s)^{U'})=\bigoplus_{\substack{V\supset T\cup\{s\}\\V-(T\cup\{s\})\subseteq U'}} \ha^V
\quad\text{and}\quad
\ch^{m-1}(\gs^{U'})=\bigoplus_{\substack{V\supset T\\V-T\subseteq U'}} \ha^V
\]
and $\ch^{m-1}(\gs_s))=\ha^{T\cup\{s\}}$.  Substituting these into \eqref{e:MV1} we get
\[
\ch^{m-1}(\gs^U)=\bigoplus_{\substack{V\supset T\\V-T\subseteq U}} \ha^V,
\]
which is \eqref{e:sum3}.  Sequence \eqref{e:pair1} is
\[
0\to \ch^{m-1}(\gs^U)\to \ch^m(\gs, \gs^U)\to \ch^m(\gs)\to 0.
\]
Substituting \eqref{e:sum3} for the first term and $\ha^T$ for the third, we get formula \eqref{e:sum1} for the middle term.
\end{proof}

We have $\ch^m(\gs,\partial \gs)=A^T$.  Hence, in the special case $U=S-T$ formulas \eqref{e:sum1} and \eqref{e:sum3} give the following theorem (cf. \cite{s}, \cite[Thm. 9.11]{ddjo}, \cite[Cor. 3.3]{ddjo2}).

\begin{theorem}\label{t:decomp}
\textup{(The Decomposition Theorem).}  For each subset $T$ of $S$
\[
A^T=\bigoplus_{V\supseteq T} \ha^V.
\]
\end{theorem}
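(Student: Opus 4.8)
The plan is to read off the theorem as the special case $U = S-T$ of the decomposition formulas of Proposition~\ref{p:sums}, so that essentially no new work is needed once that proposition is in hand. If $T \notin \cs$ there is nothing to prove, since then $A^T = 0$ and no spherical subset contains $T$; so I would assume $T \in \cs$ and set $\gs := \gD_T$, $m := n - |T|$, exactly as in the run-up to Proposition~\ref{p:sigma}.

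First I would record the two identifications that make the specialization work. The face $\gs = \gD_T$ is an $m$-simplex whose codimension-one faces are precisely the $\gs_s = \gs \cap \gD_s = \gD_{T \cup \{s\}}$ with $s \in S - T$; hence $\partial\gs = \bigcup_{s \in S - T}\gs_s = \gs^{S-T}$. Moreover, since all $(m-1)$-cells of $\gs$ lie in $\partial\gs$ and $S(\gs) = T$, the cochain complex $\cac^*(\gs, \partial\gs)$ is concentrated in degree $m$ with $\cac^m(\gs, \partial\gs) = A^T$, so $\ch^m(\gs, \partial\gs) = A^T$.

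Then I would apply formula \eqref{e:sum1} with $U = S - T$. Its side condition ``$V - T \subseteq U$'' now reads ``$V - T \subseteq S - T$'', which holds for every $V \supseteq T$, so the right-hand side collapses to $\bigoplus_{V \supseteq T}\ha^V$; combined with the identifications above this gives $A^T = \ch^m(\gs, \gs^{S-T}) = \bigoplus_{V \supseteq T}\ha^V$. (The twin formula \eqref{e:sum3} yields the parallel decomposition of $\ch^{m-1}(\partial\gs)$, but \eqref{e:sum1} is all that the theorem requires.)

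I do not expect an obstacle at this final step, because the substance has already been spent: Proposition~\ref{p:sigma} provides freeness and concentration in the top degree --- which rests, via Corollary~\ref{c:SI3}, on the $SI$ property of $\cu(\Phi, \gD^f - \gD^U)$, hence on the $\cat(0)$ geometry of Section~\ref{s:cat0}, the Morse-theoretic criterion of Theorem~\ref{t:bbm}, and the punctured-link vanishing of Theorem~\ref{t:pi2} --- and Proposition~\ref{p:sums} then assembles the summands $\ha^V$ by a simultaneous induction on $|T|$ and $|U|$ through the relevant exact sequences of triples and Mayer--Vietoris sequences. If one wanted to avoid quoting Proposition~\ref{p:sums}, the only genuinely delicate part would be that double induction: one must keep the three families of long exact sequences compatible with the chosen splittings $\gi_T : D^T \to A^T$ so that each connecting map respects the internal sums being built. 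Given Proposition~\ref{p:sums}, however, the theorem is immediate.
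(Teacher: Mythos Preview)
Your proposal is correct and follows essentially the same route as the paper: the theorem is read off from Proposition~\ref{p:sums} by specializing to $U=S-T$, using the identifications $\partial\gs=\gs^{S-T}$ and $\ch^m(\gs,\partial\gs)=A^T$. The paper states this in a single sentence preceding the theorem; your version supplies the same argument with a bit more detail (the non-spherical case, why the side condition in \eqref{e:sum1} becomes vacuous) and an accurate summary of the upstream dependencies.
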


For any $\zz$-submodule $B\subset A$, put $B^T:=A^T\cap B$.  
Suppose we have a direct sum decomposition of $\zz$-modules, $A=B\oplus C$, so that for each $T\subseteq S$, $A^T=B^T\oplus C^T$.  
As explained in \cite[\S 2]{ddjo2} this leads to a decomposition of coefficient systems $\ci(A)=\ci(B)\oplus \ci(C)$ so that for any mirrored CW complex $X$ we have a decomposition of cochain complexes $\cac^*(X;\ci(A))=\cac^*(X;\ci(B))\oplus \cac^*(X;\ci(C))$.  Since
\[
(\ha^V)^T=
\begin{cases}
\ha^V	&\text{if $V\supseteq T$,}\\
0		&\text{otherwise,}
\end{cases}
\]
the formula in the Decomposition Theorem satisfies $(A^\emptyset)^T=\bigoplus_{V\supseteq T}(\ha^V)^T$, for all $T\subseteq S$.  So, we get a decomposition of coefficient systems $\ci(A)=\bigoplus \ha^V$ and a corresponding decomposition of cochain complexes:
\begin{equation}\label{e:decomp}
\cac^*(X;\ci(A))=\bigoplus_{V} \cac^*(X;\ci(\ha^V))
\end{equation}

\section{Cohomology of buildings}\label{s:cohomology}
Just as in \cite[Thm. 3.5]{ddjo2}, the Decomposition Theorem (Theorem~\ref{t:decomp}) implies the following.  

\begin{theorem}\label{t:main}
\textup{(cf. \cite[Cor.~8.2]{dj}, \cite[Thm.~10.3]{ddjo}, \cite[Thm.~3.5]{ddjo2}).}  Suppose $X$ is a finite, mirrored CW complex.  Then
\[
H^*_c(\cu(\Phi,X^f))\cong \bigoplus_{T\in \cs} H^*(X,X^{S-T})\otimes \hA^T.
\]
\end{theorem}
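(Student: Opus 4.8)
The plan is to combine the coefficient-system decomposition \eqref{e:decomp} of the previous section with the relative version of Theorem~\ref{t:gd} recorded in Proposition~\ref{p:sigma}, and to mimic the argument of \cite[Thm.~3.5]{ddjo2}. By Lemma~\ref{l:finite}, since $X$ is a finite mirrored CW complex we have $\ch^*(X;\ci(A)) = H^*_c(\cu(\Phi,X^f))$, so it suffices to compute $\ch^*(X;\ci(A))$. The Decomposition Theorem (Theorem~\ref{t:decomp}) shows that $(A^\emptyset)^T = \bigoplus_{V \supseteq T}(\ha^V)^T$ for every $T \subseteq S$, whence the splitting of coefficient systems $\ci(A) = \bigoplus_{V \in \cs} \ci(\ha^V)$ and the corresponding direct sum decomposition of cochain complexes in \eqref{e:decomp}. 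Therefore
\[
\ch^*(X;\ci(A)) = \bigoplus_{V \in \cs} \ch^*(X;\ci(\ha^V)),
\]
and it remains to identify each summand with $H^*(X,X^{S-V}) \otimes \ha^V$.

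The key point is that the coefficient system $\ci(\ha^V)$ is concentrated on cells $c$ with $S(c) \supseteq V$: by Definition~\ref{d:cochains} the group $\cac^i(X;\ci(\ha^V)) = \prod_{c \in X^{(i)}} (\ha^V)^{S(c)}$, and $(\ha^V)^{S(c)} = \ha^V$ if $V \subseteq S(c)$ and $0$ otherwise. A cell $c$ satisfies $V \subseteq S(c)$ precisely when $c \subseteq X_V = \bigcap_{s \in V} X_s$. Moreover $c$ contributes to the \emph{relative} cochains vanishing on $X^{S-V} = \bigcup_{s \notin V} X_s$ exactly when $c \not\subseteq X_s$ for any $s \notin V$, i.e. when $S(c) \cap (S-V) = \emptyset$, which together with $V \subseteq S(c)$ forces $S(c) = V$. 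Thus $\cac^*(X;\ci(\ha^V))$ is, as a cochain complex, identified with $C^*(X_V, (X_V)^{S-V}) \otimes \ha^V$ — the relative cellular cochain complex of the pair $(X_V, X^{S-V} \cap X_V)$ with coefficients in the free abelian group $\ha^V$ — since $\ha^V$ is a fixed group and the coboundary in $\ci(\ha^V)$ is just the ordinary cellular coboundary tensored with the identity on $\ha^V$ (the inclusions $A^U \hookrightarrow A^T$ appearing in Section~\ref{s:A} restrict, for the summand $\ha^V$, to either the identity or the zero map). Taking cohomology and using that $\ha^V$ is free (hence flat) gives $\ch^*(X;\ci(\ha^V)) \cong H^*(X_V,(X_V)^{S-V}) \otimes \ha^V$. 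Finally, one checks that $H^*(X_V,(X_V)^{S-V})$ can be replaced by $H^*(X,X^{S-V})$: by excision, collapsing the contractible-to-the-relevant-part complement, or more directly because the pair $(X,X^{S-V})$ deformation retracts onto $(X_V \cup X^{S-V}, X^{S-V})$ which by excision has the cohomology of $(X_V,(X_V)^{S-V})$; alternatively this identification is exactly the one used in \cite[\S3]{ddjo2}, and I would cite it there.

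Assembling the pieces,
\[
H^*_c(\cu(\Phi,X^f)) = \ch^*(X;\ci(A)) = \bigoplus_{V \in \cs} \ch^*(X;\ci(\ha^V)) \cong \bigoplus_{T \in \cs} H^*(X,X^{S-T}) \otimes \hA^T,
\]
which is the assertion of the theorem. The main obstacle I anticipate is the bookkeeping in the last identification — carefully justifying that the cochain subcomplex of $C^*_\fin(\cu_X,\cu_{\gU(X)})$ picked out by the summand $\ci(\ha^V)$ really is the relative complex computing $H^*(X,X^{S-V}) \otimes \ha^V$ and not merely something with the same cohomology in a range — and making sure the coboundary maps match up (that the off-diagonal inclusions $A^U \hookrightarrow A^T$ genuinely restrict to $0$ on the $\ha$-summands that are not nested). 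Once the decomposition \eqref{e:decomp} is in hand, however, this is essentially the same computation as in \cite{ddjo2}, so the argument should go through verbatim with $\zz W$ replaced by $A$ and the equivariant language replaced by the coefficient-system language of Section~\ref{s:A}.
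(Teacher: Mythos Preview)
Your overall strategy is exactly the paper's: invoke Lemma~\ref{l:finite}, use the decomposition \eqref{e:decomp}, and identify each summand $\cac^*(X;\ci(\ha^V))$ with an ordinary relative cochain complex. However, the identification step contains a genuine error: you have the inclusion backwards. Recall that for a submodule $B\subseteq A$ one sets $B^T:=B\cap A^T$, and the paper records (just before \eqref{e:decomp}) that
\[
(\ha^V)^T=\begin{cases}\ha^V & \text{if }V\supseteq T,\\ 0 & \text{otherwise.}\end{cases}
\]
With $T=S(c)$ this says $(\ha^V)^{S(c)}=\ha^V$ precisely when $S(c)\subseteq V$, not when $V\subseteq S(c)$. (Indeed $\ha^V\subseteq A^V$, and $A^V\subseteq A^{S(c)}$ exactly when $S(c)\subseteq V$; conversely the Decomposition Theorem gives $A^{S(c)}=\bigoplus_{U\supseteq S(c)}\ha^U$, so if $V\not\supseteq S(c)$ the intersection is $0$.)

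With the correct condition the cells that contribute are those with $S(c)\subseteq V$, i.e.\ $S(c)\cap(S-V)=\emptyset$, i.e.\ $c\not\subseteq X^{S-V}$. Hence one obtains directly
\[
\cac^*(X;\ci(\ha^V))=C^*(X,X^{S-V})\otimes \ha^V,
\]
and no excision or deformation-retraction argument is needed. Your detour through $C^*(X_V,(X_V)^{S-V})$ is both unnecessary and unjustified: for a general finite mirrored CW complex $X$ there is no reason for $(X,X^{S-V})$ to have the same cohomology as $(X_V,X_V\cap X^{S-V})$, since $X$ need not deformation retract onto $X_V\cup X^{S-V}$. Once the inclusion is corrected, your proof becomes the paper's proof verbatim.
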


\begin{proof}
By Lemma~\ref{l:finite}, $H^*_c(\cu(\Phi,X^f))$ is the cohomology of $\cac^*(X;\ci(A))$.  
Formula \eqref{e:decomp} gives a decomposition of cochain complexes:
\begin{equation*}
\cac^*(X;\ci(A))=\bigoplus_{T\in \cs} \cac^*(X;\ci(\ha^T))
\end{equation*}
We have
\[
\cac^k(X;\ci(\ha^T))=\prod_{c\in X^{(k)}}\ha^T\cap A^{S(c)}
=\prod_{\substack{c\in X^{(k)}\\ c\not\subseteq X^{S-T}}} \ha^T.
\]
So, an element of $\cac^k(X;\ci(\ha^T))$ is just an ordinary $\ha^T$-valued cochain on $X$ which vanishes on $X^{S-T}$, i.e., 
\[
\cac^*(X;\ci(\ha^T))=C^*(X,X^{S-T})\otimes \ha^T; \quad \text{so,}
\]
\[
\cac^*(X;\ci(A))=\bigoplus_{T\in \cs} C^*(X,X^{S-T})\otimes \ha^T.
\]
Taking cohomology, we get the result.
\end{proof}

The most important special case of the previous theorem is the following.

\begin{corollary}\label{c:main}
\[
H^*_c(\cu(\Phi, K))\cong \bigoplus_{T\in \cs} H^*(K,K^{S-T})\otimes \ha^T.
\]
\end{corollary}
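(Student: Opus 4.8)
The plan is to read off Corollary~\ref{c:main} as the special case $X = K$ of Theorem~\ref{t:main}. The only points that need checking are that $K$, with its mirror structure $K_s = |\cs_{\ge\{s\}}|$, satisfies the hypotheses of Theorem~\ref{t:main}, and that $K^f = K$, so that the left-hand side of that theorem becomes $H^*_c(\cu(\Phi,K))$.

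First I would observe that $K$ is a \emph{finite} mirrored CW complex: $S$ is finite (it indexes the codimension-one faces of the simplex $\gD$ of dimension $|S|-1$), hence the poset $\cs$ of spherical subsets is finite, and $K = |\cs|$ is a finite simplicial complex in which each $K_s = |\cs_{\ge\{s\}}|$, and more generally each $K_T = |\cs_{\ge T}|$, is a subcomplex. Next I would check that $K$ is $W$-finite, i.e. $\gU(K)=\emptyset$, so that $K^f = K - \gU(K) = K$. Recall $\gU(K)=\bigcup_{T\notin\cs}K_T$. For any nonspherical $T$ there is no spherical $U$ with $U\supseteq T$ (a subset of a spherical subset is spherical, since $W_{T'}\le W_T$ is finite when $W_T$ is), so $\cs_{\ge T}=\emptyset$, whence $K_T = |\cs_{\ge T}| = \emptyset$. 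Therefore $\gU(K)=\emptyset$ and $K^f = K$.

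Finally, applying Theorem~\ref{t:main} with $X = K$ gives
\[
H^*_c(\cu(\Phi, K)) = H^*_c(\cu(\Phi,K^f)) \cong \bigoplus_{T\in\cs} H^*(K, K^{S-T}) \otimes \hA^T ,
\]
which is the assertion. I do not expect any genuine obstacle at this stage: all of the content has already been packaged into Theorem~\ref{t:main}, which rests on the Decomposition Theorem (Theorem~\ref{t:decomp}), and ultimately into the $\cat(0)$ and $SI$ results of Sections~\ref{s:cat0}–\ref{s:morse}. If one preferred a self-contained argument, one could instead repeat the proof of Theorem~\ref{t:main} verbatim with $X = K$: use Lemma~\ref{l:finite} to identify $H^*_c(\cu(\Phi,K))$ with $\ch^*(K;\ci(A))$ (noting that $\cu(\Phi,K)$ is locally finite, so $H^*_c = H^*_\fin$), apply the decomposition of coefficient systems \eqref{e:decomp} furnished by Theorem~\ref{t:decomp} to split the cochain complex $\cac^*(K;\ci(A))$ as $\bigoplus_{T\in\cs} C^*(K,K^{S-T})\otimes\hA^T$, and pass to cohomology.
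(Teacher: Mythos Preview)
Your proposal is correct and matches the paper's approach: the paper presents Corollary~\ref{c:main} simply as ``the most important special case'' of Theorem~\ref{t:main}, leaving implicit exactly the verifications you spell out (that $K$ is a finite mirrored CW complex and that $K^f=K$). Your checks that $\cs$ is finite and that $K_T=\emptyset$ for nonspherical $T$ are the right ones.
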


\section{The $G$-module structure on cohomology}
Assume $X$ has a $W$-finite mirror structure (i.e., $X=X^f$).
Suppose $G$ is a group of automorphisms of $\Phi$.  Then $A$, $A^T$, $A^\ut$ and $D^T$ are naturally right $G$-modules and so is the cochain complex $C^*_c(\cu(\Phi,X))$ as is its cohomology.  The discussion in Section~\ref{s:cohomology}  is well adapted to studying the $G$-module structure of $H^*_c(\cu(\Phi,X))$.  As in \cite{ddjo2}, we should not expect a direct sum splitting of $G$-modules analogous nonequivariant splitting of Theorem~\ref{t:main}; rather there should be a filtration of cohomology by $G$-submodules with associated graded terms similar to those in the direct sum.  We show below that this is the case.

For each nonnegative integer $p$ define a $G$-submodule $F_p$ of $A$ by 
\[
F_p:=\sum_{|T|\le p} A^T.
\] 
This gives a decreasing filtration:
\begin{equation}\label{e:f}
A=F_0\supset\cdots \supset F_p\supset F_{p+1}\cdots .
\end{equation}
As in \cite{ddjo2}, it follows from the Decomposition Theorem that the associated graded terms are
\[
F_p/F_{p+1}=\bigoplus_{|T|=p} D^T.
\]

As in Section~\ref{s:A}, we get a coefficient system $\ci(F_p)$, a cochain complex 
$\cac^*(X;\ci(F_p))$ and corresponding cohomology groups $\ch^*(X;\ci(F_p))$.
The filtration \eqref{e:f} leads to a filtration  of $\ch^*(X;\ci(A))$ ($=H^*_c(\cu(\Phi,X))$) by right $G$-modules,

\begin{equation}\label{e:f2}
\ch^*(X;\ci(A))\supset \cdots \ch^*(X;\ci(F_p))\supset \ch^*(X;\ci(F_{p+1}))\cdots .
\end{equation}
The Decomposition Theorem implies that
\[
0\mapright{}\,\ch^*(X;\ci(F_{p+1}))\mapright{}\,\ch^*(X;\ci(F_p))\mapright{}\,
H^*\left(\frac{\cac^*(X;\ci(F_p))}{\cac^*(X;\ci(F_{p+1}))}\right)\mapright{} \,0
\]
is short exact.  From this we deduce the following.

\begin{theorem}\label{t:Gmodule}
Suppose $G$ is a group of automorphisms of $\Phi$.  Then the filtration \eqref{e:f2} of $H^*_c(\cu(\Phi,X))$ by right $G$-modules  has associated graded term,
\[
\ch^*(X;F_p)/\ch^*(X;F_{p+1})\cong \bigoplus _{T\in \cs^{(p)}} H^*(X,X^{S-T})\otimes D^T.
\]
\end{theorem}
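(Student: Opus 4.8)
The plan is to derive Theorem~\ref{t:Gmodule} from the already-established Decomposition Theorem (Theorem~\ref{t:decomp}) by a filtration argument that mimics the non-equivariant computation of Theorem~\ref{t:main}, keeping careful track of the $G$-action at every stage. First I would observe that since $X=X^f$, Lemma~\ref{l:finite} identifies $\ch^*(X;\ci(A))$ with $H^*_c(\cu(\Phi,X))$, and because every $A^T$ is a $G$-submodule of $A$, each $F_p=\sum_{|T|\le p}A^T$ is a $G$-submodule; thus the coefficient systems $\ci(F_p)$, the cochain complexes $\cac^*(X;\ci(F_p))$, and their cohomology are all complexes of $G$-modules, and the filtration \eqref{e:f2} is by $G$-submodules. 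The key structural input is that the Decomposition Theorem yields $F_p=\bigoplus_{|T|\le p}\ha^T$ and $F_p/F_{p+1}\cong\bigoplus_{|T|=p}D^T$ as abelian groups; I would note these isomorphisms need not be $G$-equivariant in the splitting direction, but the quotient isomorphism $F_p/F_{p+1}\cong\bigoplus_{|T|=p}D^T$ \emph{is} $G$-equivariant since both sides are honest quotients of $G$-modules and the $\ha^V$ with $|V|=p$ map isomorphically onto the $D^V$ summands.

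Next I would analyze the quotient cochain complex $\cac^*(X;\ci(F_p))/\cac^*(X;\ci(F_{p+1}))$. By definition $\cac^i(X;\ci(F_p))=\prod_{c\in X^{(i)}}F_p^{S(c)}$ where $F_p^{T}=A^T\cap F_p$, and using $A^T=\bigoplus_{V\supseteq T}\ha^V$ together with $F_p=\bigoplus_{|V|\le p}\ha^V$ one gets $F_p^T=\bigoplus_{V\supseteq T,\,|V|\le p}\ha^V$, hence $(F_p/F_{p+1})^T=\bigoplus_{V\supseteq T,\,|V|=p}\ha^V\cong\bigoplus_{V\in\cs^{(p)},\,V\supseteq T}D^V$, again $G$-equivariantly. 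Therefore, exactly as in the proof of Theorem~\ref{t:main}, the quotient cochain complex splits as a direct sum over $T\in\cs^{(p)}$ of $C^*(X,X^{S-T})\otimes D^T$ (the summand for $T$ picks out cochains supported on cells $c$ with $S(c)=T$, equivalently $c\not\subseteq X^{S-T}$, since a residue-constancy condition forces $S(c)\supseteq$ the relevant $V$). Since $X$ is a finite complex with trivial $G$-action on $X$ itself, $C^*(X,X^{S-T})$ carries the trivial $G$-action and the tensor-product $G$-structure is the obvious one on $D^T$; taking cohomology gives $H^*\big(\cac^*(X;\ci(F_p))/\cac^*(X;\ci(F_{p+1}))\big)\cong\bigoplus_{T\in\cs^{(p)}}H^*(X,X^{S-T})\otimes D^T$ as $G$-modules.

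Finally I would assemble the pieces via the long exact sequence of the pair of cochain complexes $\cac^*(X;\ci(F_{p+1}))\hookrightarrow\cac^*(X;\ci(F_p))$. The Decomposition Theorem makes the short exact sequence of cochain complexes $0\to\cac^*(X;\ci(F_{p+1}))\to\cac^*(X;\ci(F_p))\to\cac^*(X;\ci(F_p))/\cac^*(X;\ci(F_{p+1}))\to0$ degreewise split (as abelian groups), and I would argue, exactly as in \cite{ddjo2}, that the resulting long exact sequence degenerates into the displayed short exact sequences
\[
0\to\ch^*(X;\ci(F_{p+1}))\to\ch^*(X;\ci(F_p))\to H^*\!\left(\frac{\cac^*(X;\ci(F_p))}{\cac^*(X;\ci(F_{p+1}))}\right)\to0,
\]
the connecting maps vanishing because of the concentration-in-top-degree freeness coming from Proposition~\ref{p:sigma}/Theorem~\ref{t:gd} applied to each summand $H^*(X,X^{S-T})\otimes D^T$ together with the analogous statement for $\ch^*(X;\ci(F_{p+1}))$ — concretely, one runs an induction downward in $p$ showing every term in sight is free abelian and the relevant Ext/connecting obstructions vanish. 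All maps here are $G$-equivariant since they come from $G$-equivariant chain maps, so the short exact sequence identifies $\ch^*(X;\ci(F_p))/\ch^*(X;\ci(F_{p+1}))$ with $\bigoplus_{T\in\cs^{(p)}}H^*(X,X^{S-T})\otimes D^T$ as right $G$-modules, which is the theorem. The main obstacle I anticipate is not the algebra of the splitting but verifying that the short exact sequence of cohomology groups is genuinely exact on the left — i.e.\ that $\ch^*(X;\ci(F_{p+1}))\to\ch^*(X;\ci(F_p))$ is injective and the connecting homomorphism vanishes — which requires the freeness/top-degree-concentration bookkeeping and is precisely the point where one leans on Proposition~\ref{p:sigma} and the inductive structure already set up in Section~\ref{s:decomp}, rather than on anything new.
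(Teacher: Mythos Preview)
Your overall strategy matches the paper's: use the Decomposition Theorem to identify $F_p/F_{p+1}$ and the quotient cochain complex, then show the long exact sequence degenerates into short exact sequences. However, your justification for the degeneration is wrong, and this is the one genuine gap.

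You claim the connecting maps vanish ``because of the concentration-in-top-degree freeness coming from Proposition~\ref{p:sigma}/Theorem~\ref{t:gd} applied to each summand $H^*(X,X^{S-T})\otimes D^T$,'' and you anticipate needing a downward induction on $p$ to show ``every term in sight is free abelian.'' But Proposition~\ref{p:sigma} and Theorem~\ref{t:gd} are statements about $\ch^*(\gD_T;\ci(A))$ for the simplex $\gD$ and its faces; they say nothing whatsoever about the ordinary cohomology $H^*(X,X^{S-T})$ for an arbitrary finite mirrored complex $X$, and indeed these groups need be neither free nor concentrated in any particular degree. A merely degreewise splitting of a short exact sequence of complexes does not force connecting maps to vanish, so your argument as written does not go through.

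The correct reason --- and the one the paper uses --- is much simpler. The Decomposition Theorem gives a splitting not just degreewise but at the level of \emph{coefficient systems}, and hence of \emph{cochain complexes}: this is precisely formula~\eqref{e:decomp}, which yields
\[
\cac^*(X;\ci(F_p))=\cac^*(X;\ci(F_{p+1}))\oplus\bigoplus_{|T|=p}\cac^*(X;\ci(\ha^T))
\]
as complexes, because the inclusions $A^U\hookrightarrow A^{U'}$ that define the coboundary respect the $\ha^V$-decomposition. A direct-sum decomposition of complexes immediately gives a direct-sum decomposition of cohomology, so $\ch^*(X;\ci(F_{p+1}))\to\ch^*(X;\ci(F_p))$ is (non-equivariantly) split injective and the connecting homomorphism vanishes automatically. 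No freeness, no top-degree concentration, and no induction on $p$ are required.
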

\vspace{.2cm}

\noindent
\textbf{Cocompact $\boldsymbol{G}$-action}.  In this final paragraph we assume that there are only finitely many $G$-orbits on $\Phi$ and that $X$ is a finite complex. 
These hypotheses imply that the quotient space $\cu(\Phi,X)/G$ is compact and since $X$ is a finite complex, each of  the cohomology groups $H^*(X)$ is finitely generated. As a corollary to Theorem~\ref{t:Gmodule}, we have the following.

\begin{corollary}\label{c:fingen}
With the above hypotheses,  $H^*_c(\cu(\Phi,X))$ is a finitely generated $G$-module.  
\end{corollary}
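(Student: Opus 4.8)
The plan is to deduce the corollary from the associated-graded description of Theorem~\ref{t:Gmodule}, reducing it to the single assertion that, for each $T\in\cs$, the module $A^T$ is finitely generated over $\zz G$. One should not try to argue directly on cochains: although $\cac^k(X;\ci(A))=\prod_{c\in X^{(k)}}A^{S(c)}$ is a finite product of $\zz G$-modules because $X$ is finite, the ring $\zz G$ need not be Noetherian (e.g.\ when $G$ contains a nonabelian free subgroup), so finite generation of the $\cac^k$ does not by itself give finite generation of the subquotient $H^*_c(\cu(\Phi,X))=\ch^*(X;\ci(A))$. Instead I would use the filtration \eqref{e:f2} by right $\zz G$-submodules. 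It is finite and eventually trivial: for $p>|S|$ there is no $T\subseteq S$ with $|T|\ge p$, so $F_p=0$ and hence $\ch^*(X;\ci(F_p))=0$. A module admitting a finite filtration by submodules with finitely generated successive quotients is itself finitely generated, so it suffices to show that each associated graded term
\[
\ch^*(X;\ci(F_p))/\ch^*(X;\ci(F_{p+1}))\cong\bigoplus_{T\in\cs^{(p)}}H^*(X,X^{S-T})\otimes D^T
\]
furnished by Theorem~\ref{t:Gmodule} is finitely generated over $\zz G$; as $\cs^{(p)}$ is finite, it is enough to treat one summand $H^*(X,X^{S-T})\otimes_\zz D^T$.

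Next I would use that $G$ acts on $\cu(\Phi,X)$ through its action on $\Phi$, fixing the model-chamber coordinate, so that in the cochain identifications of Section~\ref{s:cohomology} the $G$-action touches only the coefficients $\ci(A)$. Thus $H^*(X,X^{S-T})$ carries the trivial $G$-action, and it is a finitely generated abelian group since $X$ is a finite complex; writing it as a finite direct sum of cyclic groups exhibits $H^*(X,X^{S-T})\otimes_\zz D^T$ as a finite direct sum of copies and quotients of $D^T=A^T/A^\ut$. Hence it suffices to prove that $A^T$ is finitely generated over $\zz G$.

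Finally, since $T$ is spherical and $\Phi$ has finite thickness, every $T$-residue is finite, so restricting functions gives a $G$-equivariant isomorphism of $A^T$ with the free abelian group $\zz[\car_T]$ on the set $\car_T$ of $T$-residues. The quotient map $\Phi\to\car_T$ is $G$-equivariant and onto, so the finitely many $G$-orbits on $\Phi$ cover finitely many $G$-orbits on $\car_T$; the indicator functions of a set of orbit representatives in $\car_T$ then generate $A^T$ over $\zz G$. Thus $A^T$, and therefore its quotient $D^T$, is finitely generated over $\zz G$, and retracing the reductions proves the corollary.

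The only step that is not a one-line invocation of a standard fact is the reduction to $A^T$: it is precisely the possible non-Noetherianity of $\zz G$ that forces the argument through the explicit filtration of Theorem~\ref{t:Gmodule} rather than through the finitely generated cochain groups, while the identification $A^T\cong\zz[\car_T]$ together with the orbit count along $\Phi\to\car_T$ is what makes the graded pieces visibly finitely generated.
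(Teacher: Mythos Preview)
Your proof is correct and follows essentially the same route as the paper: both argue via the filtration of Theorem~\ref{t:Gmodule}, reduce to finite generation of each $D^T$, and deduce this from the hypothesis of finitely many $G$-orbits on $\Phi$ (hence on $T$-residues), so that each $A^T$ is a finitely generated $\zz G$-module. The paper's proof is a single sentence recording exactly this; your version spells out the intermediate reductions and, usefully, makes explicit why one must pass through the filtration rather than argue directly on the finitely generated cochain groups (the possible non-Noetherianity of $\zz G$), a point the paper leaves implicit.
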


\begin{proof}
The assumption that $G$ has only finitely many orbits of chambers implies that each $A^T$ is finitely generated $G$-module; hence, so is each $D^T$. 
\end{proof}

\begin{example}
Suppose $G$ is chamber transitive, i.e., suppose $\Phi$ consists of a single $G$-orbit.  Choose a base chamber $\gf_0$, let $B$ be its stabilizer and for each $T\in \cs$, let $G_T$ be the stabilzer of the $T$-residue containing $\gf_0$.  Then $A$ is the $G$-module $\zz (G/B)$ of finitely supported functions on $G/B$ and $A^T$ can be identified with $\zz(G/G_T)$.  For each $U\supset T$, we have a natural inclusion $\zz(G/G_U)\to \zz(G/G_T)$  induced by the projection $G/G_T\to G/G_U$ and $D^T$ is the $G$-module formed by dividing out the sum of the images of  $\zz(G/G_U)$ in $\zz(G/G_T)$ for all $U\supset T$.
\end{example}

Here are some further corollaries to Corollary~\ref{c:main} and Theorem~\ref{t:Gmodule}.

\begin{corollary}\label{c:dim}
Suppose $\gG\subseteq G$ is a cocompact lattice in $G$.  Then $H^*(\gG;\zz \gG)$ has, in filtration degree $p$, associated graded $\gG$-module
\[
\bigoplus _{T\in \cs^{(p)}} H^*(K,K^{S-T})\otimes D^T.
\]
In particular, when $\gG$ is torsion-free, its cohomological dimension is given by 
\[
\cohd (\gG)=\max\{k\mid H^k(K,K^{S-T})\neq 0, \text{for some $T\in \cs$}\}.
\]
\end{corollary}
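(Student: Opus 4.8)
The plan is to reduce Corollary~\ref{c:dim} to Theorem~\ref{t:Gmodule} (applied with $X=K$) together with the standard dictionary between compactly supported cohomology and group‑ring cohomology. First I would check that a cocompact lattice $\gG\subseteq G$ inherits the good features of the $G$‑action: since $G$ has finitely many orbits on $\Phi$ and $\gG$ is discrete, $\gG$ also has finitely many orbits on $\Phi$ (the relevant double coset space is both compact and discrete, hence finite), and since chamber stabilizers in $\Aut(\Phi)$ are compact — the building being locally finite — $\gG$ meets each of them in a finite subgroup. Thus $\gG$ acts properly and cocompactly on $\cu(\Phi,K)$, which is contractible (by \cite{dbuild}) and locally finite (since $\Phi$ has finite thickness). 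By the fact recalled in the introduction (see \cite{g}), this yields a $\gG$‑equivariant isomorphism $H^*(\gG;\zz\gG)\cong H^*_c(\cu(\Phi,K))$. Feeding this into Theorem~\ref{t:Gmodule}, applied with $X=K$ (note $K=K^f$) and with $G$ replaced by $\gG$, transports the filtration of $H^*_c(\cu(\Phi,K))$ to a filtration of $H^*(\gG;\zz\gG)$ by right $\gG$‑submodules whose associated graded term in degree $p$ is $\bigoplus_{T\in\cs^{(p)}}H^*(K,K^{S-T})\otimes D^T$; this is the first assertion.

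For the formula for $\cohd\gG$, suppose $\gG$ is torsion‑free. Then $\gG$ acts freely and cocompactly on the contractible complex $\cu(\Phi,K)$, so $\cu(\Phi,K)/\gG$ is a finite aspherical complex with fundamental group $\gG$; in particular $\gG$ is of type $\mathrm{FP}$ and $\cohd\gG<\infty$. For a group of type $\mathrm{FP}$ one has $\cohd\gG=\max\{k\mid H^k(\gG;\zz\gG)\neq0\}$. By Corollary~\ref{c:main} (equivalently, by the filtration just produced) together with the fact that each $\ha^T$ is free abelian, $H^k(\gG;\zz\gG)\cong\bigoplus_{T\in\cs}H^k(K,K^{S-T})\otimes\ha^T$ is nonzero exactly when $H^k(K,K^{S-T})\neq0$ and $\ha^T\neq0$ for some $T\in\cs$. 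So the proof comes down to showing that $\ha^T=D^T$ is nonzero for every $T\in\cs$.

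This last point is the only place where something must be proved rather than assembled, and it is the main obstacle. For $\Phi=W$ it is classical: by the remark in Section~\ref{s:A} (cf.~\cite{dj}), $D^T$ has a basis indexed by $\{w\in W\mid\In(w)=T\}$, which contains the longest element $w_T$ of $W_T$. For a general $\Phi$ I would reduce to this case via an apartment. Fix an apartment $\gS$ of $\Phi$, which as a chamber system is isomorphic to the thin building $W$ of Example~\ref{ex:thin}; restriction of functions $f\mapsto f|_\gS$ carries $A(\Phi)^U$ into $A(\gS)^U$ for every $U$, hence descends to maps $D^T(\Phi)\to D^T(\gS)=D^T(W)$. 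Using the standard fact that $\gS$ meets every residue of $\Phi$ either in the empty set or in a single residue of $\gS$ of the same type, one sees this restriction is already surjective at the level of $A^T$ — hence on $D^T$: given $f\in A(\gS)^T$, the function on $\Phi$ that is constant with value $f|_{\gS\cap R}$ on each $T$‑residue $R$ of $\Phi$ with $\gS\cap R\neq\emptyset$ and zero elsewhere lies in $A(\Phi)^T$ and restricts to $f$. Therefore $D^T(\Phi)$ surjects onto $D^T(W)\neq0$, so $D^T(\Phi)\neq0$, completing the proof. The homological input (Theorem~\ref{t:Gmodule}, Corollary~\ref{c:main}) does everything else; what cannot be avoided is ruling out a vanishing $\ha^T$, and for an arbitrary building the apartment‑restriction argument seems to be the cleanest way to do so.
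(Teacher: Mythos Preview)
Your proof is correct and follows the route the paper intends: the corollary is stated there without proof, as a direct consequence of Theorem~\ref{t:Gmodule} (with $X=K$) and Corollary~\ref{c:main}, together with the standard identification $H^*(\gG;\zz\gG)\cong H^*_c(\cu(\Phi,K))$ for a group acting properly and cocompactly on a locally finite contractible complex. You have correctly isolated the one point that is not purely formal and that the paper leaves unaddressed: the formula for $\cohd(\gG)$ requires $D^T\neq 0$ for every $T\in\cs$, and your apartment-restriction argument---showing that restriction to an apartment $\gS\cong W$ induces a surjection $D^T(\Phi)\twoheadrightarrow D^T(W)$, the latter being nonzero by the remark in Section~\ref{s:A}---is a clean and correct way to establish this.
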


A torsion-free group $\gG$ is an \emph{$n$-dimensional duality group} if $H^*(\gG;\zz\gG)$ is free abelian and concentrated in dimension $n$.
Following \cite[Definition 6.1]{dm},  we say that the nerve of a Coxeter system has \emph{punctured homology concentrated in dimension $n$} if for all $T\in \cs$, $\wt{H}^*(K^{S-T})$ is free abelian and concentrated in dimension $n$. Corollary~\ref{c:main}  gives us a (correct) proof of the following result, stated in \cite[Theorem 6.3]{dm}.

\begin{corollary}\label{c:duality}
Suppose $\gG\subseteq G$ is a torsion-free, cocompact lattice in $G$. Then the following are equivalent.
\begin{enumerate1}
\item
$\gG$ is an $n$-dimensional duality group.
\item
$W$ is an $n$-dimensional virtual duality group.
\item
The nerve of $(W,S)$ has punctured homology concentrated in dimension $n-1$.
\end{enumerate1}
\end{corollary}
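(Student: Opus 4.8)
The plan is to derive Corollary~\ref{c:duality} as a formal consequence of Corollary~\ref{c:main} together with the relevant facts about $H^*_c$ of cocompact actions recalled in the introduction. First I would establish the equivalence of (1) and (3). Since $\gG$ is torsion-free and acts properly and cocompactly on the locally finite contractible complex $\cu(\Phi,K)$, we have $H^*(\gG;\zz\gG)\cong H^*_c(\cu(\Phi,K))$, which by Corollary~\ref{c:main} is $\bigoplus_{T\in\cs} H^*(K,K^{S-T})\otimes\ha^T$. By Proposition~\ref{p:sigma}, each $\ha^T$ is a free abelian group (being a direct summand of the free abelian group $A^T$), and each $\ha^T$ is nonzero since it is isomorphic to $D^T$, which is nonzero at least for $T=\emptyset$ (indeed $D^\emptyset = \ch^n(\gD)\neq 0$ by Theorem~\ref{t:gd}); more carefully, one checks $D^T\neq 0$ for every $T\in\cs$ because $\ch^m(\gD_T)=D^T$ and this group is the top cohomology of an $SI$-space of the correct dimension, hence nontrivial (alternatively, every spherical $T$ contributes by the existence of chambers $w$ with $\In(w)=T$). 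Therefore $H^*(\gG;\zz\gG)$ is free abelian and concentrated in degree $n$ if and only if, for every $T\in\cs$, the group $H^*(K,K^{S-T})$ is free abelian and concentrated in degree $n$. Finally, the long exact sequence of the pair $(K,K^{S-T})$, together with the contractibility of $K$, identifies $\wt H^*(K^{S-T})$ with $H^{*+1}(K,K^{S-T})$ up to a degree shift; hence $H^*(K,K^{S-T})$ is free abelian concentrated in degree $n$ for all $T$ precisely when $\wt H^*(K^{S-T})$ is free abelian concentrated in degree $n-1$ for all $T$, which is the definition of the nerve having punctured homology concentrated in dimension $n-1$. This gives $(1)\Leftrightarrow(3)$.

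Next I would handle the equivalence with (2). By definition, $W$ is an $n$-dimensional virtual duality group if some (equivalently, every) torsion-free finite-index subgroup of $W$ is an $n$-dimensional duality group. Specializing the building to $\Phi = W$ (Example~\ref{ex:thin}) and taking $\gG_0\subseteq W$ torsion-free of finite index, the argument of the previous paragraph applies verbatim with $\Phi=W$: $H^*(\gG_0;\zz\gG_0)\cong H^*_c(\cu(W,K))\cong\bigoplus_{T\in\cs} H^*(K,K^{S-T})\otimes\ha^T$, so $\gG_0$ is an $n$-dimensional duality group if and only if (3) holds. Thus $(2)\Leftrightarrow(3)$, and the chain of equivalences closes. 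Note this step also shows the condition (3) depends only on $(W,S)$, not on $\Phi$ or on $\gG$, which is what makes the three-way equivalence meaningful.

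The one genuinely delicate point — and the place I expect to spend the most care — is the claim that every $\ha^T$ with $T\in\cs$ is nonzero, since if some $\ha^T$ vanished one could have $H^*(K,K^{S-T})$ concentrated in the wrong degree (or nonfree) without affecting $H^*(\gG;\zz\gG)$, breaking the implication $(1)\Rightarrow(3)$. To settle this I would invoke $D^T\cong\ch^m(\gD_T)$ from Proposition~\ref{p:sigma} and observe that $\gD_T^f$ (the relevant chamber for the $T$-face) realizes to an $SI$-space of dimension $m=n-|T|$ via Theorem~\ref{t:SI} applied inside the appropriate residue, so $\ch^m(\gD_T)$ is a \emph{nonzero} free abelian group; equivalently, in the thin case, the coset representatives $w$ with $\In(w)=T$ are nonempty since, e.g., the longest element of $W_T$ times an element realizing any given $\In$-set outside $T$ produces such a $w$. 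Either route gives $\ha^T\neq 0$ for all $T\in\cs$. With that in hand, everything else is routine bookkeeping with exact sequences and the universal coefficient / freeness observations already contained in Section~\ref{s:A}.
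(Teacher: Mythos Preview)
Your approach is exactly what the paper intends: it offers no explicit proof of Corollary~\ref{c:duality}, simply declaring it a consequence of Corollary~\ref{c:main}, and your derivation via $H^*(\gG;\zz\gG)\cong H^*_c(\cu(\Phi,K))$, the decomposition of Corollary~\ref{c:main}, and the long exact sequence of $(K,K^{S-T})$ is the right way to unpack that. You also correctly isolate the one nontrivial point the paper glosses over, namely that $\ha^T\neq 0$ for every $T\in\cs$, without which $(1)\Rightarrow(3)$ could fail.

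However, your two proposed justifications for $\ha^T\neq 0$ do not quite close the gap for a general (thick) building. The $SI$ property, as defined in Section~\ref{s:morse}, says only that $H^*_c$ is free abelian and concentrated in the top degree; it does \emph{not} assert nonvanishing, so ``$SI$ implies $\ch^m(\gD_T)\neq 0$'' is not a valid inference. Your second route, via $\In(w_T)=T$, is correct but is an argument about $D^T(W)$, i.e., about the thin building $\Phi=W$; you still need to transport this to arbitrary $\Phi$. The clean fix is to restrict to an apartment: if $\gS\cong W$ is an apartment of $\Phi$, then restriction of functions gives, for every $U\subseteq S$, a map $A^U(\Phi)\to A^U(W)$ (a $U$-residue of $\Phi$ meets $\gS$ in at most one $U$-residue of $\gS$, so constancy is preserved), and this map is surjective because the indicator of the $U$-residue of $\Phi$ containing a given $wW_U$ restricts to $\mathbf 1_{wW_U}$. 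Hence restriction induces a surjection $D^T(\Phi)\twoheadrightarrow D^T(W)$, and since $D^T(W)\neq 0$ by your $\In$-argument, $D^T(\Phi)\neq 0$ as well. With this patch in place your proof is complete and matches the paper's intended argument.
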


\end{document}